\newtheorem{theorem}{Theorem}[section]
\newtheorem{lemma}[theorem]{Lemma}
\newtheorem{corollary}[theorem]{Corollary}
\newtheorem{proposition}[theorem]{Proposition}
\newcommand{\cB}{{\mathcal B}}
\newcommand{\cF}{{\mathcal F}}
\newcommand{\cG}{{\mathcal G}}
\newcommand{\cH}{{\mathcal H}}
\newcommand{\cP}{{\mathcal P}}
\newcommand{\cX}{{\mathcal X}}
\newcommand{\te}{{\theta}}
\newcommand{\Om}{{\Omega}}
\newcommand{\ve}{{\varepsilon}}
\newcommand{\del}{{\delta}}
\newcommand{\Gam}{{\Gamma}}
\newcommand{\sig}{{\sigma}}
\newcommand{\al}{{\alpha}}
\newcommand{\be}{{\beta}}
\newcommand{\ka}{{\kappa}}
\newcommand{\la}{{\lambda}}
\newcommand{\Up}{{\Upsilon}}
\newcommand{\vp}{{\varpi}}
\newcommand{\io}{{\iota}}
\newcommand{\vs}{{\varsigma}}
\newcommand{\bbR}{{\mathbb R}}
\newcommand{\bbZ}{{\mathbb Z}}
\begin{document}
\begin{frontmatter}

\title{Nonconventional limit theorems in discrete and continuous time
via martingales}
\runtitle{Nonconventional limit theorems}

\begin{aug}
\author[A]{\fnms{Yuri} \snm{Kifer}\corref{}\thanksref{t1}\ead[label=e1]{kifer@math.huji.ac.il}}
\and
\author[B]{\fnms{S. R. S.} \snm{Varadhan}\thanksref{t2}\ead[label=e2]{varadhan@cims.nyu.edu}}
\runauthor{Y. Kifer and S. R. S. Varadhan}
\affiliation{Hebrew University and New York University}
\address[A]{Institute of Mathematics\\
Hebrew University\\
Jerusalem 91904\\
Israel\\
\printead{e1}} 
\address[B]{Courant Institute\\
\quad for Mathematical Studies\\
New York University\\
251 Mercer St\\
New York, New York 10012\\
USA\\
\printead{e2}}
\end{aug}

\thankstext{t1}{Supported in part by ISF Grants 130/06 and 82/10.}

\thankstext{t2}{Supported in part by NSF Grants OISE 0730136 and DMS-09-04701.}

\received{\smonth{1} \syear{2012}}
\revised{\smonth{6} \syear{2012}}

%
\begin{abstract}
We obtain functional central limit theorems for both discrete time
expressions of the form $1/\sqrt{N}\sum_{n=1}^{[Nt]} (F
(X(q_1(n)),\ldots,X(q_\ell (n)) )- \bar F )$ and similar expressions in
the continuous time where the sum is replaced by an integral. Here
$X(n),n\geq0$ is a sufficiently fast mixing vector process with some
moment conditions and stationarity properties, $F$ is a continuous
function with polynomial growth and certain regularity properties,
$\bar F=\int F\,d(\mu\times\cdots\times\mu)$, $\mu$ is the distribution
of $X(0)$ and $q_i(n)=in$ for $i\le k\leq\ell$ while for $i>k$ they are
positive functions taking on integer values on integers with some
growth conditions which are satisfied, for instance, when $q_i$'s are
polynomials of increasing degrees. These results decisively generalize
[\textit{Probab. Theory Related Fields} \textbf{148} (2010) 71--106],
whose method was only applicable to the case $k=2$ under substantially
more restrictive moment and mixing conditions and which could not be
extended to convergence of processes and to the corresponding
continuous time case. As in [\textit{Probab. Theory Related Fields}
\textbf{148} (2010) 71--106], our results hold true when
$X_i(n)=T^nf_i$, where $T$ is a mixing subshift of finite type, a
hyperbolic diffeomorphism or an expanding transformation taken with a
Gibbs invariant measure, as well as in the case when
$X_i(n)=f_i(\Up_n)$, where $\Up_n$ is a Markov chain satisfying the
Doeblin condition considered as a stationary process with respect to
its invariant measure. Moreover, our relaxed mixing conditions yield
applications to other types of dynamical systems and Markov processes,
for instance, where a spectral gap can be established. The continuous
time version holds true when, for instance, $X_i(t)=f_i(\xi_t)$, where
$\xi_t$ is a nondegenerate continuous time Markov chain with a finite
state space or a nondegenerate diffusion on a compact manifold. A
partial motivation for such limit theorems is due to a series of papers
dealing with nonconventional ergodic averages.
\end{abstract}

%
\begin{keyword}[class=AMS]
\kwd[Primary ]{60F17}
\kwd[; secondary ]{60G42}
\kwd{37D99}
\kwd{60G15}
\end{keyword}
\begin{keyword}
\kwd{Limit theorems}
\kwd{martingale approximation}
\kwd{mixing}
\kwd{Markov processes}
\kwd{hyperbolic diffeomorphisms}
\end{keyword}

\end{frontmatter}

\section{Introduction}\label{sec1}
Nonconventional ergodic theorems, known also after \cite{Be} as polynomial
ergodic theorems, studied the limits of expressions having the form
(cf. \cite{Fur}) $1/N\sum_{n=1}^NT^{q_1(n)}f_1\cdots
T^{q_\ell(n)}f_\ell$, where $T$ is a weakly mixing measure
preserving transformation, $f_i$'s are bounded measurable functions
and $q_i$'s are polynomials taking on integer values on the integers.
Originally, these results were motivated by applications to multiple recurrence
for dynamical systems, the functions $f_i$ being indicators of some measurable
sets.

After an ergodic theorem (or in the probabilistic language: the law of large
numbers) is established, it is natural to inquire whether a corresponding
central limit theorem holds true as well, though as usual under stronger
conditions.
In this paper we prove the functional central limit theorem (invariance
principle) for expressions of the form
%
\begin{equation}
\label{11} \frac{1}{\sqrt{N}}\sum_{n=1}^{[Nt]}
\bigl(F \bigl(X\bigl(q_1(n)\bigr),\ldots, X\bigl(q_\ell(n)
\bigr) \bigr) -\bar F \bigr)
\end{equation}
and for the corresponding continuous time expressions of the form
%
\begin{equation}
\label{12} \frac{1}{\sqrt N}\int_0^{[Nt]}
\bigl(F \bigl( X\bigl(q_1(t)\bigr),\ldots, X\bigl(q_\ell (t)
\bigr) \bigr)- \bar F \bigr)\,dt,
\end{equation}
where $\{X(n),n\geq0\}$, [or $\{X(t)\}, t\ge0$] is a sufficiently
fast mixing vector valued process with some stationarity properties satisfying certain
moment conditions, $F$~is a continuous function with polynomial growth and certain regularity
properties, $\bar F=\int F \,d(\mu\times\cdots\times\mu)$ where
$\mu$ is the common distribution of $X(n)$, $\{q_j(t)\}$ are positive functions
taking on integer values on integers in the discrete time case with $q_j(t)=jt$
for $j\le k$ and for $j>k$ they
satisfy certain growth conditions.
For instance, it would be enough if
$\{q_j (t)\}$ are polynomials of increasing degrees, though we actually
do not
need any polynomial structure of functions $q_j,  j>k$ which was
crucial in
papers dealing with nonconventional ergodic theorems cited above.

Our methods rely on a martingale approximations approach which has
played a decisive role in
most proofs of the central limit theorem during the last 50 years. In
view of
strong dependence on the future of summands in (\ref{11}),
application of
martingales in our setup does not seem plausible on first sight. It
turns out, somewhat surprisingly, that an appropriately modified martingale
approach still works well in our situation if we construct the filtration
of $\sig$-algebras so that in some sense ``future becomes present.'' Once
martingale approximations are constructed, it remains only to check
convergence of covariances which we do in Section \ref{sec4}, while
the whole
approach is explained and completed in Section \ref{sec5}.

Unlike
the classical situation, our functional central limit theorem yields a process
which has Gaussian distributions but not necessarily independent increments
and we demonstrate an explicit example of such limiting process with dependent
increments. This interesting effect rarely appears in natural models.
We obtain also a functional central limit theorem
in the corresponding continuous time case which only recently was treated
in the sense of nonconventional ergodic theorems (see~\cite{BLM}). It turns
out that the limiting process in the continuous time case has a somewhat
different structure than in the discrete time setup.
These results generalize \cite{Ki2}, where the partition
into blocks and the direct use of characteristic functions showed applicability
only to the case $k=2$ under more restrictive conditions and neither the
functional central limit theorem nor the continuous time case could be dealt
with by the method employed there.

Our results can be applied to large classes of stochastic processes
$X(n),
n\geq0$, in particular, to functions of Markov chains satisfying Doeblin's
condition or to those which are constructed from sufficiently fast mixing
dynamical systems.
The continuous time version holds true, in particular, when
$X(t)$ is a function of an irreducible continuous time Markov
chain or of a nondegenerate diffusion on a compact manifold or of
Ornstein--Uhlenbeck type processes.

\section{Preliminaries and main results}\label{sec2}

Our discrete time setup consists of a $\wp$-dimensional stochastic process
$\{X(n),  n=0,1,\ldots\}$ on a probability space $(\Om,\cF,P)$ and of a
family of
$\sig$-algebras $\cF_{kl}\subset\cF,  -\infty\leq k\leq
l\leq\infty$ such that $\cF_{kl}\subset\cF_{k'l'}$ if $k'\leq k$ and
$l'\geq l$. It is often convenient to measure the dependence between
two sub-$\sig$-algebras $\cG,\cH\subset\cF$ via the quantities
%
\begin{eqnarray}
\label{eq21}
&&\varpi_{q,p}(\cG,\cH)\nonumber\\[-8pt]\\[-8pt]
&&\qquad=\sup\bigl\{\bigl\| E [g|\cG ]-E[g]
\bigr\|_p\dvtx   g   \mbox{ is } \cH\mbox{-measurable and }
\|g\|_q\leq1\bigr\},\nonumber
\end{eqnarray}
where the supremum is taken over real functions and $\|\cdot\|_r$ is the
$L^r(\Om,\cF,P)$-norm. Then more familiar $\al,\rho,\phi$ and
$\psi$-mixing
(dependence) coefficients can be expressed via the formulas (see \cite{Bra},
Chapter 4)
\begin{eqnarray*}
\al(\cG,\cH)&=&\tfrac14\varpi_{\infty,1}(\cG,\cH), \qquad \rho(\cG,\cH)=
\varpi_{2,2} (\cG,\cH),
\\
\phi(\cG,\cH)&=&\tfrac12\varpi_{\infty,\infty}(\cG,\cH)
\quad\mbox{and}\quad   \psi(
\cG,\cH)=\varpi_{1,\infty}(\cG,\cH).
\end{eqnarray*}
We set also
%
\begin{equation}
\label{eq22} \varpi_{q,p}(n)=\sup_{k\geq0}\varpi_{q,p}(
\cF_{-\infty,k},\cF_{k+n,\infty})
\end{equation}
and, accordingly,
\begin{eqnarray*}
\al(n)&=&\tfrac{1}{4}\varpi_{\infty,1}(n),\qquad \rho(n)=
\varpi_{2,2}(n),\\
\phi(n)&=&\tfrac12\varpi_{\infty,\infty}(n), \qquad \psi(n)=
\varpi_{1,\infty}(n).
\end{eqnarray*}
We will impose mixing rates, that is, rates of decay of $\varpi_{q,p}(n)$
requiring that
%
\begin{equation}
\label{eq23} C(q,p)=\sum_{n\geq1}
\varpi_{q,p}(n)
\end{equation}
is finite for some choices of $p$ and $q$.
Our setup includes also conditions on the approximation rate
%
\begin{equation}
\label{20} \beta(p,r)=\sup_{k\geq0}\bigl\|X(k)-E \bigl[X(k)|
\cF_{k-r,k+r} \bigr]\bigr\|_p.
\end{equation}
In what follows we can always extend the definitions of $\cF_{kl}$
given only for $k,l\geq0$ to negative $k$ by defining $\cF_{kl}=\cF_{0l}$
for $k<0$ and $l\geq0$. Furthermore, we do not require stationarity of the
process $X(n), n\geq0$, assuming only that the distribution of $X(n)$
does not
depend on $n$ and the joint distribution of $\{X(n), X(n')\}$ depends
only on
$n-n'$ which we write for further references by
%
\begin{equation}
\label{21} X(n)\stackrel{d} {\sim}\mu
\quad\mbox{and}\quad\bigl(X(n),X
\bigl(n'\bigr)\bigr)\stackrel{d} {\sim}\mu_{n-n'}
\qquad\mbox{for all }  n,n',
\end{equation}
where $Y\stackrel{d}{\sim}\mu$ means that $Y$ has $\mu$ for its
distribution.

Next, let $F= F(x_1,\ldots,x_\ell),  x_j\in\bbR^{\wp}$ be a function on
$\bbR^{\wp\ell}$ such that for some $\iota,K>0,\ka\in(0,1]$ and all
$x_i,y_i\in\bbR^{\wp}, i=1,\ldots,\ell$, we have
%
\begin{eqnarray}
\label{24}
&&\bigl|F(x_1,\ldots,x_\ell)-F(y_1,\ldots,y_\ell)\bigr|\nonumber\\[-8pt]\\[-8pt]
&&\qquad\leq K \Biggl[1+\sum^\ell_{j=1}|x_j|^\iota+
\sum^\ell_{j=1} |y_j|^\iota
\Biggr]\sum^\ell_{j=1}|x_j-y_j|^\ka\nonumber
\end{eqnarray}
and
%
\begin{equation}
\label{24b} \bigl|F(x_1,\ldots,x_\ell)\bigr|\leq K \Biggl[ 1+\sum
^\ell_{j=1}|x_j|^{\iota}
\Biggr].
\end{equation}

To simplify formulas, we assume a centering condition
%
\begin{equation}
\label{24c} \bar F=\int F(x_1,\ldots,x_\ell) \,d
\mu(x_1)\cdots d\mu(x_\ell)=0,
\end{equation}
which is not really a restriction since we can always replace $F$ by
$F-\bar F$. Our goal is to prove a functional central limit theorem for
%
\begin{equation}
\label{24d} \xi_N(t)=\frac{1}{\sqrt N}\sum
_{n=1}^{[Nt]}F \bigl(X\bigl(q_1(n)
\bigr),\ldots,X\bigl(q_\ell(n)\bigr) \bigr)
\quad\mbox{and}\quad t\in[0,T],
\end{equation}
where $q_1(n)< q_2(n) <\cdots< q_\ell(n)$ are increasing functions taking
on integer values on integers and such that for $j\leq k$, $q_j(n)=jn$,
whereas the remaining ones grow faster in $n$. We assume that
for $k+1\le i \le\ell$,
%
\begin{equation}
\label{2q1} \lim_{n\to\infty}\bigl(q_i(n+1)-q_i(n)
\bigr)=\infty
\end{equation}
and for $i\ge k$ and any $\epsilon> 0 $,
%
\begin{equation}
\label{2q2} \liminf_{n\to\infty}\bigl(q_{i+1}(\epsilon
n)-q_i(n)\bigr)>0,
\end{equation}
which implies because of (\ref{2q1}) that
%
\begin{equation}
\label{2q3} \lim_{n\to\infty}\bigl(q_{i+1}(\epsilon
n)-q_i(n)\bigr)=\infty.
\end{equation}
To shorten some of the arguments, we assumed that $q_i(n)$ is
increasing in both
$n$ and $i$ but, in fact, (\ref{2q1}) and (\ref{2q2}) imply already that
this holds true for all $n$ large enough, which suffices for our purposes.
For each $\theta>0$ set
%
\begin{equation}
\label{2m1} \gamma_\theta^\theta= \|X\|_\theta^\theta=
E\bigl|X(n)\bigr|^\theta = \int|x|^\theta \,d\mu.
\end{equation}
Our main result relies on the following.
%
\begin{assumption}\label{Hyp2} With $d=(\ell-1)\wp$ there exist
$\infty>
p,q\ge1$ and $\delta,m >0$ with $ \delta< \ka-\frac dp$ satisfying
%
\begin{eqnarray}
\label{eqsix1} &\displaystyle \sum_{n=0}^\infty
\varpi_{q,p}(n)=\te(p,q)<\infty,&
\\
%
\label{beta1}
&\displaystyle \sum_{r=0}^\infty\bigl[
\beta(q, r)\bigr]^\delta<\infty,&
\\
%
\label{eqsix5}
&\displaystyle \gamma_{m}<\infty,\qquad \gamma_{2q\iota}<\infty
\qquad\mbox{with } \frac{1}{2}\ge\frac{1}{p}+\frac{\iota+2}{m}+
\frac{\delta}{q}.&
\end{eqnarray}
\end{assumption}
%
\begin{remark}\label{simple}
The reader willing to reduce technicalities in the first reading can be advised
to keep in mind simplified assumptions such as $\ell=k$ [i.e., to
consider only
linear times $q_j(n)=jn$], bounded and Lipschitz continuous $F$ and
$\varpi_{q,p}(n), \be(q,n)$ decaying exponentially fast in $n$. Such
simplifications save some of our estimates but, otherwise, most of our
machinery still should be applied.
\end{remark}

In order to give a detailed statement of our main result as well as for its
proof, it will be essential to represent the function $F=
F(x_1,x_2,\ldots,
x_\ell)$ in the form
%
\begin{equation}
\label{F} F=F_1(x_1)+\cdots+F_\ell(x_1,
x_2,\ldots, x_\ell),
\end{equation}
where for $i<\ell$,
%
\begin{eqnarray}
\label{18}
F_i(x_1,\ldots, x_i)&=&\int
F(x_1,x_2,\ldots, x_\ell) \,d\mu
(x_{i+1})\cdots d\mu(x_\ell)
\nonumber\\[-8pt]\\[-8pt]
&&{} -\int F(x_1,x_2,\ldots, x_\ell)  \,d
\mu(x_i)\cdots d\mu (x_\ell)
\nonumber
\end{eqnarray}
and
\[
F_\ell(x_1,x_2,\ldots, x_\ell)=F(x_1,x_2,\ldots, x_\ell) -\int F(x_1,x_2,\ldots,
x_\ell)  \,d\mu(x_\ell),
\]
which ensures, in particular, that
%
\begin{equation}
\label{F0} \int F_i(x_1, x_2,\ldots,x_{i-1}, x_i) \,d\mu(x_i)\equiv0
\qquad\forall x_1, x_2,\ldots, x_{i-1}.
\end{equation}
These enable us to write
%
\begin{equation}
\label{xi} \xi_N(t)=\sum_{i=1}^k
\xi_{i,N}(it)+\sum_{i=k+1}^\ell
\xi_{i,N}(t),
\end{equation}
where for $1\leq i\leq k$,
%
\begin{equation}
\label{def21} \xi_{i,N}(t)=\frac{1}{\sqrt N}\sum
_{n=1}^{[{Nt}/{i}]} F_i\bigl(X(n), X(2n),\ldots,
X(in)\bigr)
\end{equation}
and for $i\ge k+1$,
%
\begin{equation}
\label{def22} \xi_{i,N}(t)=\frac{1}{\sqrt N}\sum
_{n=1}^{[Nt]} F_i\bigl(X
\bigl(q_1(n)\bigr),\ldots, X\bigl(q_i(n)\bigr)\bigr).
\end{equation}

\begin{theorem}\label{MainThm}
Suppose that Assumption \ref{Hyp2} holds true. Then the $\ell$-dimen\-sional
process $\{\xi_{i,N}(t)\dvtx 1\le i\le\ell\}$
converges in distribution as $N\to\infty$ to a Gaussian process
$\{\eta_i(t)\dvtx  1\le i\le\ell\}$ with stationary independent increments.
The means are $0$ and the covariances are given by $ E[ \eta_i(s)\eta_j(t)]
=\min(s,t) D_{i,j}$. For $i,j\leq k$,
$D_{i,j}$ is given by Proposition \ref{covariance}. Moreover,
$D_{i,j}=0$ if
$i\not=j$, and either $i$ or $j$ is at least $k+1$,
making the processes $\{\eta_i(\cdot),  i\geq k+1\}$ independent of each
other and of
$\{\eta_j(\cdot)\dvtx  j\le k\}$. For $i\ge k+1$, the variance of $\eta_i(t)$ is
given by $t D_{i,i}$, where
\[
D_{i,i}=\int\bigl|F_i(x_1,x_2,\ldots,x_i)\bigr|^2\,d
\mu(x_1)\,d\mu(x_2)\cdots d\mu(x_i).
\]
Finally, the distribution of the process $\xi_N(\cdot)$ converges to the
Gaussian
process $\xi(\cdot)$ which can be represented
in the form
%
\begin{equation}
\label{eqmain1} \xi(t)=\sum_{i=1}^k
\eta_i(it)+\sum_{i=k+1}^\ell
\eta_i(t).
\end{equation}
If $k\geq2$, then the process $\xi(t)$ may not have independent increments.
\end{theorem}

In order to understand our assumptions, observe that $\varpi_{q,p}$
is clearly nonincreasing in $q$ and nondecreasing in $p$. Hence,
for any pair $p,q\geq1$,
\[
\varpi_{q,p}(n)\leq\psi(n).\vadjust{\goodbreak}
\]
Furthermore, by the real version of the Riesz--Thorin interpolation
theorem or the Riesz convexity theorem (see \cite{Ga}, Section 9.3,
and \cite{DS}, Section VI.10.11), whenever $\theta\in[0,1],  1\leq
p_0,p_1,q_0,q_1\leq\infty$ and
\[
\frac1p=\frac{1-\theta}{p_0}+\frac\theta{p_1},\qquad  \frac1q=
\frac
{1-\theta}{q_0}+\frac\theta{q_1},
\]
then
%
\begin{equation}
\label{eq24} \varpi_{q,p}(n)\le2\bigl(\varpi_{q_0,p_0}(n)
\bigr)^{1-\theta} \bigl(\varpi_{q_1,p_1}(n)\bigr)^\theta.
\end{equation}
In particular, using the obvious bound $\varpi_{q_1,p_1}\leq2$
valid for any $q_1\geq p_1$, we obtain from (\ref{eq24}) for pairs
$(\infty,1)$, $(2,2)$ and $(\infty,\infty)$ that for all $q\geq
p\geq1$,
%
\begin{eqnarray}
\label{eqsix2} \varpi_{q,p}(n)&\le&\bigl(2\alpha(n)
\bigr)^{{1/p}-{1/q}},\nonumber\\
\varpi_{q,p}(n)&\le&2^{1+1/p-1/q}\bigl(\rho(n)
\bigr)^{1-1/p+1/q}
\quad\mbox{and}\\
\varpi_{q,p}(n)&\le&2^{1+1/p}\bigl(\phi(n)
\bigr)^{1-1/p}.
\nonumber
\end{eqnarray}
We observe also that by the H\"older inequality for $q\geq p\geq1$
and $\alpha\in(0,p/q)$,
%
\begin{equation}
\label{eq201} \beta(q,r)\le2^{1-\alpha} \bigl[\beta(p,r)
\bigr]^\alpha\gamma^{1-\al
}_{{pq(1-
\al)}/({p-q\al})}
\end{equation}
with $\gamma_\theta$ defined in (\ref{2m1}). Thus, we can formulate
Assumption \ref{Hyp2} in terms of more familiar $\alpha, \rho,
\phi$,
and $\psi$-mixing coefficients and with various moment conditions. It
follows also from (\ref{eq24}) that if $\varpi_{q,p}(n)\to0$ as
$n\to\infty$
for some $q>p\geq1$, then
%
\begin{equation}
\label{eq25} \varpi_{q,p}(n)\to0  \qquad\mbox{as } n\to\infty\mbox{ for
all } q> p\geq1,
\end{equation}
and so (\ref{eq25}) holds true under Assumption \ref{Hyp2}.

Concerning the function $F=F(x_1,\ldots,x_\ell)$, we can take it, for instance,
to be a polynomial in $x_1,\ldots,x_\ell$, in particular,
$F(x_1,\ldots,x_\ell)=
x_1x_2\cdots x_\ell$ which leads to a functional central limit theorem for
\[
N^{-1/2}\sum_{1\leq n\leq[Nt]}X\bigl(q_1(n)
\bigr)X\bigl(q_2(n)\bigr)\cdots X\bigl(q_\ell(n)\bigr).
\]

The key point of our proof will be construction of martingale approximations
for the processes $\xi_{i,N}(t)$'s, where we will have to overcome problems
imposed by strong dependencies between terms in the sum (\ref{24d}),
as well as between arguments $X(q_j(n)),  j=1,2,\ldots,\ell$, of the function
$F$ there. The
realignment in the definition of $\{\xi_{i,N} (t)\}$ for $i\le k$ will
also be
important since it makes the collection a process with independent increments
in the limit. Otherwise, in the limit, increments of $\{\xi_i(t)\}$
will be
correlated with the increments of $\{\xi_j(t)\}$ at different time
points. It
will not matter for $i\ge k+1$, for they will all turn out to be mutually
independent in the limit.

The conditions of Theorem \ref{MainThm} hold true for
many important models. Let, for instance, $\Up_n$ be a Markov chain on a
space $M$ satisfying the Doeblin condition (see, e.g.,
\cite{IL}, pages 367 and 368) and $f_j,  j=1,\ldots,\ell$, be bounded measurable
functions on the space of sequences $x=(x_i,  i=0,1,2,\ldots,  x_i\in
M)$ such
that $|f_j(x)-f_j(y)|\leq Ce^{-cn}$ provided $x=(x_i),  y=(y_i)$
and $x_i=y_i$ for all $i=0,1,\ldots,n$, where $c,C>0$ do not depend on
$n$ and $j$. In fact, some polynomial decay in $n$ will suffice here as well.
Let $X(n)=(X_1(n),\ldots,X_\ell(n))$ with $X_j(n)=
f_j(\Up_n,\Up_{n+1},\Up_{n+2},\ldots)$ and take $\sig$-algebras $\cF_{kl},
k<l$ generated by $\Up_k,\Up_{k+1},\ldots,\Up_l$, then our condition
will be satisfied considering $\{\Up_n,  n\geq0\}$ with its invariant
measure as a stationary process. In fact, our conditions hold true for a
more general class of processes, in particular, for Markov chains whose
transition operator has an $L^2$ spectral gap which leads to an exponentially
fast decay of the $\rho$-mixing coefficient.
%
\begin{remark}\label{rem25}
Formally, (\ref{21}) requires some stationarity and, for instance, if we
consider a Markov chain $\xi_n$ satisfying the Doeblin condition but
whose initial distribution differs from its invariant measure, then
(\ref{21}) does not hold true for $X(n)=f(\xi_n)$. Still, a slight
modification makes our method to work so that Theorem \ref{MainThm}
(as well as its continuous time version Theorem
\ref{ContThm}) remain valid. In order to do this, we consider another
probability measure $\Pi$ on the space $(\Om,\cF)$ and require the
weak stationarity (\ref{21}) with respect to $\Pi$, that is,
$X(n)\Pi=
\mu$ and $(X(n),X(n'))\Pi=\mu_{n-n'}$. In addition, we modify the
definition of the dependence coefficient $\varpi_{q,p}$ in (\ref{eq21}),
taking the\vspace*{1pt} conditional expectation of $g$ there with respect to the
probability $P$ while taking the unconditional expectation of $g$ with
respect to $\Pi$. It is easy to see that under the same assumptions
as above but with modified (\ref{eq21}) and (\ref{21}) our proof
will still go through.
\end{remark}

Important classes of processes satisfying our conditions come from
dynamical systems. Let $T$ be a $C^2$ Axiom A diffeomorphism (in
particular, Anosov) in a neighborhood of an attractor or let $T$ be
an expanding $C^2$ endomorphism of a Riemannian manifold $M$ (see
\cite{Bo}), $f_j$'s be either H\"older continuous functions or functions
which are constant on elements of a Markov partition and let
$X(n)=(X_1(n),\ldots,X_\ell(n))$ with $X_j(n)=f_j(T^nx)$. Here the probability
space is $(M,\cB,\mu)$, where $\mu$ is a Gibbs invariant measure
corresponding
to some H\"older continuous function and $\cB$ is the Borel $\sig
$-field. Let
$\zeta$ be a finite Markov partition
for $T$, then we can take $\cF_{kl}$ to be the finite $\sig$-algebra
generated by the partition $\bigcap_{i=k}^lT^i\zeta$. In fact, we can
take here not only H\"older continuous $f_j$'s but also indicators
of sets from $\cF_{kl}$. A related example corresponds to $T$ being
a topologically mixing subshift of finite type, which means that $T$
is the left shift on a subspace $\Xi$ of the space of one-sided
sequences $\vs=(\vs_i,i\geq0), \vs_i=1,\ldots,l_0$, such that $\vs\in
\Xi$
if $\pi_{\vs_i\vs_{i+1}}=1$ for all $i\geq0$ where\vadjust{\goodbreak} $\Pi=(\pi_{ij})$
is an $l_0\times l_0$
matrix with $0$ and~$1$ entries and such that $\Pi^n$ for some $n$
is a matrix with positive entries. Again, we have to take in this
case $f_j$ to be bounded H\"older continuous [with respect to the
metric $d((\vs_i,i\geq0), (\vs^\prime_i,i\geq0))=\exp(-\min\{
j\geq0\dvtx
\vs_j\ne\vs^\prime_j\})$] functions on the
sequence space above, $\mu$ to be a Gibbs invariant measure
corresponding to some H\"older continuous function and to define
$\cF_{kl}$ as the finite $\sig$-algebra generated by cylinder sets
with fixed coordinates having numbers from $k$ to~$l$. The
exponentially fast $\psi$-mixing is well known in
the above cases (see~\cite{Bo}). Among other dynamical systems with
exponentially fast $\psi$-mixing we can mention also the Gauss map
$Tx=\{1/x\}$ (where $\{\cdot\}$ denotes the fractional part) of the
unit interval with respect to the Gauss measure $G$
(see \cite{IL} and~\cite{He}). The latter enables us to consider the number
$N_a(x,n)$, $a=(a_1,\ldots,a_\ell)$
of $m$'s between 0 and $n$ such that the $q_j(m)$th digit of the
continued fraction of $x$ equals certain integer $a_j,j=1,\ldots,\ell$. Then
Theorem \ref{MainThm} implies a central limit theorem for $N_a(x,n)$
considered
as a random variable on the probability space $((0,1],\cB, G)$. In
fact, our
results rely only on sufficiently fast
$\al$ or $\rho$-mixing which holds true for wider classes of
dynamical systems,
in particular, those whose transfer operator has an $L^2$ spectral gap
(such as many one-dimensional
not necessarily uniformly expanding maps) which ensures an
exponentially fast
$\rho$-mixing. Of course, there are many stationary processes (including
unbounded ones) and dynamical
systems with polynomially fast mixing which still satisfy our
conditions, but
they are more difficult to describe in short.

Next, we discuss a continuous time version of our theorem.
Our continuous time setup consists of a $\wp$-dimensional process
$X(t),  t\geq0$ on a probability space $(\Om,\cF,P)$ and of a
family of
$\sig$-algebras $\cF_{st}\subset\cF, -\infty\leq s\leq t\leq
\infty$ such
that $\cF_{st}\subset\cF_{s't'}$ if $s'\leq s$ and $t'\geq t$. We
assume that
the distribution of $X(t)$ is independent of $t$ and denote it by $\mu
$. The
joint distribution of $\{X(t), X(t+s)\}$ is assumed to depend only on
$s$ and
is denoted by $\mu_s$. For all $t\geq0$ we set
%
\begin{equation}
\label{varpi} \varpi_{q,p}(t)=\sup_{s\geq0}\varpi_{q,p}(
\cF_{-\infty,s},\cF_{s+t,\infty})
\end{equation}
and
%
\begin{equation}
\label{beta} \beta(p,t)=\sup_{s\geq0}\bigl\|X(s)-E \bigl[X(s)|
\cF_{s-t,s+t} \bigr]\bigr\|_p,
\end{equation}
where $\varpi_{q,p}(\cG,\cH)$ is defined by (\ref{eq21}). We
continue to
impose Assumption \ref{Hyp2} on the decay rates of
$\varpi_{q,p}(t)$ and $\beta(p,t)$. Although they only involve integer
values of $t$, it will suffice since they are nonincreasing functions of
$t$. Let
$q_1(t)< q_2(t) <\cdots< q_\ell(t)$ be increasing positive functions
such that $q_i(t)=i t$ for $i=1,\ldots,k$ while $q_i(t),  i>k$ grow faster
in $t$. We assume that these functions satisfy the conditions (\ref{2q2})
and (\ref{2q3}) (with $t$ in place of~$n$), while (\ref{2q1}) is replaced
by
%
\begin{equation}
\label{new2q} \lim_{t\to\infty}\bigl(q_i(t+{\gamma})-q_i(t)
\bigr)=\infty  \qquad\mbox{for any }  {\gamma}>0
\mbox{ and } i>k.
\end{equation}

\begin{theorem}\label{ContThm}
Suppose that Assumption \ref{Hyp2} holds true. Then the distribution
of the
process
%
\begin{equation}
\label{cont} \xi_N(t)=\frac1{\sqrt N}\int_0^{Nt}F
\bigl(X\bigl(q_1(s)\bigr),\ldots,X\bigl(q_\ell (s)\bigr)
\bigr)\,ds
\end{equation}
on $C[0,T]$ converges to the distribution of a Gaussian process
$\xi(t)$ which has the representation (\ref{eqmain1}), but, unlike in
the discrete time case, all processes $\eta_i,  i>k$ are zero there
while $\{\eta_1(t),\ldots,\eta_k(t)\}$ is a $k$-dimensional Gaussian
process having stationary independent increments. The means are $0$ and
variances and covariances are given by
$E[\eta_i(s)\eta_j(t)]=\min(s,t)D_{i,j},  i,j=1,\ldots,k$. The expressions
for these $D_{i,j}$ are provided in Section \ref{sec7}.
\end{theorem}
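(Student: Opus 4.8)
The plan is to reduce the continuous-time statement to an essentially discrete-time argument, following the same martingale-approximation scheme used to prove Theorem \ref{MainThm}, but respecting the two structural differences asserted in Theorem \ref{ContThm}: the processes $\eta_i$ with $i>k$ vanish in the limit, and only the $k$-dimensional ``diagonal'' part $\{\eta_1,\dots,\eta_k\}$ survives. As in the discrete case, first decompose $F$ according to (\ref{F})--(\ref{F0}), so that
\[
\xi_N(t)=\sum_{i=1}^k\xi_{i,N}(it)+\sum_{i=k+1}^\ell\xi_{i,N}(t),
\]
where now $\xi_{i,N}(t)=\frac1{\sqrt N}\int_0^{Nt/i}F_i(X(s),X(2s),\dots,X(is))\,ds$ for $i\le k$ and $\xi_{i,N}(t)=\frac1{\sqrt N}\int_0^{Nt}F_i(X(q_1(s)),\dots,X(q_i(s)))\,ds$ for $i>k$. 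One then proves tightness on $C[0,T]$ (which is automatic here since the prelimit processes are already continuous, so a Kolmogorov-type moment bound on increments suffices) together with convergence of finite-dimensional distributions, and finally identifies the covariance structure.

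The first main step is to show $\xi_{i,N}\to 0$ in probability for $i>k$. Here I would estimate $E|\xi_{i,N}(t)|^2$ directly: expanding the square gives a double integral of $E[F_i(X(q_1(s)),\dots)\,F_i(X(q_1(s')),\dots)]$, and because of the centering property (\ref{F0}) combined with the mixing bound (\ref{eqsix.1}) and the fact that, by (\ref{new2q}) and (\ref{2q.3}), the gaps $q_{i}(s)-q_{i-1}(s)$ and $q_{i+1}(\epsilon s)-q_i(s)$ tend to infinity, the correlation between the two factors decays and the diagonal contribution is of lower order than $N^2$. This is the continuous-time analogue of the discrete estimate that gives $D_{i,i}$ there; the key difference is that in continuous time the ``number of summands'' in a unit-length window of the fast variable $q_i$ is $O(q_i'(s))\to\infty$, which forces the normalized integral to vanish rather than converge to a nondegenerate Gaussian. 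I would reuse the moment and covering bounds from the proof of Theorem \ref{MainThm} almost verbatim, feeding in the continuous-time definitions (\ref{varpi})--(\ref{beta}) and using that $\varpi_{q,p}$ and $\beta$ are non-increasing so that integer-indexed summability controls the integrals.

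The second main step handles the surviving block $i\le k$. For each such $i$ one builds a martingale approximation to $\frac1{\sqrt N}\int_0^{Nt/i}F_i(X(s),\dots,X(is))\,ds$ with respect to a filtration in which ``future becomes present'' exactly as in the discrete argument: partition $[0,Nt/i]$ into blocks, replace $X(js)$ by its conditional expectation on an appropriate $\cF_{\cdot,\cdot}$ using (\ref{beta1}), and show the error is $o(\sqrt N)$ via (\ref{eqsix.5}); then the block sums form an approximating martingale difference array to which a martingale functional CLT applies. Joint convergence of the $k$-vector $(\xi_{1,N}(i_1t),\dots,\xi_{k,N}(i_kt))$, and of all finite-dimensional time marginals, then follows from the multidimensional martingale CLT, and the time-rescaling $t\mapsto it$ is incorporated at the end to get the representation (\ref{eq.main1}) with $\eta_i\equiv0$ for $i>k$. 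The covariances $D_{i,j}=\lim_N E[\xi_{i,N}(1)\xi_{j,N}(1)]$ are computed by expanding the double integral and passing to the limit using stationarity (\ref{2.1}), mixing, and the structure of the $q_j$'s; the explicit formulas are recorded in Section \ref{sec7}.

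The main obstacle, I expect, is the second step: making the martingale approximation work for integrals rather than sums when several arguments $X(s),X(2s),\dots,X(is)$ of $F_i$ move simultaneously and are strongly dependent. One has to choose block lengths growing with $N$ slowly enough that the within-block variance is controlled by (\ref{eqsix.5}) yet fast enough that the conditional-expectation replacement error (governed by $\beta(q,r)$ summability (\ref{beta1}) and the gap condition $\delta<\ka-d/p$) is negligible, and one must verify the conditional Lindeberg and conditional-variance conditions of the martingale CLT uniformly in $t\in[0,T]$ — this is where the moment budget in Assumption \ref{Hyp2} is genuinely used. The continuous-time smoothing is in fact a minor help for tightness but does not simplify this core estimate, which is the technical heart of both theorems.
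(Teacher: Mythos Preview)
Your overall architecture (decompose via (\ref{F}), show $\xi_{i,N}\to 0$ for $i>k$, run a martingale CLT for $i\le k$, identify covariances) matches the paper, but two points differ in substance. First, the paper does \emph{not} build a continuous-time martingale approximation with growing blocks; instead it discretizes by setting $I_i(n)=\int_n^{n+1}Y_i(q_i(s))\,ds$ and passes to $\tilde\xi_{i,N}(t)=N^{-1/2}\sum_{n\le S_i(Nt)}I_i(n)$, after which the entire discrete-time machinery of Sections \ref{sec4}--\ref{sec5} (the $R$--correction, martingale differences $Z_{i,r}$, Propositions \ref{subThm1}--\ref{subThm2}) applies verbatim with unit blocks. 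So the ``main obstacle'' you flag---tuning a block length growing with $N$---is a self-imposed difficulty that the paper sidesteps entirely. Second, your explanation of why $\eta_i\equiv 0$ for $i>k$ is not the paper's mechanism and is somewhat garbled (the density in the fast variable is $1/q_i'$, not $q_i'$). The actual reason is simpler: in the double integral $N^{-1}\iint b_{i,i}(u,v)\,du\,dv$ the diagonal $\{u=v\}$ has Lebesgue measure zero, so the near-diagonal strip $|u-v|<\gamma$ contributes $O(\gamma)$, while on $|u-v|\ge\gamma$ condition (\ref{new2q}) forces $q_i(v)-q_i(u)\to\infty$ and hence $b_{i,i}(u,v)\to 0$; letting $N\to\infty$ then $\gamma\to 0$ gives (\ref{8.9}). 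This is exactly the contrast with the discrete case, where the diagonal sum $\sum_n b_{i,i}(n,n)$ survives and yields $D_{i,i}=\int F_i^2\,d\mu^{\otimes i}$.
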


The conditions of Theorem \ref{ContThm} are satisfied when, for instance,
$X(t)=(X_1(t),\ldots,X_\wp(t))$ with $X_j(t)=f_j(\Up_t)$, where $\Up_t$
is either
an irreducible continuous time finite state Markov chain or a nondegenerate
diffusion process on a compact manifold. Furthermore, Ornstein--Uhlenbeck
type processes $X(t)$ produce a class of unbounded processes still satisfying
our assumptions.
On the other hand, these conditions do not usually hold true
for important classes of continuous time dynamical systems (flows) having
rich probabilistic properties such as Axiom A (in particular, Anosov) flows
where in the proof of conventional central limit theorems the standard tool of
suspension flows is usually applied while this does
not seem to work in our circumstances and a different approach should be
employed here.
%
\begin{remark}\label{momentrem}
Under stronger mixing and moment conditions it is possible to derive
convergence of all moments of $\xi_N(t)$ to the corresponding moments
of the limiting Gaussian process $\xi(t)$.
\end{remark}

\section{Approximation estimates}\label{sec3}

This section contains estimates which are crucial for our proofs and
some of them may also have independent interest beyond this paper. Still,
in the first reading the reader can skip this section all together and
only refer to general estimates of Corollary \ref{cor} when needed in what
follows.
We will make repeated use of the following simple variations of H\"{o}lder's
inequality.
%
\begin{lemma}\label{L50}
\textup{(i)} For any two random variables $Z,D$,
\[
\bigl\|Z^h D^\ka\bigr\|_a\le \|Z\|^h_{a^\ast}
\|D\|_{b^\ast}^\ka
\]
provided $\frac{1}{a}\ge\frac{h}{a^\ast}+\frac{\ka}{b^\ast}$.
If, in addition, $|D|\le|Z|$ a.e. (almost everywhere), we can replace
$\ka$
by $\al\le\ka$ and change $h$ to $h+\ka-\al$, obtaining
\[
\bigl\| Z^h D^\ka\bigr\|_a\le\bigl\| Z^{h+\ka-\al}
D^\al\bigr\|_a\le\|Z\|^{h+\ka-\al
}_{a^\ast} \|D
\|_{b^\ast}^{\al}
\]
provided $\frac{1}{a}\ge\frac{h+\ka-\al}{a^\ast}+\frac{\al
}{b^\ast}$.\vadjust{\goodbreak}

\mbox{}\hphantom{\textup{i}}\textup{(ii)} If $f(x,\omega)$ is a measurable function of $x$ and $\omega$
such that
for almost all ${\omega}$,
\[
\bigl|f(x,\omega)\bigr|\le C(\omega)\bigl[1+|x|^h\bigr],
\]
then
\[
\bigl\|f\bigl(X(\omega),\omega\bigr)\bigr\|_a\le \bigl(1+\gamma^h_m
\bigr) \bigl\|C(\omega)\bigr\|_p
\]
provided $\frac{1}{a}\ge\frac{1}{p}+\frac{h}{m}$ where $\gamma_m$
is a bound
for $\|X\|_m$.

\textup{(iii)} If $f(x,\omega)$ is a measurable function of $x$ and $\omega$ satisfying
for almost all $\omega$,
\[
\bigl|f(x,\omega)-f(y,\omega)\bigr|\le H(\omega)\bigl[1+|x|^h+|y|^h
\bigr]|x-y|^\delta,
\]
then
%
\begin{equation}
\label{eqfive1} \bigl\|f\bigl(X(\omega),\omega\bigr)-f\bigl(Y(\omega),\omega\bigr)
\bigr\|_a\le\bigl(1+2\gamma_{m}^h\bigr) \bigl\|H(\omega)
\bigr\|_p \|X-Y\|_{q}^\delta
\end{equation}
provided $\frac{1}{a}\ge\frac{1}{p}+\frac{h}{m}+\frac{\delta}{q}$ where
$\gamma_m$ is a bound for $\|X\|_m$ and $\|Y\|_m$.
\end{lemma}
\begin{pf} For (i), by H\"{o}lder's inequality,
\[
\bigl\|Z^hD^\ka\bigr\|_a= \bigl[E\bigl[Z^{ah}
D^{a\ka}\bigr] \bigr]^{1/a} \le\|Z\|^h_{a^\ast}
\|D\|^\ka_{b^\ast}
\]
provided $\frac{1}{a}\ge\frac{h}{a^\ast}+\frac{\kappa}{b^\ast}$.
If $|D|\le
|Z|$ and $0 \le\al\le\ka$,
\[
\bigl\|D^\ka Z^h\bigr\|_a\le \bigl\|D^\al
Z^{h+\ka-\al}\bigr\|_a\le \|Z\|^{(h+\ka-\al
)}_{a^\ast} \|D
\|_{b^\ast}^{\al}
\]
provided $\frac{1}{a}\ge\frac{h+\ka-\al}{a^\ast}+\frac{\al
}{b{^\ast}}$.\vspace*{1pt}

For (ii), by H\"{o}lder's inequality,
\begin{eqnarray*}
E\bigl[\bigl|f\bigl(X(\omega), \omega\bigr)\bigr|^a\bigr]
&\le& E \bigl[
\bigl[C(\omega)\bigr]^a  \bigl[1+|X|^h
\bigr]^a \bigr]
\\
&\le& \bigl[ E \bigl[ \bigl[C(\omega)\bigr]^p \bigr]
\bigr]^{a/p} \bigl[ E \bigl[\bigl[1+ |X|^h
\bigr]^{p^\ast/h} \bigr] \bigr]^{ah/p^\ast}
\end{eqnarray*}
provided $\frac{1}{a}\ge\frac{1}{p}+\frac{h}{p^\ast}$.\vspace*{1pt}

The assertion (iii) follows similarly from the inequality
\[
E\bigl[|XYZ|\bigr]\le\|X\|_{s_1}\|Y\|_{s_2}\|Z\|_{s_3},
\]
if $1\ge\frac{1}{s_1}+\frac{1}{s_2}+\frac{1}{s_3}$.
\end{pf}

We will need also the following.
%
\begin{lemma}\label{L21} \textup{(i)} Let $F(x_1,\ldots, x_{\ell-1}, x_\ell
)$ be any
function that satisfies (\ref{24}) and (\ref{24b}). Then the functions
$F_i(x_1,\ldots, x_i)$ defined in (\ref{18}) will inherit similar properties
from $F$.

\textup{(ii)} Let $Z$ be a random vector in $L_\iota(P)$ with $\|Z\|_{\iota
}\le
\gamma_\iota$ and $\cG\subset\cF$ be a sub $\sigma$-field. If
\[
G_i(x_1,\ldots, x_{i-1}, \omega)=E
\bigl[F_i\bigl(x_1,\ldots, x_{i-1}, Z(\omega)
\bigr)|\cG\bigr],
\]
then
\[
\bigl|G_i(x_1,\ldots, x_{i-1}, \omega)\bigr|\le C
\bigl(1+C(\omega)^\iota +|x|^{\iota}\bigr)
\]
and
\begin{eqnarray*}
&&
\bigl|G_i(x_1,\ldots, x_{i-1},
\omega)-G_i(y_1,\ldots, y_{i-1}, \omega )\bigr|
\\
&&\qquad\le C \bigl( 1+ C(\omega)^\iota+|x|^\iota+|y|^{\iota}
\bigr) |x-y|^\ka,
\end{eqnarray*}
where $C>0$ is a constant, $C(\omega)= (2E[|Z|^\iota|\cG] )^{1/\iota}$ and $\|C(\omega)\|^\iota_\iota\le 2\gamma_\iota^\iota$.
\end{lemma}
\begin{pf}
For (i), if
\[
\bigl|F(x_1,x_2,\ldots, x_i)\bigr|\le
C_1\bigl(C_2+|x|^\iota\bigr),
\]
then
\begin{eqnarray*}
\biggl|\int F(x_1,\ldots, x_{i-1}, x_i) \,d
\mu(x_i)\biggr|&\le& \int\bigl|F(x_1,\ldots, x_{i-1},
x_i)\bigr| \,d\mu(x_i)
\\
&\le& C_1 \bigl(C_2+|x|^\iota+
\gamma^\iota_\iota \bigr).
\end{eqnarray*}
The H\"{o}lder property is similar.

Assertion (ii) follows from
\[
\bigl|G_i(x_1,\ldots, x_{i-1}, \omega)\bigr|\le E
\bigl[\bigl|F_i(x_1,\ldots, x_{i-1}, Z)\bigr| \vert\cG
\bigr] \le C_1E\bigl[ \bigl(C_2+|x|^{\iota}+|Z|^{\iota}
\bigr)|\cG\bigr]
\]
and
\begin{eqnarray*}
&&\bigl|G_i(x_1,\ldots, x_{i-1},
\omega)-G_i(y_1,\ldots, y_{i-1}, \omega)\bigr| \\
&&\qquad\le E
\bigl[\bigl|F_i(x_1,\ldots, x_{i-1}, Z)
-F_i(y_1,\ldots, y_{i-1}, Z)\bigr| \vert\cG
\bigr] \\
&&\qquad\le CE \bigl[ \bigl(1+|x|^\iota+|y|^\iota+
2|Z|^\iota\bigr) \vert\cG \bigr]|x-y|^\ka.
\end{eqnarray*}
\upqed\end{pf}
%
\begin{remark}
Here and in what follows it is sometimes more convenient to use together
with (\ref{24}) and (\ref{24b}) also slightly different-looking conditions
for growth and H\"older continuity of functions we are dealing with
(i.e., considering $|x|^\iota$ in place of $\sum_{j=1}^\ell
|x_j|^\iota,
x\in\bbR^{\ell\wp}$), but,
in fact, these sets of conditions are equivalent since for any $b_1,b_2,\ldots,
b_l\geq0$ and ${\gamma}>0$,
%
\begin{equation}
\label{bgamma} \max_{1\leq i\leq l}b_i^{\gamma}\leq\sum
^l_{i=1}b_i^{\gamma}\leq
l\max_{1\leq i\leq l} b_i^{\gamma}\leq l\Biggl(\sum
^l_{i=1}b_i\Biggr)^{\gamma}\leq
l^{1+{\gamma}}\max_{1\leq i\leq l}b_i^{\gamma}.
\end{equation}
\end{remark}

We will need the following result which will serve as a base for our estimates
and is, in fact, an extended multidimensional version of the standard
Kolmogorov theorem on the H\"older continuity of sample paths.
%
\begin{theorem}\label{kol}
Let $f(x, \omega)$ be a collection of random variables continuously
(or separable) dependent on $x\in\bbR^d$ for almost all $\omega$ and
satisfying
%
\begin{eqnarray}
\label{f1} \bigl\|f( x,\omega)-f( y,\omega)\bigr\|_{p}&\le& C_1
\bigl(1+|x|^\iota+ |y|^\iota \bigr)|x-y|^\ka
\quad\mbox{and}\nonumber\\[-8pt]\\[-8pt]
\bigl\|f(x,\omega)\bigr\|_{p}&\le& C_2
\bigl(1+|x|^\iota\bigr)\nonumber
\end{eqnarray}
with $\ka>\frac{d}{p}$. Then for any $\iota'> \iota+\frac{d}{ p}$
and $\te$ such that $\ka> \theta>\frac{d}{p}$ there is a random variable
$G(\omega)$ such that
%
\begin{eqnarray}
\label{G} \bigl|f(x,\omega)\bigr|\le G(\omega)
\bigl(1+|x|^{\iota'}\bigr)\nonumber\\[-8pt]\\[-8pt]
&&\eqntext{\mbox{a.e. }   \mbox{ with }   \bigl\|G(\omega)\bigr\|_{p} \le
c_0[C_1+C_2]^{{d}/({p\te})}
 C_2^{1-{d}/({p\te})},}
\end{eqnarray}
where $c_0=c_0(d, p, \ka, \te, \iota, \iota')>0$ depends only on
parameters in
brackets. Since $\ka\leq1$ and $p\ka>d$, it follows that
$p>d$ and, therefore, we can always take $\iota'=\iota+1$.
Furthermore, if $Z\in
L_m(P)$ is a random variable with values in $\bbR^d$ satisfying $\|Z\|_m\le
\gamma_m$ and if $\frac{1}{a}\ge\frac{1}{p}+\frac{\iota+1}{m}$, then
%
\begin{eqnarray}
\label{eq54} \bigl\|f\bigl(Z(\omega),\omega\bigr)\bigr\|_{a}&\le&\bigl\| G(\omega)
\bigl(1+|Z|^{\iota+1}\bigr)\bigr\|_a
\nonumber\\
&\le& c_0[C_1+C_2]^{{d}/({p\te})}C_2^{1-{d}/({p\te})}
\bigl[1+\gamma^{\iota+1}_m\bigr] \\
&=&c_0c(
\gamma_m)[C_1+C_2]^{{d}/({p\te})}C_2^{1-{d}/({p\te})}.
\nonumber
\end{eqnarray}
If $p(\ka-\delta)>d$, then we can have an almost sure H\"{o}lder estimate
\[
\bigl|f(x,\omega)-f(y,\omega)\bigr|\le H(\omega)\bigl[1+|x|^{\iota+2}+|y|^{\iota+2}
\bigr] |x-y|^\delta
\]
with
\[
\bigl\|H(\omega)\bigr\|_p\le c(\ka,\te, d, p, \delta, \iota)
(C_1+C_2)
\]
and the estimate
%
\begin{eqnarray}
\label{XY}
&&\bigl\|f(X_1, X_2,\ldots, X_{i-1},
\omega)-f(Y_1, Y_2,\ldots, Y_{i-1},\omega)
\bigr\|_a
\nonumber\\
&&\qquad\le\bigl\|H (\omega)\bigl[1+|X|^{\iota+2}+|Y|^{\iota+2}\bigr]
|X-Y|^\delta\bigr\|_a
\\
&&\qquad\le\|H\|_{p}\bigl(1+\gamma_{m}^{\iota+2}\bigr)\sum
_{j=1}^{i-1}\|X_j-Y_j
\|_{q}^\delta
\nonumber
\end{eqnarray}
provided $\frac{1}{a}\ge\frac{1}{p}+\frac{\iota+2}{m}+\frac
{\delta}{q}$,
where $X=(X_1,\ldots,X_{i-1}),  Y=(Y_1,\ldots,Y_{i-1})\in\bbR^d$ and
$X_j,Y_j,
j=1,\ldots,i-1$, are random vectors with $\| X\|_m,\| Y\|_m\leq{\gamma}_m$.
\end{theorem}
%
\begin{remark}
There are several types of constants that we need to keep track of. Constants
$C,K$ will be absolute and may change from line to line. Constants $c$
will depend on other parameters like moments and will be denoted by
$c(\cdot)$ to indicate this dependence.
\end{remark}
\begin{pf*}{Proof of Theorem \ref{kol}}
For $\iota'=\iota+1>\iota+\frac dp$ set
\[
\tilde f(x,{\omega})=f(x,{\omega}) \bigl(1+|x|^{\iota+1}
\bigr)^{-1}.
\]
Then by (\ref{f1}), if $|x-y|\le\rho_0=\frac{\sqrt d}{2}$,
%
\begin{eqnarray}
\label{f2}
&&\bigl\|\tilde f(x,{\omega})-\tilde f(y,{\omega})\bigr\|_p\nonumber\\
&&\qquad\leq\bigl\|
f(x,{\omega })-f(y,{\omega})\bigr\|_p \bigl(1+|x|^{\iota+1}
\bigr)^{-1}
\nonumber\\[-8pt]\\[-8pt]
&&\qquad\quad{}+\bigl\| f(y,{\omega})\bigr\|_p \bigl||y|^{\iota+1}-|x|^{\iota+1}
\bigr|\eta(x) \nonumber\\
&&\qquad\leq c_1[C_1+C_2]|x-y|^\ka
\eta(x)
\nonumber
\end{eqnarray}
and
%
\begin{equation}
\label{f3} \bigl\|\tilde f(x,{\omega})\bigr\|_p\leq C_2\eta(x),
\end{equation}
where $\eta(x)=(1+|x|^\iota)(1+|x|^{\iota+1})^{-1}$ and
$c_1=c_1(\iota,
\ka,d)<\infty$ is a constant depending only on the parameters in
brackets. Let
$B_w(\rho)$ denote an open unit ball of radius $\rho$ centered at
$w\in\bbR^d$. A multivariate generalization of a result of Garsia, Rodemich
and Rumsey (see \cite{SV}, page 60) states that if a continuous (or separable)
$g\dvtx  \bbR^d \to\bbR$ satisfies
\[
\int_{B_w(\rho)\times B_w(\rho)}\Psi \biggl(\frac{|g(x)-g(y)|}{\sig
(|x-y|)} \biggr) \,dx \,dy \le
Q_{w,\rho}
\]
for some continuous strictly increasing functions $\Psi,\sig$ with
$\sig(0)
=\Psi(0)=0$, then for any $x,y\in B_w(\rho)$,
%
\begin{equation}
\label{GRR} \bigl|g(x)-g(y)\bigr|\le8\int_0^{2|x-y|}
\Psi^{-1} \biggl(\frac
{4^{d+1}Q_{w,\rho}} {
k_d u^{2d}} \biggr)\,d\sig(u),
\end{equation}
where $k_d=\inf_{a\in B_w(\rho),0<u\le2}\frac{|B_a(u)\cap B_0(1)|} {
u^d}$. Choose here $\Psi( z)=|z|^p$ and $\sig(u)=u^{\theta+{2d}/{p}}$
with $0<\te<\ka-\frac dp$ and set
\[
\bigl[Q_{w,\rho}({\omega})\bigr]^p=\int_{B_w(\rho)\times B_w(\rho)}
\frac
{|\tilde f(x,{\omega})-
\tilde f(y,{\omega})|^p}{|x-y|^{p\te+2d}} \,dx \,dy.
\]
Then by the result above together with (\ref{f2}) we derive that there
exists $c_2=c_2(\iota,\iota',\ka,\te,p,d)>0$ such that for any
$x,y\in
B_w(\rho)$,
%
\begin{equation}
\label{f4} \bigl|\tilde f(x,\omega)-\tilde f(y,\omega)\bigr|\le c_2
 Q_{w,\rho}({\omega })|x-y|^\te
\end{equation}
and for $0 < \rho\le\rho_0$,
%
\begin{equation}
\label{f5} \|Q_{w,\rho}\|_p \leq c_2
v_d (C_1+C_2) \eta(w) \rho^{(\ka-\te)},
\end{equation}
where
\[
v^p_d=\int_{B_0(1)\times B_0(1)}
|x-y|^{\ka p-p\te-2d}  \,dx  \,dy<\infty
\]
provided $p(\ka-\te)>d$. Observe that (\ref{f4})
and (\ref{f5}) are, in fact, the conclusion of a multidimensional
version of
the Kolmogorov theorem (see, e.g., \cite{Ku}, Theorem 1.4.1), but
our argument relies also on the specific estimate (\ref{f5}).

Let $ \bbZ_h^d$ be the lattice in $\bbR^d$ with spacing $h$. The maximum
distance of any point in $\bbR^d$ from $\bbZ_h^d$ is
$h\frac{\sqrt d}{2}=h\rho_0$. Therefore, in the cube of side $h$ centered
around $w\in\bbZ_h^d$ we have
\[
\bigl|\tilde f(x,\omega)\bigr|\le\bigl|\tilde f(w,{\omega})\bigr|+ c_2
Q_{w, h\rho
_0}({\omega}) \rho_0^\te
h^\te
\]
and so
\[
\bigl|\tilde f(x,\omega)\bigr|^p\le2^{p-1}\bigl[\bigl|\tilde f(w,{
\omega})\bigr|^p+ c^p_2 Q^p_{w, h
\rho_0}({
\omega})\rho_0^{p\te} h^{p\te}\bigr].
\]
Therefore,
\begin{eqnarray*}
\sup_{x\in\bbR^d} \bigl|\tilde f(x,\omega)\bigr|^p&\le&2^{p-1}
\sup_{w\in
\bbZ_h^d} \bigl[\bigl|\tilde f(w,{\omega})\bigr|^p+c^p_2
Q^p_{w, h\rho_0}({\omega})\rho_0^{p\te}
h^{p\te} \bigr]
\\
&\le& 2^{p-1} \sum_{w\in\bbZ_h^d} \bigl[\bigl|\tilde
f(w,{\omega })\bigr|^p+c^p_2
Q^p_{w,
h\rho_0}({\omega})\rho_0^{p\te}
h^{p\te} \bigr]
\end{eqnarray*}
and, using (\ref{f5}) together with the estimate
$\sum_{w\in\bbZ_h^d} [\eta(w)]^p\le c^p_4(d,i,p) h^{-d}$,
\begin{eqnarray*}
E \Bigl[\Bigl[\sup_{\bbR^d} \bigl|\tilde f(x,\omega)\bigr|^p\Bigr]
\Bigr]&\le&2^{p-1} \sum_{w\in
\bbZ_h^d}\bigl\|\tilde f(w,{
\omega})\bigr\|^p_p
\\
&&{}+2^{p-1}c^p_2 \rho_0^{p\te}
h^{p\te} \sum_{w\in\bbZ_h^d}\bigl\|Q_{w, h\rho_0}({
\omega})\bigr\|^p_p
\\
&\le& c_3^p\bigl[C^p_2+(C_1+C_2)^p
h^{p\ka}\bigr] \sum_{w\in\bbZ_h^d}\bigl[\eta(w)
\bigr]^p \\
&\le& c_5^p \bigl[C^p_2+(C_1+C_2)^p
h^{p\ka}\bigr]h^{-d}
\end{eqnarray*}
with a constant $c_5=c_5(d,p,\iota,\ka,\te)>0$.
Making the choice of
$h=\break [\frac{C_2}{C_1+C_2}]^{1/\ka}\le1$,
\[
E \Bigl[\Bigl[\sup_{\bbR^d} \bigl|\tilde f(x,\omega)\bigr|^p\Bigr]
\Bigr]\le c_6^p C^{p-
{d/\ka}}_2[C_1+C_2]^{{d/\ka}}.
\]
Now set
\[
\Phi({\omega})=\sup_{x\in\bbR^d}\bigl|\tilde f(x,{\omega})\bigr|.
\]
Then
\[
\bigl|f(x,{\omega})\bigr|\leq\Phi({\omega}) \bigl(1+|x|^{\io+1}\bigr)
\]
and so
\[
\bigl|f\bigl(Z({\omega}),{\omega}\bigr)\bigr|\leq\Phi({\omega})\bigl(1+\bigl|Z({
\omega})\bigr|^{\io+1}\bigr).
\]
These yield (\ref{G}) and (\ref{eq54}) follows by a routine
application of
the H\"{o}lder inequality (see Lemma \ref{L50}).

We now proceed to obtain a H\"{o}lder estimate on $f(x,\omega)$. If
$p(\ka-\delta)>d$, then by (\ref{f4}) and (\ref{f5}) in the same
way as
above for $x,y$ in a cube of side~1,
\[
\bigl|\tilde f(x,\omega)-\tilde f(y,\omega)\bigr|\le C_\delta(\omega)
|x-y|^\delta
\]
with $\|C_\delta(\omega)\|_{p}\le c(\ka, d, \delta)(C_1+C_2)$. For
such a
cube $D$ centered at $z$, we obtain that
\[
\bigl|f(x,\omega)-f(y,\omega)\bigr|\le\tilde C_\delta(z, \omega)
|x-y|^\delta
\]
with
$\|\tilde C_\delta(z, \omega)\|_{p}\le c_7(\ka, d,\delta,\iota)
(1+|z|^{\iota
+1})(C_1+C_2)$. It follows that whenever $|x-y|\le1$,
\[
\bigl|f(x,\omega)-f(y,\omega)\bigr|\le C^\ast(\omega)\bigl[1+|x|^{\iota
+1}+|y|^{\iota+1}
\bigr] |x-y|^\delta,
\]
where $\|C^\ast\|_{p}\le c_8(\ka, d, \delta, \iota) (C_1+C_2)$.
Then for
some $H(\omega)=c_9(\delta,\iota)C^\ast(\omega)$ we obtain the
global estimate
\[
\bigl|f(x,\omega)-f(y,\omega)\bigr|\le H (\omega)\bigl[1+|x|^{\iota+2-\delta
}+|y|^{\iota+2
-\delta}
\bigr] |x-y|^\delta
\]
for all $x,y$. In particular, by Lemma \ref{L50},
\begin{eqnarray*}
&&\bigl\|f(X_1, X_2,\ldots, X_{i-1},
\omega)-f(Y_1, Y_2,\ldots, Y_{i-1},\omega)
\bigr\|_{a} \\
&&\qquad\le\bigl\|H (\omega)\bigl[1+|X|^{\iota+2}
+|Y|^{\iota+2}\bigr] |X-Y|^\delta\bigr\|_{a}\\
&&\qquad\le K   \|H
\|_{p}\bigl(1+\gamma_{m}^{\iota+2}\bigr)\sum
_{j=1}^{i-1}\|X_j-Y_j
\|_{q}^\delta
\end{eqnarray*}
provided $\frac{1}{a}\ge\frac{1}{p}+\frac{\iota+2}{m}+\frac
{\delta}{q}$.
\end{pf*}

In our nonconventional setup Theorem \ref{kol} will be applied in the form
of the following useful result.
%
\begin{corollary}\label{cor}
Let $\cG$ and $\cH_1\subset\cH_2$ be $\sig$-subalgebras on a
probability space
$(\Om,\cF,P)$, $X$ and $Y$ be $d$-dimensional random vectors and $f_i=
f_i(x,{\omega}),  i=1,2$, be collections of random variables that are
continuously
(or separable) dependent on $x\in\bbR^d$ for almost all $\omega$,
measurable with respect to~$\cH_i$,  $i=1,2$, respectively, and satisfy
%
\begin{eqnarray}
\label{f11}
\bigl\|f_i( x,\omega)-f_i( y,\omega)
\bigr\|_{q}&\le& C_1 \bigl(1+|x|^\iota+
|y|^\iota\bigr) |x-y|^\ka
\quad\mbox{and}\nonumber\\[-8pt]\\[-8pt]
\bigl\|f_i(x,\omega)\bigr\|_{q}&\le& C_2
\bigl(1+|x|^\iota\bigr).
\nonumber
\end{eqnarray}
Set $\tilde f_i(x,{\omega})=E[f_i(x,\cdot)|\cG]({\omega})$ and
$g_i(x)=E[f_i(x,{\omega})]$.

\begin{longlist}
\item
Assume that $q\ge p$, $1\ge\ka>\te>\frac dp$ and
$\frac{1}{a}\geq\frac{1}{p}+\frac{\iota+1}{m}$. Then for $i=1,2$,
%
\begin{eqnarray}
\label{Co1}
&&\bigl\| \tilde f_i\bigl(X({\omega}),{\omega}
\bigr)-g_i(X)\bigr\|_a\nonumber\\[-8pt]\\[-8pt]
&&\qquad\leq c \vp_{q,p}(\cG,
\cH_i) (C_1+C_2)^{d/({p\te})}C_2^{1-d/({p\te})}
\bigl(1+\| X\|^{\io+1}_{m}\bigr),\nonumber
\end{eqnarray}
where $c=c(\io,\ka,\te,p,q,a,\del,d)>0$ depends only on the
parameters in
brackets.

\item Next, assume that $\frac{1}{a}\geq\frac{1}{p}+\frac{\iota+2}{m}+
\frac{\del}{q}$. Then for $i=1,2$,
%
\begin{eqnarray}
\label{Co2}
&&\bigl\| E\bigl[f_i(X,\cdot)|\cG\bigr]-g_i(X)
\bigr\|_a\nonumber\\[-8pt]\\[-8pt]
&&\qquad\leq R+2c(C_1+C_2) \bigl(1+2\| X
\|^{\io+2}_{m}\bigr) \bigl\| X-E[X|\cG]\bigr\|^\delta_{q},\nonumber
\end{eqnarray}
where $R$ denotes the right-hand side of (\ref{Co1}).

\item Furthermore, let
$x=(v,z)$ and $X=(V,Z)$, where $V$ and $Z$ are $d_1$ and $d-d_1$-dimensional
random vectors, respectively, and let $f_i(x,{\omega})=f_i(v,z,{\omega
})$ satisfy
(\ref{f11}) in $x=(v,z)$. Set $\tilde g_i(v)=E[f_i(v,Z({\omega
}),{\omega})]$. Then
for $i=1,2$,
%
\begin{eqnarray}
\label{Co3}
&&\bigl\| E\bigl[f_i(V,Z,\cdot)|\cG\bigr]-\tilde
g_i(V)\bigr\|_a\nonumber\\
&&\qquad\leq c \bigl(1+\| X\|^{\io+2}_{m}
\bigr)
\nonumber\\[-8pt]\\[-8pt]
&&\qquad\quad{}\times \bigl(\vp_{q,p}(\cG,\cH_i) (C_1+C_2)^{{d_1}/({p\te})}C_2^{1-
{d_1}/({p\te})}\nonumber\\
&&\qquad\quad\hspace*{20.5pt}{}+
\bigl\| V-E[V|\cG]\bigr\|^\delta_q+\bigl\| Z-E[Z|\cH_i]
\bigr\|^\delta_q \bigr).
\nonumber
\end{eqnarray}

\item Finally, for $a,p,q,\iota,m,\del$ satisfying conditions of \textup{(ii)},
%
\begin{eqnarray}
\label{Co4} &&\bigl\| \tilde f_1\bigl(X({\omega}),{\omega}\bigr)-\tilde
f_2\bigl(Y({\omega}),{\omega }\bigr)-g_1(X)+g_2(Y)
\bigr\|_a
\nonumber\\[-8pt]\\[-8pt]
&&\qquad\leq c \vp_{q,p}(\cG,\cH_2) \bigl(1+\| X
\|^{\io+2}_{m}+\| Y\|^{\io+2}_{m} \bigr)\|
X-Y\|^\del_{q},
\nonumber
\end{eqnarray}
where $c=c(\io,\ka,\te,p,q,a,\del,d)>0$ depends only on the
parameters in
brackets.
\end{longlist}
\end{corollary}
\begin{pf} (i) Set
$h(x,{\omega})=\tilde f_i(x,{\omega})-g_i(x)$,
$K_1=C_1\vp_{q,p}(\cG,\cH_i)$ and $K_2=C_2\vp_{q,p}(\cG,\cH_i)$. Then
by (\ref{f11}) and the definition of $\vp_{q,p}$ for all $x,y\in\bbR^d$
and $q,p\geq1$,
%
\begin{eqnarray}
\label{h1}
&&
\bigl\| h(x,{\omega})-h(y,{\omega})\bigr\|_p \nonumber\\
&&\qquad\leq
\vp_{q,p}(\cG,\cH_i)\bigl\| f_i(x,{
\omega})-f_i(y,{\omega}) -g_i(x)+g_i(y)
\bigr\|_q
\\
&&\qquad\leq 2 K_1\bigl(1+|x|^\io+|y|^{\io}
\bigr)|x-y|^\ka
\nonumber
\end{eqnarray}
and
%
\begin{equation}
\label{h2} \quad\bigl\| h(x,{\omega})\bigr\|_p\leq\vp_{q,p}(\cG,
\cH_i)\bigl\| f_i(x,{\omega })-g_i(x)
\bigr\|_q\leq 2 K_2\bigl(1+|x|^\io\bigr).
\end{equation}
These inequalities enable us to apply Theorem \ref{kol} to
$h(x,{\omega})$ [in
place of $f(x,{\omega})$ there] and (\ref{Co1}) follows from (\ref{eq54}).

(ii) Note that since $ 1> \frac{d}{q}$
it follows that ${\tilde f_i}(x,\omega)$ has an almost surely continuous
modification and taking into account that
$\tilde X=E[X|\cG]$ is $\cG$-measurable, we obtain that
$E[ f_i(\tilde X,\cdot)|\cG]={\tilde f_i} (\tilde X,\cdot)$. Therefore,
%
\begin{eqnarray}
\label{Cop}\quad
&&\bigl\| E \bigl[f_i(X,\cdot)|\cG\bigr]-g_i(X)
\bigr\|_a \nonumber\\
&&\qquad\leq\bigl\| E\bigl[f_i(\tilde X,\cdot )|{\cG}\bigr]-
g_i(\tilde X)\bigr\|_a
\nonumber\\[-8pt]\\[-8pt]
&&\qquad\quad{}+\bigl\|E\bigl[f_i(\tilde X,\cdot)|{\cG}\bigr] -E\bigl[f_i(X,
\cdot)|{\cG}\bigr]\bigr\|_a+ \bigl\| g_i(\tilde X)-
g_i(X)\bigr\|_a
\nonumber
\\
&&\qquad\leq \bigl\| {\tilde f_i}(\tilde X,\cdot)-g_i(\tilde X)
\bigr\|_a +\bigl\|f_i(\tilde X,\cdot)-f_i(X,\cdot)
\bigr\|_a+\bigl\| g_i(\tilde X)-g_i(X)\bigr\|_a.
\nonumber
\end{eqnarray}
We can estimate the first term in the right-hand side of (\ref{Cop}) by
(\ref{Co1}), with $\tilde X$ replacing $X$ and noting that
$\|\tilde X\|_m\le\|X\|_m$. The second term is estimated by (\ref{XY}),
%
\begin{equation}
\label{Cop2} \bigl\|f_i(\tilde X,\omega)-f_i(X,\omega)
\bigr\|_a\le c C_1\bigl(1+\gamma_m^{\iota+2}
\bigr)\| \tilde X-X\|_{q}^\delta.
\end{equation}
The third term is easily estimated taking into account that by (\ref{f11})
and Lem\-ma~\ref{L21},
\[
\bigl|g_i(x)-g_i(y)\bigr|\le c \bigl[1+|x|^\iota+|y|^\iota
\bigr] |x-y|^\ka
\]
and since $0<\delta<\ka\le1$, it follows from H\"{o}lder's
inequality that
\[
\bigl\|g_i(X)-g_i(\tilde X)\bigr\|_a\le c\bigl(1+
\gamma_m^{\iota+2} \bigr)\|\tilde X-X\|_{q}^\delta.
\]

(iii) Set $\tilde V=E[V|\cG]$, $\tilde Z=E[Z|\cH]$,
$\hspace*{1pt}\tilde{\hspace*{-1pt}\tilde g}_i(v)=E[f_i(v,\tilde Z,\cdot)]$ and
$\tilde g_i (v)=\break E[f_i(v,Z,\cdot)]$. Then
%
\begin{eqnarray}
\label{Cop1}
&&
\bigl\| E\bigl[f_i(V,Z,\cdot)|\cG\bigr]-\tilde
g_i(V)\bigr\|_a\nonumber\\
&&\qquad\leq \bigl\| f_i(V,Z,\cdot)-
f_i(V,\tilde Z,\cdot)\bigr\|_a
\\
&&\qquad\quad{} + \bigl\| E\bigl[f_i(V,\tilde Z,\cdot)|\cG\bigr]- \hspace*{1pt}\tilde{\hspace*{-1pt}\tilde
g}_i(V)\bigr\|_a +\bigl\| \hspace*{1pt}\tilde{\hspace*{-1pt}\tilde g}_i(V)-\tilde
g_i(V)\bigr\|_a.
\nonumber
\end{eqnarray}
The first term in the right-hand side of (\ref{Cop1}) is estimated by
(\ref{XY}) similarly to (\ref{Cop2}). Observe that $f_i(v,\tilde
Z,\cdot)$
is $\cH_i$-measurable, and so we can estimate the second term in the
right-hand side of (\ref{Cop1}) by (\ref{Co2}) with $V$, $d_1$,
$\tilde f_i(v,{\omega})$ and $\tilde{\tilde g}_i(v)$ in place of $X$, $d$,
$f_i(x,{\omega})$ and $g_i(x)$, respectively. The third term in the right-hand
side of (\ref{Cop1}) is estimated by first using (\ref{f11}) to obtain
\begin{eqnarray*}
\bigl|\hspace*{1pt}\tilde{\hspace*{-1pt}\tilde g}_i(v)-\tilde g_i (v)\bigr|&\le& E
\bigl[\bigl|f_i(v, \tilde Z, \cdot)- f_i(v, Z,\cdot)\bigr|\bigr]\\
&\le&
E\bigl[\bigl(1+|v|^\iota+ |Z|^\io+|\tilde Z|^\io
\bigr)|Z-\tilde Z|^\ka\bigr]
\end{eqnarray*}
and then substituting $V$ in place of $v$ there.

(iv) Set $\hat h(x,{\omega})=\tilde f_1(x,{\omega})- \tilde
f_2(x,{\omega})-g_1(x)+g_2(x)$, $\hat K_1=C_1\vp_{q,p}(\cG,\cH_2)$ and
$\hat K_2=C_2\vp_{q,p}(\cG,\cH_2)$. Then by (\ref{f11}) and the
definition of $\vp_{q,p}$ for all $x,y\in\bbR^d$ and $q,p\geq1$,
%
\begin{eqnarray}
\label{h3} &&\bigl\|\hat h(x,{\omega})-\hat h(y,{\omega})\bigr\|_p \nonumber\\
&&\qquad\leq
\vp_{q,p}(\cG,\cH_2)\bigl\| f_1(x,{\omega})
-f_1(y,{\omega})
-g_1(x)+g_1(y)\nonumber\\[-8pt]\\[-8pt]
&&\hspace*{60pt}\qquad\quad{}-f_2(x,{
\omega})+f_2(y,{\omega})+g_2(x)-g_2(y)
\bigr\|_q
\nonumber
\\
&&\qquad\leq 2\hat K_1\bigl(1+|x|^\io+|y|^{\io}
\bigr)|x-y|^\ka
\nonumber
\end{eqnarray}
and
%
\begin{eqnarray}
\label{h4} \qquad\bigl\|\hat h(x,{\omega})\bigr\|_p&\leq&\vp_{q,p}(\cG,
\cH_2)\bigl\| f_1(x,{\omega })-f_2(x,{
\omega})-g_1(x) +g_2(x)\bigr\|_q\nonumber\\[-8pt]\\[-8pt]
&\leq&2\hat
K_2\bigl(1+|x|^\io\bigr).\nonumber
\end{eqnarray}
Now (\ref{h3}) and (\ref{h4}) enable us to apply (\ref{XY}), which yields
(\ref{Co4}).
\end{pf}
%
\begin{remark}\label{R41}
We will always work with $a, m, p, \delta, q$ that satisfy
$p(\ka-\delta) >d=(\ell-1)\wp$ and
%
\begin{equation}
\label{eq51} \frac{1}{a}\ge\frac{1}{p}+\frac{\iota+2}{m}+
\frac{\delta}{q}.
\end{equation}
Note also that $m\ge\frac{ap(\iota+2)}{p-a}$ and $m\ge\frac
{aq(\iota+2)} {
q-a\delta}$.
\end{remark}

\section{Limiting covariances}\label{sec4}

Here and in what follows we set
$Y_{i,q_i(n)}=\break  F_i(X(q_1(n)),\ldots, X(q_i(n)))$ and $Y_{i,m}=0$
if $m\ne q_i(n)$ for any $n$. Let
$F_{i,n,r}(x_1,\allowbreak x_2,\ldots,x_{i-1},\omega)=E[F_i(x_1,x_2,\ldots,
x_{i-1},X(n))|\cF_{n-r,n+r}]$ and
$X_r(n)=E[X(n)|\allowbreak\cF_{n-r,n+r}]$. We denote also
$Y_{i,q_i(n),r}=F_{i,q_i(n),r}(X_r(q_1(n)),\ldots, X_r(q_{i-1}(n)),
\omega)$ and $Y_{i,m,r}=0$ if $m\ne q_i(n)$ for any $n$. In view of
(\ref{24}), we can and will always choose continuous in $(x_1,\ldots,x_{i-1})$
versions of conditional expectations $F_{i,n,r}$ which will enable us to
apply Corollary \ref{cor} when needed.

In this section we will study the asymptotical behavior of covariances
\[
D_{i,j}(N,s,t)=E\bigl[\xi_{i,N}(s)\xi_{j,N}(t)
\bigr]=\frac{1}{N}\sum_{1\leq
n\leq Ns} \sum
_{1\leq l\leq Nt} E[Y_{i,q_i(n)}Y_{j,q_j(l)}]
\]
of the processes $\{\xi_{i,N}(t)\}$ defined by (\ref{def21}) and
(\ref{def22}).
We will show that the limits
\[
D_{i,j} (s,t)=\lim_{N\to\infty} D_{i,j}(N,s,t)
\]
exist and $D_{i,j} (s,t) =\min(s,t) D_{i,j}$, where the matrix $\{
D_{i,j}\}$ is
determined by the results below.
%
\begin{proposition}\label{covariance} For any $i,j=1,2,\ldots,k$ and
$s,t>0$ the limit
\begin{eqnarray*}
&&\lim_{N\to\infty}E \bigl[\xi_{i,N}(s)\xi_{j,N}(t)
\bigr]
\\
&&\qquad=\lim_{N\to\infty}\frac{1}{N}\mathop{\sum_{{0\le in\le Ns}}}_{0\le
jl\le Nt} E
\bigl[F_i\bigl(X (n), X (2n),\ldots, X(in)\bigr)\\
&&\hspace*{120.5pt}{} \times F_j
\bigl(X(l), X (2l),\ldots, X (jl)\bigr) \bigr]
\end{eqnarray*}
exists and equals $D_{i,j}\min(s,t)$, which is calculated as follows.
Let ${\upsilon}$ be the greatest common divisor of $i$ and $j$ with
$i={\upsilon}i'$,
$j={\upsilon}j'$ and $i', j'$ being coprime. Set
\begin{eqnarray*}
&&
A_{i,j}(x_{i'}, x_{2i' },\ldots,
x_{{\upsilon}i'},y_{j'}, y_{2j'},\ldots, y_{{\upsilon}j'})\\
&&\qquad=
\int F_i (x_1,\ldots, x_{i-1},x_i)
\\
&&\hspace*{8.3pt}\qquad\quad{}\times F_j (y_1,\ldots, y_{j-1},y_j)
\prod_{\sigma\notin\{i',2i',\ldots,{\upsilon}i'\}} \,d\mu(x_\sigma) \prod
_{\sigma'\notin\{j',2j',\ldots,{\upsilon}j'\}}\,d\mu(y_{\sigma'})
\end{eqnarray*}
and
%
\begin{equation}
\label{eqseven1}\qquad a_{i,j}(n_1,n_2,\ldots,
n_{\upsilon})=\int A_{i,j}(x_1,\ldots,
x_{\upsilon},y_1,\ldots, y_{\upsilon})\prod
_{\sig=1}^{\upsilon}\,d\mu_{n_\sig
}(x_\sig,
y_\sig).
\end{equation}
Then
\[
D_{i,j}=\frac{{\upsilon}}{ij}\sum_{u=-\infty}^\infty
a_{i,j}(u, 2u,\ldots, {\upsilon}u),
\]
where
\[
a_{i,j}(0, 0,\ldots, 0)=\int A_{i,j}(x_1,\ldots, x_{\upsilon
},x_1,\ldots, x_{\upsilon}) \prod
_{\sigma=1}^{\upsilon}d\mu(x_\sigma)
\]
and the series for $D_{i,j}$ converges absolutely.
\end{proposition}

This is essentially a straightforward but long computation carried out in
a few steps, each one formulated as a lemma. We will first derive
some
uniform bounds on $D_{i,i}(N,t,t)$.
A key step is to get for any pair $i,j$ an estimate on
\[
b_{i,j}(n,l)=E[Y_{i,q_i(n)}Y_{j,q_j(l)}].
\]
If $ |n-l|\gg 1$, then either $q_i(n)$ or $q_j(l)$ will be much bigger than
all other $q_i(m)$ and $q_j(m)$, which together with the mean $0$
condition on
$F_i, F_j$ and estimates of Section \ref{sec3} will make then this expectation
small, as shown in the following result which will also be used later on.
%
\begin{lemma}\label{lem52} There exists a nonincreasing function
$h(m)\ge0$,
with\break  $\sum_{m=1}^\infty h(m)<\infty$, such that for any
$i,j=1,2,\ldots,\ell$,
%
\begin{equation}
\label{varbound0} \sup_{n,l\dvtx   s_{i,j}(n,l)\geq m}\bigl|b_{i,j}(n,l)\bigr|\le h(m),
\end{equation}
where $s_{i,j}(n,l)=\max(\hat s_{i,j}(n,l),\hat s_{j,i}(l,n))$ and
$\hat s_{i,j}(n,l)=\min(q_i(n)-q_j(l),  n)$.
Furthermore, there exists a constant $C>0$ such that for all $t\geq
s\geq0$
and $i=1,\ldots,\ell$,
%
\begin{equation}
\label{varbound} \sup_{N\ge1} E\bigl|\xi_{i,N}(t)-
\xi_{i,N}(s)\bigr|^2\le C (t-s).
\end{equation}
\end{lemma}
\begin{pf}
First, observe that for $i=1,\ldots,k$,
%
\begin{equation}
\label{s-est1} q_i(n)-q_{i-1}(n)=n \quad\mbox{and}\quad
s_{i,i}(n,l)=\min \bigl(i|n-l|,\max(n,l)\bigr) \geq|n-l|,\hspace*{-30pt}
\end{equation}
where in the first equality we set $q_0(n)=0$. On the other hand, if
$i\geq
k+1$, then it follows from (\ref{2q1})--(\ref{2q3}) that for any
$\ve>0$
there exists $n_\ve$ such that for all $n\geq n_\ve$ and $n>l\geq0$,
%
\begin{equation}
\label{s-est2} q_i(n)-q_{i-1}(n)\geq n+\ve^{-1},\qquad
q_i(n)-q_i(l)\geq n-l+\ve^{-1}
\end{equation}
and so
%
\begin{equation}
\label{s-est3} s_{i,i}(n,l)\geq\min\bigl(n-l+\ve^{-1},n\bigr)
\geq n-l.
\end{equation}

Now, assume that $q_i(n)-q_j(l)\geq0$ and $n\geq n_1$ so that we will use
here (\ref{s-est1})--(\ref{s-est3}) with $\ve=1$, while only in Proposition
\ref{covariance2} these\vspace*{1pt} estimates will be needed for all positive $\ve$.
Set $r=\frac13s_{i,j}(n,l)=\frac13\hat s_{i,j}(n,l)$.
If we replace $Y_{i,q_i(n)}$ and
$Y_{j,q_j(l)}$ by $Y_{i,q_i(n),r}$ and $Y_{j,q_j(l),r}$ defined at the
beginning of this section, then the difference between $b_{i,j}(n,l)$ and
\[
b^{(r)}_{i,j}(n,l)=E[ Y_{i,q_i(n),r}Y_{j,q_j(l),r}]
\]
can be estimated easily using Corollary \ref{cor}(iv) with $\cH_2=\cF$,
which gives
\[
\bigl|b^{(r)}_{i,j}(n,l)-b_{i,j}(n,l)\bigr|\le c(
\gamma_m, \gamma_{
{2p(\iota+1)}
/({2- p\alpha})}) \bigl[\beta(q,r)
\bigr]^\delta.
\]
On the other hand, by (\ref{s-est1}) and (\ref{s-est2}) we see that
in our
circumstances $\min(q_i(n)-q_j(l),q_i(n)-q_{i-1}(n))\geq\hat s_{i,j}(n,l)$,
and so by Corollary \ref{cor}(i),
\begin{eqnarray*}
\bigl|b^{(r)}_{i,j}(n,l)\bigr|&=&|EY_{i,q_i(n),r}Y_{j,q_j(l),r}|
\\
&=& \bigl\vert E \bigl[E [Y_{i,q_i(n),r}|\cF_{0,q_i(n)-r} ]Y_{j,q_j(l),r}
\bigr] \bigr\vert\\
&\le&\bigl\|F_j \bigl(X_r\bigl(q_1(l)
\bigr),\ldots, X_r\bigl(q_j(l)\bigr)\bigr)
\bigr\|_{L_2(P)}
\\
&&{}\times\bigl\|E[Y_{i,q_i(n),r}|\cF_{0,q_i(n)-r} ]\bigr\|_{L_2(P)} \\
&\le& C
\varpi_{q,p}\bigl(\tfrac13s_{i,j}(n,l)\bigr).
\end{eqnarray*}
We can always estimate $|b_{i,j}(n,l)|$ by $|b^{(r)}_{i,j}(n,l)-b_{i,j}(n,l)|
+|b^{(r)}_{i,j}(n,l)|$, so that
\[
\bigl|b_{i,j}(n,l)\bigr|\le C \bigl(\varpi_{q,p}\bigl(
\tfrac13s_{i,j}(n,l)\bigr)+ \bigl[\beta\bigl(q,\tfrac13s_{i,j}(n,l)
\bigr)\bigr)^\delta \bigr).
\]
Now, observe that if $n<n_1$ and $q_i(n)-q_j(l)\geq0$, then
\[
s_{i,j}(n,l)\leq L_1=\max_{n<n_1,  i\leq\ell}q_i(n)
\quad \mbox{and}\quad    l\leq n_1+L_1.
\]
Hence, in order to satisfy (\ref{varbound0}), we can take
\[
h(m)=\max_{0\leq n,l\leq n_1+L_1,  1\leq i,j\leq\ell}\bigl|b_{i,j}(n,l)\bigr|
\]
for $m\leq L_1$, while for $m>L_1$ we define
\[
h(m)=C \bigl(\varpi_{q,p}\bigl(\bigl[\tfrac13m\bigr]\bigr)+\bigl(
\beta\bigl(q,\bigl[\tfrac13m\bigr]\bigr)\bigr)^\delta \bigr).
\]

Finally, by (\ref{s-est1}) and (\ref{s-est3}) for $t\geq s\geq0$,
\begin{eqnarray*}
E\bigl[\bigl|\xi_{i,N}(t)-\xi_{i,N}(s)\bigr|^2\bigr]&\le&
\frac{1}{N} \biggl( \sum_{Ns\leq l\le Nt}
b_{i,i}(l,l)+2\mathop{\sum_{Ns\leq l\le Nt}}_{n\ge l+1}\bigl|b_{i,i}(n,l)\bigr|
\biggr)
\\
&\le&\frac{1}{N} \sum_{Ns\leq l\le Nt}
\biggl(EY_{i,l}^2+ 2\sum_{n\geq l+1}
h(n-l) \biggr)\le C t
\end{eqnarray*}
provided $N(t-s)\geq1$, and the result follows.
\end{pf}

Next, we will need a result which will be formulated in a somewhat more
general situation.
Let $H(x_1,x_2, \ldots, x_d)$ be a function on $(R^\nu)^d$ that is continuous
and satisfies the growth condition $|H(x_1,x_2,\ldots, x_d)|\le
1+\sum_i\|x_i\|^\iota$ for some $\iota\ge1$. Suppose that
$\{Y(n)\dvtx n\ge1\}$ is a stochastic process with values in $R^\nu$
and there exists an integer $m\geq1$ such that for any $l\le m$ the
distribution of $\{{Y(n_1),Y(n_2),\ldots, Y(n_{l})}\}$ depends only on
the spacings $\{n_i-n_{i-1}\},  l=2,\ldots,l$ between them. For $l\ge2$, we
denote this distribution by $\mu_{S}$,
where $S$ is a set of $l-1$ positive integers prescribing the spacings
between the $l$ integers. We assume that all $\{Y(n),  n\geq1\}$ have
a common distribution $\mu$ and that the integrability condition
$\int\|x\|^\iota \,d\mu<\infty$ holds true.
For some $p,q\geq1$ and a nested family of sub $\sigma$-fields $\cF_{m,n}$
as above assume the mixing condition
\[
\varpi_{q,p}(l)=\sup_{m-n\ge l} \varpi_{q,p}(
\cF_{-\infty,m},\cF_{n,\infty}) \to0 \qquad\mbox{as } l\to\infty
\]
and the localization condition
\[
\lim_{r\to\infty} \sup_n\bigl\|Y(n)-E\bigl[Y(n)|{\cF}_{n-r,n+r}
\bigr]\bigr\|_{L_1(P)}=0.
\]
Let $n_1<n_2<\cdots<n_d$ be a sequence of integers that tend to
$\infty$
with some of the gaps $\{n_{i+1}-n_i\}$ tending to infinity while
others are
kept fixed. This splits the set of integers $1,2,\ldots, d$ into a
partition $\cP$ consisting of blocks $B_j$ of different sizes. The
pairwise distances between integers in each block $B_j$ remain fixed
(so it can be viewed as rigid), while the distances between different
blocks tend to $\infty$. We assume that each block $B_j$ consists of
at most
$m$ integers. Let $m_j$ denote the number of integers in a block $B_j$
and $S_j$ denote the set of spacings in $B_j$, that is, the sequence of $m_j-1$
positive integers representing pairwise distances between successive integers
in $S_j$. Let the distribution $\mu_{\cP}$ on $(R^l)^d$ be the
product measure
\[
\mu_{\cP}=\Pi_j \mu_{S_j}
\]
over successive blocks.
%
\begin{lemma}\label{limcov} Assume that $\{n_j\}$ goes to infinity with
rigid blocks determined by $\cP$. Then
\[
\lim_{n_1,\ldots, n_d\to\infty}E\bigl[H\bigl(X(n_1),\ldots,
X(n_d)\bigr)\bigr]=\int H(x_1,\ldots,
x_d)\,d\mu_{\cP},
\]
where the limit is taken so that the sets $S_j$ of spacings in each block
$B_j$ remain fixed while the gaps between different blocks tend to infinity.
\end{lemma}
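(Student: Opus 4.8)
The plan is to factor the expectation over blocks using the mixing and localization hypotheses, and then identify the within-block limit with the product of the prescribed joint distributions. First I would reduce to the case where $H$ is bounded and uniformly continuous: using the growth bound $|H|\le 1+\sum_i\|x_i\|^\iota$ together with $\int\|x\|^\iota d\mu<\infty$ and the uniform-in-$n$ moment bound on $Y(n)$, a truncation argument (replacing $H$ by $H$ cut off at level $R$ and letting $R\to\infty$) controls the tail contribution uniformly in the indices $n_1,\dots,n_d$; I would invoke H\"older's inequality as in Lemma \ref{L5.0} for these estimates. Since $H$ is continuous, $H$ truncated is uniformly continuous and bounded, so it suffices to prove the claim for such $H$.

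Next I would handle the localization: replace each $Y(n_i)$ by $Y_r(n_i)=E[Y(n_i)\mid\cF_{n_i-r,n_i+r}]$ for a fixed but large $r$. Because $H$ is now uniformly continuous and bounded and $\|Y(n_i)-Y_r(n_i)\|_{L_1(P)}\to 0$ uniformly in $n_i$ as $r\to\infty$, the difference $|E[H(Y(n_1),\dots,Y(n_d))]-E[H(Y_r(n_1),\dots,Y_r(n_d))]|$ is small uniformly in the indices, for $r$ large. Now fix $r$. Once the gaps between distinct blocks $B_j$ exceed $2r$ — which eventually happens since those gaps tend to infinity — the random vectors $\{Y_r(n_i):i\in B_j\}$ associated to different blocks are measurable with respect to $\sigma$-fields $\cF_{a_j,b_j}$ whose index intervals are separated by more than the gap used in $\varpi_{q,p}$. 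Peeling off the blocks one at a time and using the definition \eqref{eq2.1} of $\varpi_{q,p}$ (telescoping: replace the joint expectation over blocks $1,\dots,j$ by the product of the expectation over block $j$ and the joint expectation over blocks $1,\dots,j-1$, incurring an error $O(\varpi_{q,p}(\text{gap}))$ each time, after a further H\"older step to control the $L^q$ norm of the block functionals via the moment bound on $Y_r$), we get
\[
E[H(Y_r(n_1),\dots,Y_r(n_d))]=\prod_j \big(\text{expectation of the }j\text{-th block functional}\big)+o(1),
\]
where the $o(1)$ is controlled by $\sum_j\varpi_{q,p}(\text{gap}_j)\to 0$. Here one must be a little careful that the block functional is not literally of the form $H$ restricted to a block; rather, after integrating out the other blocks against their (product) marginals, the factorization produces $\int H\,d(\otimes_j\nu_j^{(r)})$ where $\nu_j^{(r)}$ is the law of $\{Y_r(n_i):i\in B_j\}$, which depends only on the spacings $S_j$ by the stationarity hypothesis.

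Finally I would take $r\to\infty$ in this within-block identification. Since each block has at most $m$ integers and the joint law of any $\le m$ of the $Y(n)$'s depends only on spacings, the law $\nu_j^{(r)}$ of $\{Y_r(n_i):i\in B_j\}$ converges, as $r\to\infty$, to $\mu_{S_j}$ in a sense strong enough to pass to the limit in $\int H\,d(\otimes_j\nu_j^{(r)})$ (again using boundedness and uniform continuity of the truncated $H$, plus the $L^1$ convergence $Y_r(n_i)\to Y(n_i)$, which forces convergence in distribution of every finite marginal). Assembling the three approximations — truncation of $H$, localization at scale $r$, and block factorization — with the order of limits $\lim_{n_j\to\infty}$, then $r\to\infty$, then removing the truncation, yields $\int H\,d\mu_{\cP}$. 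The main obstacle is bookkeeping the uniformity: each of the three error terms must be shown small uniformly over all admissible configurations of the indices $n_1<\dots<n_d$ (with the prescribed rigid blocks) before any limit is taken, so that the limits can be interchanged; the mixing-plus-H\"older estimate in the block-peeling step, needing an $L^q$ bound on products of the $Y_r(n_i)$ uniformly in $n_i$, is where the moment hypothesis $\int\|x\|^\iota d\mu<\infty$ and the structure of $\varpi_{q,p}$ both have to be used with some care.
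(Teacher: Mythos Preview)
Your overall architecture---truncate $H$, localize $Y(n_i)$ to $Y_r(n_i)$, then peel off blocks using mixing---matches the paper. But the peeling step as you describe it has a real gap.

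The mixing coefficient $\varpi_{q,p}(\cG,\cH)$ controls $\|E[g\mid\cG]-E[g]\|_p$ for $\cH$-measurable $g$. When you ``peel off'' the last block you are implicitly claiming that
\[
E\big[H(Y_r^{(1)},Y_r^{(2)})\big]\;\approx\;E\big[h(Y_r^{(1)})\big],\qquad h(x)=E[H(x,Y_r^{(2)})],
\]
where $Y_r^{(1)}$ collects the earlier blocks and $Y_r^{(2)}$ the last one. Mixing gives you, for each \emph{fixed} $x$, the bound $\|E[H(x,Y_r^{(2)})\mid\cG]-h(x)\|_p\le \|H\|_\infty\,\varpi_{q,p}(\mathrm{gap})$. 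It does \emph{not} give you the bound after substituting the random $\cG$-measurable $Y_r^{(1)}$ for $x$: a pointwise-in-$x$ $L^p$ estimate on $G(x,\omega)=E[H(x,Y_r^{(2)})\mid\cG]-h(x)$ says nothing about $G(Y_r^{(1)}(\omega),\omega)$ unless you first upgrade it to a $\sup_x$ bound. Your ``further H\"older step to control the $L^q$ norm of the block functionals'' does not address this substitution problem, and with only $\alpha$-mixing ($\varpi_{\infty,1}$) one cannot simply compare joint and product laws in total variation for a general bounded $H$.

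The paper closes this gap by doing more than truncating: it approximates $H$ uniformly by a \emph{smooth} function with \emph{compact support} (say in a ball of radius $L$) and bounded gradient. The bounded gradient gives the elementary estimate
\[
\sup_{|x|\le L}|G_r(x,\omega)|\le C\int_{|x|\le L}|G_r(x,\omega)|\,dx,
\]
so that $E[\sup_x|G_r(x,\omega)|]\le C\,L^{d'}\sup_x\|G_r(x,\cdot)\|_1$, and \emph{now} the pointwise mixing bound (with $\varpi_{\infty,1}\le\varpi_{q,p}$) suffices. Your reduction to ``bounded and uniformly continuous'' $H$ is not enough for this step; you need Lipschitz (or at least a further uniform approximation by Lipschitz functions) together with compact support. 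Once you insert that smoothing, the rest of your outline goes through and coincides with the paper's argument.
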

\begin{pf}
First we note that because of the growth and integrability conditions we
can replace $H$ by $H\phi$, where $\phi$ is a continuous cutoff function
with compact support. The error is uniformly controlled on either side. We
can then approximate $H$ uniformly by a smooth function. In other
words, we
can assume without loss of generality that $H$ is a bounded continuous
function supported on some ball of radius $L$ with a bounded gradient.
We prove the lemma by reducing the number of blocks by one at each step.
The last gap that tends to $\infty$ cuts off a block $B=\{n_{d'+1},\ldots, n_d\} $ at the end with a rigid spacing $S$ between integers in
the block. We will show that
%
\begin{equation}
\label{reduction}
\mathop{\lim_{n_1,\ldots, n_d\to\infty}}_{\cP\ \mathrm{fixed}}E\bigl[{\hat H}\bigl(X(n_1),\ldots, X(n_d)\bigr)\bigr]=0,
\end{equation}
where
\begin{eqnarray*}
{\hat H}(x_1,x_2, \ldots, x_d)&=&H(x_1,x_2,
\ldots, x_d)
\\
&&{}-\int H(x_1,x_2, \ldots, x_{d'},x_{d'+1},\ldots, x_d) \,d\mu_S(x_{d'+1},\ldots,x_d).
\end{eqnarray*}
This will reduce the number of blocks by one, replacing $H$ by
\[
H_1(x_1,x_2, \ldots, x_{d'})=
\int H(x_1,x_2, \ldots, x_{d'},
x_{d'+1},\ldots, x_d ) \,d\mu_S(x_{d'+1},\ldots, x_d).
\]
The step by step reduction will end when only the first block $B_1$
with spacings $S_1$ remains and since it is rigid, we can integrate it out
with $\mu_{S_1}$ and end up with $\int H(x_1,\ldots, x_d)\,d\mu_{\cP}$,
which will complete the proof of the lemma.

The function $\hat H$ is also bounded with a bounded gradient.
Therefore,
\begin{eqnarray*}
&&
\bigl\|{\hat H}\bigl(X(n_1),\ldots, X(n_d)\bigr)-{
\hat H}\bigl(X_r(n_1),\ldots,X_r(n_d)
\bigr)\bigr\|
\\
&&\qquad\le C\sup_n\bigl\|X_r(n)-X(n)\bigr\|_{L_1(P)}\to0
\end{eqnarray*}
uniformly over all $n_1,\ldots, n_d$ as $r\to\infty$. To establish
(\ref{reduction}), it is therefore sufficient to prove that
%
\begin{equation}
\label{reduction2} \lim_{r\to\infty} \limsup_{n_1,\ldots, n_d\to\infty} E\bigl[{\hat
H}\bigl(X_r(n_1),\ldots, X_r(n_d)
\bigr)\bigr]=0.
\end{equation}
Observe that
\begin{eqnarray*}
E\bigl[{\hat H}\bigl(X_r(n_1),\ldots,
X_r(n_d)\bigr)\bigr]&=&E\bigl[E\bigl[{\hat H}
\bigl(X_r(n_1),\ldots, X_r(n_d)
\bigr)|\cF_{-\infty,n_{d'}+r}\bigr]\bigr]
\\
&=&E\bigl[G_r\bigl(X_r(n_1),\ldots,
X_r(n_{d'}),\omega\bigr)\bigr],
\end{eqnarray*}
where
\[
G_r(x_1,\ldots, x_{d'},\omega)=E\bigl[{
\hat H}\bigl(x_1,\ldots, x_{d'}, X_r(n_{d'+1}),\ldots,
X_r(n_d)\bigr)|\cF_{-\infty,n_{d'}+r}\bigr].
\]
To prove (\ref{reduction2}), it is clearly sufficient to show that
\[
\lim_{r\to\infty}E\Bigl[\sup_{x_1,\ldots, x_{d'}}\bigl|G_r(x_1,\ldots, x_{d'}, \omega)\bigr|\Bigr]=0.
\]
Since $\|\nabla_x G_r\|_\infty\le\|\nabla_x{\hat H}\|_\infty
\le
\|\nabla_x H\|_\infty$, there is a uniform bound on $\|\nabla G_r\| $.
We can therefore estimate
\[
\sup_{x_1,\ldots, x_{d'}}\bigl|G_r(x_1,\ldots, x_{d'},
\omega)\bigr|\le C\int\bigl|G_r(x_1,\ldots, x_{d'},
\omega)\bigr| \,dx_{1}\cdots dx_{d'}.
\]
Taking expectations and observing that $G_r$ vanishes outside a ball of
radius~$L$,
\[
E\sup_{x_1,\ldots, x_{d'}}\bigl|G_r(x_1,\ldots,
x_{d'},\omega)\bigr|\le C L^{d'} \sup_{x_1,\ldots, x_{d'}}E\bigl|G_r(x_1,\ldots, x_{d'},\omega)\bigr|.
\]
If $n_{d'+1}-n_{d'}>2r$, then by the definition (\ref{eq21}) of the
dependence coefficients~$\varpi$,
\begin{eqnarray*}
&&\sup_{x_1,\ldots, x_{d'}}\bigl\|G_r(x_1,\ldots,
x_{d'},\omega)-{\hat H}_r(x_1,\ldots,
x_{d'})\bigr\|_1\\
&&\qquad\le2 \varpi_{\infty,1}(n_{d'+1}-n_{d'}-2r)
\|H\|_\infty,
\end{eqnarray*}
where
\[
{\hat H}_r (x_1,\ldots, x_{d'})=E\bigl[{
\hat H}\bigl(x_1,\ldots, x_{d'}, X_r(n_{d'+1}),\ldots,
X_r(n_d)\bigr)\bigr],
\]
while
\[
{\hat H} (x_1,\ldots, x_{d'})=E\bigl[{\hat H}
\bigl(x_1,\ldots, x_{d'}, X(n_{d'+1}),\ldots,
X(n_d)\bigr)\bigr]\equiv0.
\]
Since $\hat H$ has a bounded gradient,
\begin{eqnarray*}
&& \bigl|E\bigl[{\hat H}\bigl(x_1,\ldots, x_{d'},
X(n_{d'+1}),\ldots, X(n_d)\bigr)\bigr]\\
&&\quad\hspace*{0pt}{} - E\bigl[{\hat H}
\bigl(x_1,\ldots, x_{d'},
X_r(n_{d'+1}),\ldots, X_r(n_d)
\bigr)\bigr]\bigr|\\
&&\qquad\le C\sup_n E\bigl|X(n)-X_r(n)\bigr| =\epsilon(r)
\to0 \qquad\mbox{as } r\to\infty.
\end{eqnarray*}
Taking into account that $\vp_{\infty,1}(l)\leq\vp_{p,q}(l)\to0$, the
lemma follows from the above estimates.
\end{pf}
%
\begin{lemma}\label{Lb2} For any $i,j\leq k$ and $s,t>0$ and integer
$u$, the
limit
%
\begin{equation}
\label{sum1} \lim_{N\to\infty}\frac{1}{N}
\mathop{\mathop{\sum_{0\le in \le Ns}}_{0\le jl \le
Nt}}_{
in-jl= u} b_{i,j}(n,l)=\frac{{\upsilon}\min(s,t)}{ij}
 c_{i,j}(u)
\end{equation}
exists where ${\upsilon}$ is the greatest common divisor of $i$
and $j$. For any multiple of ${\upsilon}$,
%
\begin{equation}
\label{c} c_{i,j}({\upsilon}u)=a_{i,j}(u,2u,\ldots,{\upsilon}u)
\end{equation}
with $a_{i,j}$ defined by (\ref{eqseven1}). If $u$ is not a multiple
of ${\upsilon}$,
then $c_{i,j}(u)=0$. Furthermore,
%
\begin{equation}
\label{sum2} \lim_{N\to\infty}\frac{1}{N}
\mathop{\sum_{0\le in\le Ns}}_{0\le jl\le Nt} b_{i,j}(n,l)=\frac{{\upsilon}\min(s,t)}{ij}\sum
_{-\infty<u <\infty
} c_{i,j}(u)
\end{equation}
and the series in the right-hand side converges absolutely.
\end{lemma}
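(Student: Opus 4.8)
The plan is to reduce everything to the counting of lattice points and to Lemma \ref{limcov}. First I would fix $i,j\le k$ with greatest common divisor $\up$, write $i=\up i'$, $j=\up j'$, and analyze the constraint $in-jl=u$. Since $\gcd(i,j)=\up$, this Diophantine equation has no solution in integers $n,l$ unless $\up\mid u$, giving $c_{i,j}(u)=0$ in that case. When $u=\up w$, the solution set is an arithmetic progression: $n=n_0+j'r$, $l=l_0+i'r$ for $r\in\bbZ$ and a particular solution $(n_0,l_0)$. Substituting into the ranges $0\le in\le Ns$, $0\le jl\le Nt$ shows that $r$ ranges over an interval of length $\sim \frac{N\min(s,t)}{\up i'j'}=\frac{N\min(s,t)\,\up}{ij}$, up to $O(1)$ endpoint corrections. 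Thus $\frac1N\sum_{in-jl=u,\ldots}b_{i,j}(n,l)$ is a Cesàro-type average of $b_{i,j}(n_0+j'r,\,l_0+i'r)$ over $r$ in a window whose length grows like $\frac{\up\min(s,t)}{ij}N$.

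The next step is to identify $\lim_{r\to\infty}b_{i,j}(n_0+j'r,l_0+i'r)$. Here the arguments of $F_i$ are $X(n),X(2n),\ldots,X(in)$ and of $F_j$ are $X(l),X(2l),\ldots,X(jl)$, with $in-jl=\up w$ held fixed as $n,l\to\infty$ along the progression. I would examine the pairwise gaps among the combined index set $\{\sig n:1\le\sig\le i\}\cup\{\tau l:1\le\tau\le j\}$. Two indices $\sig n$ and $\tau l$ stay at bounded distance precisely when $\sig n-\tau l$ is bounded along the progression; since $n,l\to\infty$ with $n/l\to j/i=j'/i'$, this forces $\sig/\tau=i/j$, i.e. $\sig=\up\sig'i'/(\ldots)$—concretely the bounded-distance pairs are exactly $\{(\sig' i)n\,,\,(\sig' j)l\}$ for $\sig'=1,\ldots,\up$ (using $\sig' i=\sig'\up i'\le i$ iff $\sig'\le\up$), at fixed distance $\sig' w$ because $(\sig' i)n-(\sig' j)l=\sig'(in-jl)=\sig'\up w$. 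All other gaps tend to infinity. So the combined index set splits into $\up$ rigid blocks of size $2$ (the pairs $(\sig' i\cdot n,\ \sig' j\cdot l)$) together with the remaining singleton indices, and the gaps between blocks diverge. Now Lemma \ref{limcov}, applied to $H$ equal to the product $F_i\cdot F_j$ viewed as a function of the combined family of variables (this $H$ satisfies the required growth bound by Lemma \ref{L2.1}(i) and \eqref{bgamma}), yields that $E[Y_{i,q_i(n)}Y_{j,q_j(l)}]$ converges to the integral of $F_iF_j$ against the product measure $\mu_{\cP}$, where each size-$2$ block $(\sig' i\cdot n,\ \sig' j\cdot l)$ contributes a factor $d\mu_{\sig' w}$ and each singleton a factor $d\mu$. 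Integrating out the singleton variables first reproduces exactly $A_{i,j}$, and then integrating the paired variables against $\prod_{\sig=1}^{\up}d\mu_{\sig w}$ reproduces $a_{i,j}(w,2w,\ldots,\up w)$. Hence $\lim_{r\to\infty}b_{i,j}(n_0+j'r,l_0+i'r)=a_{i,j}(w,2w,\ldots,\up w)$, and by Cesàro convergence the limit in \eqref{sum1} equals $\frac{\up\min(s,t)}{ij}a_{i,j}(w,2w,\ldots,\up w)$, i.e. $c_{i,j}(\up w)=a_{i,j}(w,2w,\ldots,\up w)$, which is \eqref{c}.

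For \eqref{sum2} I would sum \eqref{sum1} over $u$. The interchange of the limit $N\to\infty$ with the sum over $u$ is justified by dominated convergence using Lemma \ref{lem5.2}: by \eqref{varbound0}, $|b_{i,j}(n,l)|\le h(s_{i,j}(n,l))$ with $h$ summable, and by \eqref{s-est1} one has $s_{i,i}(n,l)\ge|n-l|$ and more generally $s_{i,j}(n,l)$ is comparable to a quantity controlling $|in-jl|$; grouping the terms with $in-jl=u$ shows $\frac1N\sum_{in-jl=u}|b_{i,j}(n,l)|\le C(\min(s,t)+1)\,\tilde h(|u|)$ for a summable $\tilde h$, uniformly in $N$. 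Hence the double sum converges absolutely and uniformly in $N$, the limit passes inside, and we get $\lim_N \frac1N\sum b_{i,j}(n,l)=\frac{\up\min(s,t)}{ij}\sum_{u}c_{i,j}(u)$ with absolute convergence of the series. The main obstacle is the bookkeeping in the middle step: correctly identifying which of the $i+j$ indices pair up into rigid blocks and at what fixed spacings, and checking that after integrating out the free variables one lands precisely on the definitions \eqref{eqseven.1} of $a_{i,j}$ rather than some permuted or rescaled variant; the lattice-point counting and the dominated-convergence bound are routine once Lemma \ref{lem5.2} is in hand.
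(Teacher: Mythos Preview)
Your approach is essentially identical to the paper's: both reduce to Lemma~\ref{limcov} for the termwise limit of $b_{i,j}(n,l)$, count lattice points for the density factor $\up\min(s,t)/(ij)$, and invoke Lemma~\ref{lem5.2} for the passage from \eqref{sum1} to \eqref{sum2}.

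However, your identification of the rigid pairs contains precisely the bookkeeping slip you anticipated. With $i=\up i'$, $j=\up j'$ and $\gcd(i',j')=1$, the condition that $\sigma n-\tau l$ stay bounded along $i'n-j'l=w$ forces $\sigma j'=\tau i'$, hence $\sigma=\sigma' i'$ and $\tau=\sigma' j'$ for $\sigma'=1,\ldots,\up$. The pairs are therefore $(\sigma' i'\, n,\ \sigma' j'\, l)$, not $(\sigma' i\, n,\ \sigma' j\, l)$ as you wrote; your parenthetical check ``$\sigma' i=\sigma'\up i'\le i$ iff $\sigma'\le\up$'' is false (it gives $\sigma'\le 1$), and your spacing computation $\sigma'(in-jl)=\sigma'\up w$ contradicts the final answer $a_{i,j}(w,2w,\ldots,\up w)$ that you correctly state. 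The right spacing is $\sigma' i' n-\sigma' j' l=\sigma'(i'n-j'l)=\sigma' w$, which is exactly what the paper obtains (with $m$ in place of $\sigma'$ and $\alpha,\beta$ in place of $i',j'$). Once this is corrected your argument goes through.
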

\begin{pf}
It is clear that if $u$ is not a multiple of ${\upsilon}$, there are
no solutions
of the equation
$in'-jl'=u$, so we can replace $u$ by ${\upsilon}u$. Combining the
indices $n, 2n,\ldots, in$ and $l,2l,\ldots,jl$\vadjust{\goodbreak} and ordering them into a single sequence,
we obtain by employing Lemma \ref{limcov} that
\begin{eqnarray*}
&&
\mathop{\lim_{n,l\to\infty}}_{in-jl={\upsilon}u}b_{i,j}(n,l)\\
&&\qquad=
\mathop{\lim_{n,l\to\infty}}_{in-jl={\upsilon}u}E
\bigl[F_i\bigl(X(n), X(2n),\ldots,X(in)\bigr)
F_j\bigl(X(l),X(2l),\ldots, X(jl)\bigr)
\bigr]\\
&&\qquad=a_{i,j}(u,2u,\ldots,{\upsilon}u).
\end{eqnarray*}
If ${\upsilon}$ is the greatest common divisor of
$i$ and $j$, then $i={\upsilon}\alpha$ and $j={\upsilon}\beta$ with
$\alpha$ and
$\beta$ being coprime. Since all the gaps in either sequence above go to
$\infty$, we can have blocks of size more than one only by pairing two
members from different sequences and, therefore, the rigid blocks of
Lemma \ref{limcov} can be of
size one and two only. If we start with $(n,l)$ such that $\alpha n-
\beta l=u$, their multiples $(\alpha m n,\beta m l)$,
$m=1,\ldots,{\upsilon}$, with
$\alpha m n-\beta m l =mu$ will give ${\upsilon}$ blocks of size $2$.
There cannot be any other. Indeed, if $(a,b)$ is a pair of integers which is not an
integer multiple of $(\alpha, \beta)$, then taking into account that
$\al$ and
$\be$ are coprimes, we conclude that $|an-bl|\to\infty$ when $n\to
\infty$,
preserving $\al n-\be l=u$ fixed. To complete the proof of
the lemma, we need to count the number of integer solutions of $in-jl=
{\upsilon}u$ or $\alpha n-\beta l=u$ with $\alpha{\upsilon}n\le Nt$
and $\beta
{\upsilon}l\le Ns$. The set of solutions for any $u$ is obtained by shifting
the set of solutions of the homogeneous equation $\alpha n-\beta l=0$
by a fixed solution of the above nonhomogeneous one. Therefore, with
our constraints their numbers can differ at most by a constant. In
the homogeneous case
the solutions are precisely those $m=in=jl$ that are multiples of
${\upsilon}\alpha\beta$. Their number is an integral value of $\frac{N
\min\{t,s\}}{{\upsilon}\alpha\beta}=\frac{N {\upsilon}\min\{s,t\}
}{ ij}$.
This\vspace*{1pt} proves (\ref{sum1}), while Lemma \ref{lem52} and (\ref{sum1})
imply (\ref{sum2}).
\end{pf}

Finally, we turn to $\xi_{i,N}(t)$ with $k+1\le i\le\ell$. We will see
in the next section that, in fact, their limits in distribution
$\{\eta_i(\cdot); i\ge k+1\}$ are mutually independent processes which
are also independent of the processes $\{\eta_i(\cdot); 1\le i\le k\}$,
but here we deal only with their variances and covariances.
%
\begin{proposition}\label{covariance2} For $i\ge k+1$,
%
\begin{eqnarray}
\label{cov,ii}
&&
\lim_{N\to\infty}E \bigl(\xi_{i,N}(s)
\xi_{i,N}(t) \bigr)
\nonumber\\[-8pt]\\[-8pt]
&&\qquad=\min(s,t)\int \bigl(F_i(x_1, x_2,\ldots,
x_i) \bigr)^2 \,d\mu(x_1)\,d\mu(x_2)
\cdots d\mu(x_i).
\nonumber
\end{eqnarray}
Moreover, for any $t,s$ and $j<i$, $i>k$,
%
\begin{equation}
\label{cov,ij} \lim_{N\to\infty}E \bigl(\xi_{i,N}(t)
\xi_{j,N}(s) \bigr)=0.
\end{equation}
\end{proposition}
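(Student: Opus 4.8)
The plan is to treat the two assertions by the same mechanism used in Section \ref{sec4}: write the relevant covariance as a normalized double sum $\frac1N\sum_{n,l} b_{i,j}(n,l)$, use Lemma \ref{lem5.2} to discard all but the "near-diagonal'' terms $|n-l|$ bounded, and then identify the limit of the surviving terms via Lemma \ref{limcov}. The essential new point, relative to the case $i,j\le k$ treated in Lemma \ref{Lb2}, is that for $i\ge k+1$ the growth hypotheses (\ref{2q.1})--(\ref{2q.3}) force \emph{all} the gaps among $\{q_1(n),\ldots,q_i(n)\}\cup\{q_1(l),\ldots,q_j(l)\}$ to tend to infinity as soon as $|n-l|\to\infty$ or, in the diagonal case $i=j$, even when $n=l$ for the arguments $q_{k+1}(n),\ldots,q_i(n)$; consequently \emph{no} rigid blocks of size $\ge 2$ can form and $\mu_{\cP}$ is just the full product measure $\mu\times\cdots\times\mu$.

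First I would prove (\ref{cov,ii}). Write
\[
E\big[\xi_{i,N}(s)\xi_{i,N}(t)\big]=\frac1N\sum_{1\le n\le Ns}\sum_{1\le l\le Nt}b_{i,i}(n,l).
\]
By Lemma \ref{lem5.2}, $|b_{i,i}(n,l)|\le h(s_{i,i}(n,l))$ with $s_{i,i}(n,l)\ge |n-l|$ (using (\ref{s-est3})) and $\sum h<\infty$, so for any $M$ the contribution of the terms with $|n-l|>M$ is $O\!\big(\sum_{m>M}h(m)\big)$ uniformly in $N$, hence negligible as $M\to\infty$. For the terms with $|n-l|\le M$ fixed, combine the arguments $X(q_1(n)),\ldots,X(q_i(n)),X(q_1(l)),\ldots,X(q_i(l))$ into one sequence; by (\ref{2q.1})--(\ref{2q.3}) (now genuinely needed for all $\epsilon>0$, as flagged in the proof of Lemma \ref{lem5.2}) every gap in this sequence tends to $\infty$, so Lemma \ref{limcov} applies with the partition $\cP$ consisting only of singletons. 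When $l\ne n$ this yields $b_{i,i}(n,l)\to \int F_i\,d\mu^{\otimes i}\cdot\int F_i\,d\mu^{\otimes i}=0$ because $F_i$ has $\mu$-mean zero in its last variable by (\ref{F0}); when $l=n$ it yields $b_{i,i}(n,n)\to\int |F_i(x_1,\ldots,x_i)|^2 d\mu(x_1)\cdots d\mu(x_i)=:D_{i,i}$. (Here one also uses the growth bound from Lemma \ref{L2.1}(i) plus the moment condition $\gamma_{2q\iota}<\infty$ of Assumption \ref{Hyp2} to justify the uniform integrability in Lemma \ref{limcov}, exactly as in the cutoff step there.) Hence only the diagonal $l=n$ survives, and $\frac1N\#\{n: n\le N\min(s,t)\}\to\min(s,t)$, giving (\ref{cov,ii}).

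For (\ref{cov,ij}) with $j<i$ and $i\ge k+1$, the same reduction applies: $|b_{i,j}(n,l)|\le h(s_{i,j}(n,l))$ and, since $i\ge k+1$, $q_i(n)$ outgrows every $q_{i'}(m)$, $i'<i$, so in fact $s_{i,j}(n,l)\to\infty$ whenever $\max(n,l)\to\infty$; more precisely $\hat s_{i,j}(n,l)=\min(q_i(n)-q_j(l),n)$ and $q_i(n)-q_j(l)\to\infty$ by (\ref{2q.3}) regardless of the relation between $n$ and $l$. Thus the double sum over $\{(n,l): n\le Ns', l\le Nt'\}$ — after the $|n-l|\le M$ truncation which is all that matters — consists of terms in which again all gaps diverge, so Lemma \ref{limcov} gives $b_{i,j}(n,l)\to \int F_i\,d\mu^{\otimes i}\cdot\int F_j\,d\mu^{\otimes j}=0$ by (\ref{F0}). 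Since each such term tends to $0$ and there are $O(N)$ of them with summable bounds, dominated convergence for the normalized sum gives (\ref{cov,ij}).

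The main obstacle is bookkeeping the block structure of Lemma \ref{limcov} correctly: one must verify that under (\ref{2q.1})--(\ref{2q.3}) the indices $q_{k+1}(n),\ldots,q_i(n)$ separate from each other \emph{and} from $q_1(n),\ldots,q_k(n)$ and from all of the $l$-indices, uniformly as the free parameter (either $n-l$ in the off-diagonal case or $n$ itself in the diagonal case) goes to infinity — this is precisely where the strict increase of the $q_i$'s and the quantitative gap condition (\ref{2q.2}) with arbitrary $\epsilon$ must be invoked, and it is the point the proof of Lemma \ref{lem5.2} deliberately deferred. Once that separation is established, the identification of the limit is immediate from the mean-zero property (\ref{F0}), and the interchange of limit and sum is exactly the argument already carried out in Lemma \ref{Lb2} via the summable majorant $h$.
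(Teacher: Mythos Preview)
Your treatment of (\ref{cov,ii}) follows the paper's structure (truncate at $|n-l|\le M$ via $s_{i,i}(n,l)\ge|n-l|$, show the remaining off-diagonal terms vanish, identify the diagonal via Lemma~\ref{limcov}), but the claim that for fixed $1\le|n-l|\le M$ ``every gap tends to $\infty$'' so that $\cP$ consists only of singletons is wrong: for $\sigma\le k$ the gap $q_\sigma(n)-q_\sigma(l)=\sigma(n-l)$ stays bounded, producing rigid size-two blocks. The conclusion $b_{i,i}(n,l)\to 0$ survives---either because the last coordinate $x_i$ (with $i>k$) \emph{is} a singleton so (\ref{F0}) annihilates the integral, or, as the paper does, by avoiding Lemma~\ref{limcov} here altogether: (\ref{s-est3}) with arbitrary $\ve>0$ gives $s_{i,i}(n,l)\ge\min(|n-l|+\ve^{-1},\max(n,l))\to\infty$ whenever $|n-l|\ge 1$, so $|b_{i,i}(n,l)|\le h(s_{i,i}(n,l))\to 0$ directly.

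The genuine gap is in (\ref{cov,ij}). Your assertion that $s_{i,j}(n,l)\to\infty$ whenever $\max(n,l)\to\infty$, ``regardless of the relation between $n$ and $l$'', is false: for $j\le k$ take $l=\lfloor q_i(n)/j\rfloor$; then $q_i(n)-q_j(l)\in[0,j)$ while $n,l\to\infty$, and one checks $s_{i,j}(n,l)\in[0,j)$ stays bounded. Hence neither your $|n-l|\le M$ truncation nor a dominated-convergence argument on the full double sum is available. The paper handles this by a different decomposition: split $E[\xi_{i,N}(t)\xi_{j,N}(s)]$ at $n=\ve NT$, bound the small-$n$ piece $|E[\xi_{i,N}(\ve T)\xi_{j,N}(s)]|$ by Cauchy--Schwarz and the variance estimate (\ref{varbound}) as $O(\sqrt\ve)$, and for $n\ge\ve NT$ observe that $l\le NT$ forces $n\ge\ve l$, so (\ref{2q.3}) gives $q_i(n)-q_j(l)\ge q_i(\ve l)-q_{i-1}(l)\to\infty$ and then $s_{i,j}(n,l)\ge\min(n-\ve NT+\ve^{-1},n)$, making the tail of $h$ control this piece uniformly in $l$. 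Letting first $N\to\infty$ and then $\ve\to 0$ finishes. The mechanism you are missing is that all bad pairs have $n=o(N)$, and it is precisely the Cauchy--Schwarz step against (\ref{varbound}) that disposes of them.
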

\begin{pf}
It follows from (\ref{s-est3}) that
\[
s_{i,i}(n,l)\geq\min\bigl(|n-l|+\ve^{-1},\max(n,l)\bigr)
\qquad\mbox{if }\max(n,l) \geq n_\ve  \mbox{ and } n\ne l
\]
and so, by (\ref{varbound0}),
\[
b_{i,i}(n,l)\to0   \qquad\mbox{as }   \max(n,l)\to\infty\qquad
\mbox{so that } |n-l|\geq1.
\]
Therefore, for any fixed $L\geq n_1$,
\begin{eqnarray*}
&&
\limsup_{N\to\infty}\frac1N \sum_{1\leq n,l\leq TN,n\ne
l}\bigl|b_{i,i}(n,l)\bigr|\\
&&\qquad\le 2T\sum_{m\geq L}h(m)
+\limsup_{N\to\infty}\frac1N\mathop{\sum_{1\le|n-l|\le L}}_{n,l\le TN}\bigl|b_{i,i}
(n,l)\bigr|\\
&&\qquad= 2T\sum_{m\geq L}h(m).
\end{eqnarray*}
We now let $L\to\infty$ and since $\sum_m h(m)<\infty$, it follows that
$\limsup$ in the left-hand side above equals zero, that is, the off-diagonal
terms do not contribute in (\ref{cov,ii}). It remains to deal with
the diagonal terms $b_{i,i}(n,n)$. Since $q_j(n)-q_{j-1}(n)\to\infty$
for $j=2,3,\ldots,\ell$ as $n\to\infty$, it follows from Lem\-ma~\ref{limcov}
that
%
\begin{equation}
\label{xi5} \lim_{n \to\infty}b_{i,i}(n,n)=\int \bigl(F_i(x_1,\ldots,x_i)
\bigr)^2\,d\mu(x_1)\cdots d\mu(x_i),
\end{equation}
proving (\ref{cov,ii}).

Next, we deal with (\ref{cov,ij}). Relying on Lemma \ref{lem52}, we can
estimate for any $\ve>0$,
%
\begin{eqnarray}
\label{crosscov}\quad &&\bigl|E\xi_{i,N}(t)\xi_{j,N}(s)\bigr|
\nonumber\\
&&\qquad\leq\bigl|E\xi_{i,N}(\ve T)\xi_{j,N}(s)\bigr| +\bigl|E \bigl(
\xi_{i,N}(t)-\xi_{i,N}(\ve T) \bigr)\xi_{j,N}(s)\bigr|
\nonumber\\[-8pt]\\[-8pt]
&&\qquad\leq \bigl(E\xi^2_{i,N}(\ve T) \bigr)^{1/2}
\bigl( E\xi^2_{j,N}(s) \bigr)^{1/2} +\frac1N\sum
_{\ve NT\leq n\leq NT,1\leq l\leq
NT}\bigl|b_{i,j}(n,l)\bigr|
\nonumber
\\
&&\qquad\le CT\sqrt{\ve} +\frac1N\sum_{\ve NT\leq n\leq NT,1\leq l\leq NT} h
\bigl(s_{i,j}(n,l)\bigr).
\nonumber
\end{eqnarray}
Since $i>j$ and $i>k$, then, by (\ref{2q3}), we can choose
$N(\ve)>\ve^{-1}T^{-1}n_\ve$ such
that $q_i(n)-q_j(l)>\ve^{-1}$ whenever $N\geq N(\ve),  n\geq\ve
NT,
l\leq NT$ and, moreover, by (\ref{s-est2}),
\begin{eqnarray*}
s_{i,j}(n,l)&=&\min\bigl(q_i(n)-q_j(l),n
\bigr)
\\
&\geq&\min\bigl(q_i(n)-q_i(\ve NT)+\ve^{-1},n
\bigr)\geq\min\bigl(n-\ve NT+\ve^{-1},n\bigr).
\end{eqnarray*}
Hence,
\[
\frac1N\sum_{\ve NT\leq n\leq NT,1\leq l\leq NT}h\bigl(s_{i,j}(n,l)
\bigr)\leq T\sum_{m\geq\min(\ve^{-1},\ve NT)}h(m)
\]
and letting, first, $N\to\infty$ and then $\ve\to0$, we derive
(\ref{cov,ij})
from (\ref{crosscov}).
\end{pf}

\section{Proof of the main theorem}\label{sec5}

The proof of Theorem \ref{MainThm} relies on martingale approximations and
martingale limit theorems, but we will need several modifications in our
situation.
We begin with the following result which can be found in various forms in
the literature (see, e.g., Section 2 in Chapter VIII of \cite
{JS} and
close versions in Theorem 18.2 in \cite{Bi} and Theorem 4.1 in \cite{HH}).
For each $N$ let $\cG_{N,n},  n=1,2,\ldots\,$, be a filtration of $\sigma$-algebras
and let $\{U_{N,n}\dvtx  n\ge1\}$ be a triangular array of random variables
satisfying the following conditions:

B1. For every $N$, $\{U_{N,n}\}$ is adapted to some $(\Omega_N,
\cG_{N,n}, P_N),  n=1,2,\ldots\,$;

B2. $\{U_{N,n}\}$ are uniformly square integrable;

B3. $\|E[U_{N,m}|\cG_{N, n}]\|_2\le c(m-n)$ for all $N$, $n\le
m$ and
for some sequence $c(k)$ satisfying $\sum_{k=0}^\infty c(k)=C<\infty$;

B4. For some increasing function $A(t)$,
\[
\lim_{N\to\infty} \biggl\|\frac{1}{N}\sum_{1\le n\le Nt}
W_{N,n}^2-A(t)\biggr\|_{L_1(P)} =0,
\]
where
\[
W_{N,n}=U_{N,n}+ \sum_{m\ge n+1}E[U_{N,m}|
\cG_{N,n}]- \sum_{m\ge n}E[U_{N,m} |
\cG_{N,n-1}].
\]

Observe that $W_{N,n},  n\geq1$ is a martingale differences sequence
provided B1--B3 hold true.
%
\begin{theorem}\label{TriCLT} Under assumptions \textup{B1--B4},
\[
\xi_N(t)=\frac{1}{\sqrt N}\sum_{1\le n\le Nt}U_{N,n}
\]
converges in distribution on $D[[0,T]; R]$ to a Gaussian process $\xi
(t)$ with
independent increments such that $\xi(t)-\xi(s)$ has mean $0$ and
variance \mbox{$A(t)-A(s)$}.
\end{theorem}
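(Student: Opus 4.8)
The plan is to prove Theorem \ref{TriCLT} by the standard martingale CLT machinery applied to the array $\{W_{N,n}\}$, after showing that replacing $U_{N,n}$ by $W_{N,n}$ is harmless. First I would observe that under {\bf B1}--{\bf B3} the sums $V_{N,n}:=\sum_{m\ge n}E[U_{N,m}|\cG_{N,n-1}]$ converge in $L_2$ (the tail is bounded by $\sum_{k\ge 0}c(k)<\infty$ via {\bf B3} and Minkowski), so $W_{N,n}=U_{N,n}+V_{N,n+1}-V_{N,n}$ is well defined, and $E[W_{N,n}|\cG_{N,n-1}]=0$ because $E[U_{N,n}|\cG_{N,n-1}]+E[V_{N,n+1}|\cG_{N,n-1}]-V_{N,n}=V_{N,n}-V_{N,n}=0$; thus $\{W_{N,n}\}_n$ is a martingale difference array for each fixed $N$. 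The telescoping identity then gives
\[
\frac{1}{\sqrt N}\sum_{1\le n\le Nt}U_{N,n}=\frac{1}{\sqrt N}\sum_{1\le n\le Nt}W_{N,n}+\frac{1}{\sqrt N}\big(V_{N,1}-V_{N,[Nt]+1}\big),
\]
and the correction term is $O_{L_2}(N^{-1/2})$ uniformly in $t$ by {\bf B3}, hence converges to zero uniformly; so it suffices to prove the FCLT for the martingale part $M_N(t):=N^{-1/2}\sum_{1\le n\le Nt}W_{N,n}$.

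Next I would verify the hypotheses of a functional martingale CLT (e.g.\ Theorem VIII.2.x in \cite{JS}, or the Lindeberg--type statement of \cite{HH}) for $M_N$. There are two ingredients: (a) convergence of the predictable (or optional) quadratic variation, and (b) a conditional Lindeberg / asymptotic negligibility condition. For (a), the quadratic variation of $M_N$ at time $t$ is $N^{-1}\sum_{1\le n\le Nt}W_{N,n}^2$, which converges in $L_1(P)$ to the deterministic increasing function $A(t)$ by assumption {\bf B4}; since the limit is deterministic this also gives convergence of the predictable bracket. For (b), I would deduce the conditional Lindeberg condition $N^{-1}\sum_{1\le n\le Nt}E[W_{N,n}^2\mathbf 1_{\{|W_{N,n}|>\varepsilon\sqrt N\}}\mid\cG_{N,n-1}]\to 0$ from uniform square integrability of $\{W_{N,n}\}$: uniform square integrability of $\{U_{N,n}\}$ in {\bf B2} together with the $L_2$ bound on $V_{N,n+1}-V_{N,n}$ coming from {\bf B3} transfers to $\{W_{N,n}\}$, and then the individual terms $N^{-1/2}W_{N,n}$ are uniformly asymptotically negligible, which combined with the bound on the bracket from (a) yields the Lindeberg condition by the usual truncation argument.

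With (a) and (b) in hand, the functional martingale CLT gives that $M_N(\cdot)$ converges in distribution on $D[[0,T];\bbR]$ to a continuous Gaussian martingale with deterministic bracket $A(t)$, i.e.\ to a process with independent increments, mean $0$, and $\mathrm{Var}(\xi(t)-\xi(s))=A(t)-A(s)$; since $M_N$ and $\xi_N$ differ by a term going to $0$ uniformly, the same limit holds for $\xi_N$, proving the theorem. I expect the main technical obstacle to be (b), the asymptotic negligibility/Lindeberg step: one must make sure that {\bf B2} (stated for the $U_{N,n}$) genuinely propagates to uniform square integrability of the $W_{N,n}$ despite the infinite sums defining $V_{N,n}$, and that the pointwise convergence in {\bf B4} is strong enough (it is, being $L_1$) to run the truncation estimate. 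The telescoping reduction and the bracket convergence are essentially bookkeeping; the care needed is in the interchange of limits and in citing the precise form of the martingale invariance principle whose hypotheses match {\bf B1}--{\bf B4}.
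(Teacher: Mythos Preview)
Your approach is correct and is essentially the standard argument behind the references the paper cites; note that the paper itself does \emph{not} prove Theorem~\ref{TriCLT} but simply invokes it as a known result from \cite{JS}, \cite{Bi}, \cite{HH}. Your reduction via the Gordin-type telescoping $W_{N,n}=U_{N,n}+V_{N,n+1}-V_{N,n}$ and subsequent appeal to the functional martingale CLT with bracket convergence from {\bf B4} and Lindeberg from {\bf B2} is exactly what those references do.

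The one point you rightly flag deserves a sharper remark: uniform square integrability of $U_{N,n}$ together with a mere $L_2$ bound on $V_{N,n+1}-V_{N,n}$ does \emph{not} immediately give uniform square integrability of $W_{N,n}$, since an infinite $L_2$-convergent sum of uniformly square integrable terms need not be uniformly square integrable. The clean way around this is to truncate the sum defining $V_{N,n}$ at level $L$, i.e.\ set $V^{(L)}_{N,n}=\sum_{n\le m<n+L}E[U_{N,m}\mid\cG_{N,n-1}]$; then $W^{(L)}_{N,n}$ is a finite sum of conditional expectations of uniformly square integrable variables, hence uniformly square integrable, and {\bf B3} gives $\sup_{N,n}\|V_{N,n}-V^{(L)}_{N,n}\|_2\le\sum_{k\ge L}c(k)\to 0$ as $L\to\infty$. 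Applying the martingale CLT to $W^{(L)}_{N,n}$ and letting $L\to\infty$ (the tail contributing $o(1)$ uniformly in $N$ via Doob's inequality) closes the gap. With this refinement your sketch is complete.
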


We need, however, to strengthen the theorem a little bit in our
context. First
we note that the condition B4 can be replaced by the weaker condition
%
\begin{equation}
\label{b4=b5} \lim_{N\to\infty} \frac{1}{N}\sum
_{1\le n\le Nt} E\bigl[W_{N,n}^2\bigr]=A(t)
\end{equation}
as can be seen from the following result.
%
\begin{lemma}
If for a fixed $l$ the random variables
\[
V_{N,r}=\biggl(\sum_{r(l-1)+1\le
n\le rl} U_{N,n}\biggr)^2
\]
satisfy a uniform law of large numbers in the
sense that
\[
\lim_{r\to\infty}\sup_N \sup_n E \Biggl[\Biggl|
\frac{1}{r} \sum_{j=1}^r \bigl[
V_{N,n+j} -E[V_{N,n+j}] \bigr]\Biggr| \Biggr]=0,
\]
then (\ref{b4=b5}) implies \textup{B4}.
\end{lemma}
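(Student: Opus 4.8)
The plan is to derive \textbf{B4} from $(\ref{b4=b5})$ by a blocking argument: group the summands of $\frac1N\sum_{1\le n\le Nt}W_{N,n}^2$ into blocks of length $l$, invoke the hypothesised uniform law of large numbers to replace the blockwise contribution by its mean, and control the errors with \textbf{B3}. Since $(\ref{b4=b5})$ already gives $\frac1N\sum_{1\le n\le Nt}E[W_{N,n}^2]\to A(t)$, it suffices to prove that $\frac1N\sum_{1\le n\le Nt}\big(W_{N,n}^2-E[W_{N,n}^2]\big)\to 0$ in $L^1(P)$, and then \textbf{B4} follows; one could equally note that the left side of \textbf{B4} is nonnegative with convergent means, so convergence in probability would already upgrade to $L^1(P)$ by a Scheff\'e-type argument.

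First I would record the telescoping identity: writing $h_{N,n}=\sum_{m\ge n+1}E[U_{N,m}\,|\,\cG_{N,n}]$, one checks directly from the definition of $W_{N,n}$ that $W_{N,n}=U_{N,n}+h_{N,n}-h_{N,n-1}$, where \textbf{B3} gives $\|h_{N,n}\|_2\le\sum_{k\ge1}c(k)=C$ uniformly in $N,n$ and \textbf{B2} gives $K':=\sup_{N,n}\|U_{N,n}\|_2<\infty$. Now fix $l$, cut $\{1,\dots,[Nt]\}$ into $R=R(N)\sim Nt/l$ successive blocks $B_r$ of length $l$ together with a remainder set $R_N$ of at most $l$ indices, and set $\bar U_{N,r}=\sum_{n\in B_r}U_{N,n}$, $\bar W_{N,r}=\sum_{n\in B_r}W_{N,n}$; the telescoping identity gives $\bar W_{N,r}=\bar U_{N,r}+\Delta_{N,r}$ with $\|\Delta_{N,r}\|_2\le 2C$. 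Expanding $\sum_{n\in B_r}W_{N,n}^2=\bar W_{N,r}^2-2\Xi_{N,r}$, with $\Xi_{N,r}=\sum_{n,m\in B_r:\,n<m}W_{N,n}W_{N,m}$, and $\bar W_{N,r}^2=V_{N,r}+2\bar U_{N,r}\Delta_{N,r}+\Delta_{N,r}^2$, and summing over $r$,
\begin{align*}
\frac1N\sum_{1\le n\le Nt}W_{N,n}^2 = &\ \frac1N\sum_r V_{N,r}+\frac2N\sum_r\bar U_{N,r}\Delta_{N,r}+\frac1N\sum_r\Delta_{N,r}^2\\
&-\frac2N\sum_r\Xi_{N,r}+\frac1N\sum_{n\in R_N}W_{N,n}^2 .
\end{align*}

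Then I would estimate the five terms for fixed $l$ and $N\to\infty$. From \textbf{B3}, $\|\bar U_{N,r}\|_2^2=\sum_{n,m\in B_r}E[U_{N,n}U_{N,m}]\le lK'^2+2K'lC$, so $\|\bar U_{N,r}\|_2=O(\sqrt l)$; hence the two $\Delta$-terms have $L^1(P)$-norm $O(t\,l^{-1/2})+O(t\,l^{-1})$ uniformly in $N$, the remainder term contributes $O(l/N)\to0$, and the same computation shows the deterministic discrepancy $\limsup_{N}\big|\frac1N\sum_r E[V_{N,r}]-\frac1N\sum_{1\le n\le Nt}E[W_{N,n}^2]\big|=O(t\,l^{-1/2})$, so by $(\ref{b4=b5})$ the quantity $\frac1N\sum_r E[V_{N,r}]$ lies within $O(l^{-1/2})$ of $A(t)$ in the limit. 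For the main term the hypothesis says exactly that $\frac1R\sum_{r=1}^R(V_{N,r}-E[V_{N,r}])\to 0$ in $L^1(P)$ uniformly in $N$ as $R\to\infty$, and since $R(N)\to\infty$ with $R/N\to t/l$, this gives $\frac1N\sum_r V_{N,r}-\frac1N\sum_r E[V_{N,r}]\to 0$ in $L^1(P)$.

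The step I expect to be the main obstacle is the cross term $\frac1N\sum_r\Xi_{N,r}$: crude subadditivity only yields $\|\Xi_{N,r}\|_1=O(l^2)$, which is useless after dividing by $N$ and summing over the $\sim Nt/l$ blocks, so one genuinely needs the martingale structure. Ordering the within-block products by their larger index, $\sum_r\Xi_{N,r}=\sum_m P_{N,m}W_{N,m}$ with $P_{N,m}=\sum_{n<m,\ n\in B_{r(m)}}W_{N,n}$ being $\cG_{N,m-1}$-measurable; this is a martingale transform, so $\big\|\sum_r\Xi_{N,r}\big\|_2^2=\sum_m E[P_{N,m}^2W_{N,m}^2]$, and since $\{W_{N,n}\}$ is a martingale-difference sequence, $E[P_{N,m}^2]\le lK''$ with $K''=\sup\|W_{N,n}\|_2^2$. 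Using the uniform square integrability of \textbf{B2} (and the boundedness of the $U_{N,n}$ available after the truncation employed in the proof of Theorem \ref{MainThm}) one gets the uniform bound $E[P_{N,m}^2W_{N,m}^2]=O(l)$, whence $\big\|\frac1N\sum_r\Xi_{N,r}\big\|_2=O(\sqrt{l/N})\to 0$ for each fixed $l$. Combining everything, for each fixed $l$ one obtains $\limsup_{N\to\infty}\big\|\frac1N\sum_{1\le n\le Nt}W_{N,n}^2-A(t)\big\|_{L^1(P)}\le c(t)\,l^{-1/2}$, and letting $l\to\infty$ yields \textbf{B4}.
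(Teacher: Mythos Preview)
Your blocking strategy and the handling of the correction terms $\Delta_{N,r}$ are essentially the same as the paper's. The decomposition $\sum_{n\in B_r}W_{N,n}^2=\bar W_{N,r}^2-2\Xi_{N,r}$ and $\bar W_{N,r}^2=V_{N,r}+2\bar U_{N,r}\Delta_{N,r}+\Delta_{N,r}^2$ is exactly what the paper does, and your estimates for the $\Delta$-contributions and the application of the hypothesised uniform LLN to $\frac1N\sum_r V_{N,r}$ are correct.

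The genuine gap is in the cross term $\frac1N\sum_r\Xi_{N,r}$. Your $L^2$ martingale-transform argument needs the bound $E[P_{N,m}^2W_{N,m}^2]=O(l)$, but this is a fourth-moment type estimate that does \emph{not} follow from {\bf B2} (uniform square integrability) and {\bf B3} alone; you are forced to invoke ``boundedness of the $U_{N,n}$ after truncation'', which is not part of the lemma's hypotheses and cannot be borrowed from elsewhere. Under the stated assumptions you only know $\|P_{N,m}\|_2^2\le lK''$ and $\|W_{N,m}\|_2^2\le K''$, and there is no way to control $E[P_{N,m}^2W_{N,m}^2]$ from these without higher moments.

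The paper avoids this by working in $L^1$ rather than $L^2$: it first records the elementary fact that if $\{\eta_r\}$ is a \emph{uniformly integrable} martingale-difference sequence then $\frac1R\sum_{r=1}^R\eta_r\to 0$ in $L^1(P)$ (proved by truncating $\eta_r$, re-centering, and using $L^2$ orthogonality for the bounded part). It then observes that for fixed $l$ the within-block cross sums $\Xi_{N,r}=\sum_{n>m,\,n,m\in B_r}W_{N,n}W_{N,m}$ form a martingale-difference array in $r$ adapted to $\cG_{N,rl}$, and are uniformly integrable because each of the $\binom{l}{2}$ products $W_{N,n}W_{N,m}$ is uniformly integrable by Cauchy--Schwarz and the uniform square integrability of $W_{N,n}$. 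The preliminary fact then gives $\frac1N\sum_r\Xi_{N,r}\to 0$ in $L^1(P)$ directly, without any fourth-moment input. Replacing your $L^2$ estimate for $\Xi_{N,r}$ by this $L^1$ argument closes the gap and yields a complete proof under {\bf B1}--{\bf B3} and (\ref{b4=b5}).
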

\begin{pf}
We begin with the observation that if $\eta_n,  n\geq1$ are martingale
differences adapted to any filtration ${\cG}_n$
and they are uniformly integrable, then $\frac{1}{N}\sum_{n=1}^N \eta_n\to0$
in $L_1(P)$. To see this, we approximate $\eta_n$
in $L_1(P)$ by ${\tilde\eta}_n$ that are uniformly bounded. The latter
may not be a martingale difference, but it can be written as
${\tilde
\eta}_n={\hat\eta}_n+ {\bar\eta}_n$ with $\|{\bar\eta}_n\|_{L_1(P)}
\le\| \eta_n- {\tilde \eta}_n\|_{L_1(P)}$ and ${\hat\eta
}_n$ being
a martingale difference with a uniformly bounded second moment.

We will now compare
\[
A_N(t,\omega)=\frac{1}{N}\sum_{n\le[Nt]}
(\eta_n)^2
\]
with block sums over $B_r=\{n\dvtx  rl +1\le n\le(r+1)l\}$,
\[
A^l_N(t,\omega)=\frac{1}{N}\sum
_{r\dvtx  B_r\subset[0,Nt]} \biggl(\sum_{n\in B_r}
\eta_n\biggr)^2.
\]
The difference involves the cross terms
\[
A^l_N(t,\omega)-A_N(t,\omega)=
\frac{2}{N}\sum_{{r: B_r\subset[0,Nt]}}
\mathop{\sum_{n>m}}_{n,m\in B_r} \eta_n\eta_m.
\]
It is easy to see that the sum
\[
\mathop{\sum_{n>m}}_{n,m\in B_r} \eta_n\eta_m
\]
is a martingale difference (in $r$) adapted to $\cG_{rl}$ and, therefore, for fixed
$l$,
\[
\lim_{N\to\infty}\bigl\|A^l_N(t,\omega)-A_N(t,
\omega)\bigr\|_{L_1(P)}=0.
\]
Since $E^P[A_N(t,\omega)]=E^P[A^l_N(t,\omega)]$, it follows immediately
that
\begin{eqnarray*}
&&
\limsup_{N\to\infty}\bigl\|A_N(t,\omega)-E^P
\bigl[A_N(t,\omega)\bigr]\bigr\|_{L_1(P)}
\\
&&\qquad\le\limsup_{N\to\infty}\bigl\|A^l_N(t,
\omega)-E^P\bigl[A_N(t,\omega)\bigr]\bigr\|_{L_1(P)}.
\end{eqnarray*}
On the other hand, $W_{N,n}=U_{N,n}-R_{N,n-1}+R_{N,n}$, where
\[
R_{N,n}=\sum_{m\ge n+1}E[U_{N,m}|\cG_{N,n}]
\]
and
\[
\sum_{ n\in B_r} W_{N,n}=\sum
_{n\in B_r}U_{N,n} -R_{N, jl}+ R_{N,(j+1)l}.
\]
By our assumption, the squares of the block sums $V_{N,r}=(\sum_{n\in B_r}
U_{N,n})^2$ satisfy a uniform law of large numbers in $L_1(P)$. The
differences
between the two block sums come from the correction term and their second
moments are uniformly controlled. Therefore, their contribution is at most
$\frac{C}{l}$. Hence,
\[
\limsup_{l\to\infty} \limsup_{N\to\infty}\bigl\|A^l_N(t,
\omega)-E^P\bigl[A_N(t, \omega)\bigr]\bigr\|_{L_1(P)}=0
\]
and the lemma follows.
\end{pf}
%
\begin{remark}
Let the filtration $\cF_{m,n}$ satisfy any mixing condition, that is,
$\varpi_{p,q}(k)\to0$ as $k\to\infty$.
Then any collection of uniformly integrable random
variables $\{f_n(\omega)\}$, with $f_n$ being $\cF_{n+k,n-k}$
measurable for
some fixed $k$, are easily seen to satisfy the (centered) law of large
numbers. It is obvious
for uniformly bounded $\{f_n\}$ and we can always approximate our $\{
f_n\}$
uniformly in $L_1$ by uniformly bounded ones.
\end{remark}
%
\begin{corollary}
If we have a family of triangular arrays and the conditions of Theorem
\ref{TriCLT} are valid uniformly over the family, then the limit theorem
is also valid uniformly over the family.
\end{corollary}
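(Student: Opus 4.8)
The plan is to deduce the uniform statement from the single‑array statement of Theorem~\ref{TriCLT} by a diagonal extraction, so that no new probabilistic estimate is needed. Fix a metric $d$ metrizing weak convergence of probability measures on the Polish space $D([0,T];\bbR)$ with the Skorokhod topology (for instance the bounded--Lipschitz metric). Let $\{(U^\te_{N,n})_{n\ge 1}\}_{\te\in\Theta}$ be the given family, each array adapted to its own filtration $(\Om_\te,\cG^\te_{N,n},P_\te)$; ``uniformly over the family'' is understood to mean that the uniform square--integrability modulus in {\bf B2}, the summable majorant $c(\cdot)$ in {\bf B3}, and the limiting function $A$ in {\bf B4} may all be chosen independently of $\te$, the convergence in {\bf B4} being uniform in $\te$. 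Put $\xi^\te_N(t)=N^{-1/2}\sum_{1\le n\le Nt}U^\te_{N,n}$ and let $\xi$ be the Gaussian process with independent increments and variance function $A$ furnished by Theorem~\ref{TriCLT}. The goal is to prove that $\sup_{\te\in\Theta}d\big(\cL(\xi^\te_N),\cL(\xi)\big)\to 0$ as $N\to\infty$.

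The argument will be by contradiction. If the claim fails, there are $\ve>0$, integers $N_1<N_2<\cdots\to\infty$, and parameters $\te_j\in\Theta$ with $d\big(\cL(\xi^{\te_j}_{N_j}),\cL(\xi)\big)\ge\ve$ for all $j$. Fixing some $\te_0\in\Theta$, I would assemble one triangular array $\{V_{N,n}\}$ by declaring, for $N=N_j$, that $V_{N,\cdot}=U^{\te_j}_{N,\cdot}$ on the filtration $\cG^{\te_j}_{N,\cdot}$, and, for every other $N$, that $V_{N,\cdot}=U^{\te_0}_{N,\cdot}$ on $\cG^{\te_0}_{N,\cdot}$. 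Every row of $\{V_{N,n}\}$ is a row of some member of the family, so {\bf B1}--{\bf B3} hold for it with the common modulus and the common $c(\cdot)$; and if $W^V_{N,n}$ denotes the associated martingale differences, then for each $N$ they coincide with the $W^\te_{N,n}$ of the relevant $\te$, whence
\[
\Big\|\tfrac1N\textstyle\sum_{1\le n\le Nt}(W^V_{N,n})^2-A(t)\Big\|_{L_1(P)}\ \le\ \sup_{\te\in\Theta}\Big\|\tfrac1N\textstyle\sum_{1\le n\le Nt}(W^\te_{N,n})^2-A(t)\Big\|_{L_1(P)}\ \longrightarrow\ 0
\]
as $N\to\infty$, so {\bf B4} holds for $\{V_{N,n}\}$ with the same $A$. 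This is the one step where the uniformity of {\bf B4}, rather than its validity for each individual $\te$, is essential. Theorem~\ref{TriCLT} applied to $\{V_{N,n}\}$ then yields $N^{-1/2}\sum_{1\le n\le Nt}V_{N,n}\Rightarrow\xi$; restricted to the subsequence $N=N_j$ this says exactly $\xi^{\te_j}_{N_j}\Rightarrow\xi$, i.e. $d\big(\cL(\xi^{\te_j}_{N_j}),\cL(\xi)\big)\to 0$, contradicting the choice of the $\te_j$. If one wishes to allow the limiting functions $A=A_\te$ themselves to vary with $\te$ (assuming only {\bf B2}--{\bf B3} uniform), it suffices, along any candidate bad sequence, to pass first to a further subsequence with $A_{\te_j}\to A_\infty$ pointwise --- possible by Helly's selection theorem, since {\bf B2}--{\bf B3} bound $A_\te(T)$ uniformly in $\te$ --- and then to invoke the continuity of $A\mapsto\cL(\xi_A)$ for these Gaussian processes, so that both $\xi^{\te_j}_{N_j}$ and $\xi^{\te_j}$ converge to $\xi_{A_\infty}$, again a contradiction.

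The only point demanding attention --- and the one I expect to be the main (minor) obstacle --- is checking that the diagonally assembled $\{V_{N,n}\}$ really is a bona fide triangular array to which Theorem~\ref{TriCLT} applies: its filtrations, and even its underlying probability spaces, change from one $N$ to the next, but this is harmless because Theorem~\ref{TriCLT} already allows $\cG_{N,\cdot}$ to depend on $N$, and the transfer of {\bf B4} rests squarely on the uniformity hypothesis displayed above. Beyond this bookkeeping there is no substantive difficulty, the entire content being imported from Theorem~\ref{TriCLT}.
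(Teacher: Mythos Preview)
Your argument is correct and is essentially identical to the paper's: both proceed by contradiction, selecting for each $N$ a bad parameter $\te_N$ and observing that the resulting diagonal array is itself a single triangular array satisfying {\bf B1}--{\bf B4} by the uniformity hypothesis, so Theorem~\ref{TriCLT} applies and yields the contradiction. Your write-up is more detailed (metrizing weak convergence, filling in the non-subsequence rows, handling varying $A_\te$), but the underlying idea is the same one-line diagonal trick.
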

\begin{pf} The proof is a routine argument by contradiction. If the
family is indexed by $\alpha$ and the limit theorem is not valid uniformly,
then for some choice $\alpha_N$ that depends on $N$ the limit theorem
fails to hold. But this is just another triangular array and, by the
uniform validity of the assumptions, the limit theorem has to hold.
\end{pf}
%
\begin{remark}\label{conditional}
For each $N$ let $\cG_{N,n},  n=1,2,\ldots\,$, be a filtration of $\sig$-algebras
and let $k_N\geq1,  N=1,2,\ldots\,$, be an integer sequence with $k_N\to
\infty$
as $N\to\infty$. One way to generate new triangular arrays for
$N=1,2,\ldots\,$,
is to take a sequence of sub
$\sigma$-fields, ${\cG}_{ N,k_N}$, a sequence of sets $B_N\in{\cG}_{N,k_N}$
with $P_N(B_N)\ge\delta>0$ and to consider $(\Omega_N,\tilde{ \cG}_{N,n},
\tilde{U}_{N,n}, P_{N,B_N}),  n=1,2,\ldots\,$, where $\tilde{ \cG}_{N,n}=
{\cG}_{N,k_N+n}$, $\tilde{U}_{N,n}= U_{N,k_N+n}$ and the measure
$P_{N, B_N}$
is defined by
\[
P_{N,B_N}(\Gam)=\frac{P_N(\Gam\cap B_N)}{P_N(B_N)}.
\]
It is easy to see that $\tilde U_n$ are again martingale differences,
for each fixed $\delta>0$ uniform integrability under
$P_{N,B_N}$ is inherited from the same property under $P_N$ and the
condition B3 of Theorem \ref{TriCLT} holds uniformly over this family
as well, provided $k_N\le CN$ for some $C$. Otherwise, it has to be
checked again. The limit $A(t)$ will of course vary
depending on the behavior of $\frac{k_N}{N}$. If $\frac{k_N}{N}\to t_0$,
then $A(t)$ gets replaced by $A(t+t_0)-A(t_0)$.
\end{remark}
This observation leads to the following theorem.
%
\begin{theorem}\label{indep}
Let $\cX$ be a complete separable metric space and for each $N \ge1$
let $F_N(\omega)$ be a $\cX$-valued and ${\cG}_{N,k_N}$-measurable random
variable. Suppose that the distribution
$\lambda_N$ of $F_N$ under $P_N$ converges weakly as $N\to\infty$
to $\lambda$ on $\cX$ and $\frac{k_N}{N}\to t_0$. Let the conditions of
Theorem \ref{TriCLT} hold true and set
\[
\xi_{N,k_N}(t)=\frac{1}{\sqrt N}\sum_{k_N+1\le n\le k_N+Nt}
U_{N,n}.
\]
Then the joint distribution of the pair $(F_N, \xi_{N,k_N}(\cdot))$
converges on $\cX\times D[0,T]$ to the product of $\lambda$ and the
distribution ${\gamma}$ of a
Gaussian process with independent increments having mean $0$
and variance $A(t+t_0)-A(t_0)$. In particular, any limit in
distribution of
\[
\xi_N(t)=\frac{1}{\sqrt N} \sum_{1\le n\le Nt}
U_{N,n}
\]
is always a process with independent increments. We can drop the
assumption that $\frac{k_N}{N}\to t_0$ provided we can verify that
for some $A(t)$,
\[
\lim_{N\to\infty}\biggl\|\frac{1}{ N}\sum_{k_N+1\le n\le k_N+Nt}
W^2_{N,n}- A(t)\biggr\|_{L_1(P_N)}=0.
\]
\end{theorem}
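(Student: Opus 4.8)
The plan is to show that $F_N$ becomes asymptotically independent of the process $\xi_{N,k_N}(\cdot)$ by conditioning on $\cG_{N,k_N}$, and then to read off the ``independent increments'' statement as a special case. First I would observe that the shifted array $\tilde U_{N,n}=U_{N,k_N+n}$ with $\tilde\cG_{N,n}=\cG_{N,k_N+n}$ still satisfies {\bf B1}--{\bf B4} under $P_N$: {\bf B1}--{\bf B3} pass to subcollections verbatim, and the associated martingale differences satisfy $\tilde W_{N,n}=W_{N,k_N+n}$, so {\bf B4} holds for $\tilde U$ with $A(\cdot)$ replaced by $A(\cdot+t_0)-A(t_0)$ -- in the $k_N/N\to t_0$ case this follows from {\bf B4} for the original array (the block $k_N<n\le k_N+Nt$ being handled via the expectation form of {\bf B4} together with continuity of the Gaussian limit), and in the case where the hypothesis $k_N/N\to t_0$ is dropped it is exactly the displayed condition. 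Hence Theorem \ref{TriCLT} already gives $\xi_{N,k_N}(\cdot)\Rightarrow\gam$ under $P_N$, where $\gam$ is the stated Gaussian law; in particular $\{\xi_{N,k_N}(\cdot)\}$ is tight on $D[0,T]$, and since $F_N\Rightarrow\lambda$ the pair $(F_N,\xi_{N,k_N}(\cdot))$ is tight on $\cX\times D[0,T]$.

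The heart of the argument is a \emph{conditional} version of this CLT. For a fixed $\del>0$ and \emph{any} sequence of sets $B_N\in\cG_{N,k_N}$ with $P_N(B_N)\ge\del$, I would check -- this is the place where Remark \ref{conditional} is used -- that the array $(\tilde U_{N,n},\tilde\cG_{N,n})$ under the conditional measure $P_{N,B_N}$ again satisfies {\bf B1}--{\bf B4}, with all constants and the limit function depending only on $\del$, hence uniformly over all such selections: uniform square integrability persists because $dP_{N,B_N}/dP_N\le\del^{-1}$; {\bf B3} persists, with $c(\cdot)$ replaced by $\del^{-3/2}c(\cdot)$, because $B_N\in\cG_{N,k_N}\subseteq\tilde\cG_{N,n-1}$ lets the indicator be pulled out of the relevant conditional expectations (which also shows the $\tilde W_{N,n}$ remain martingale differences under $P_{N,B_N}$); and {\bf B4} transfers from $L_1(P_N)$ to $L_1(P_{N,B_N})$ at the cost of the factor $\del^{-1}$. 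This uniform persistence of {\bf B1}--{\bf B4}, and of {\bf B4} in particular, is where the real work lies; once it is secured, Theorem \ref{TriCLT} gives $\xi_{N,k_N}(\cdot)\Rightarrow\gam$ under $P_{N,B_N}$, i.e.\ $E_{P_{N,B_N}}[\Phi(\xi_{N,k_N})]\to\int\Phi\,d\gam$ for every $\Phi\in C_b(D[0,T])$ and every admissible choice of the $B_N$.

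Next I would bootstrap this to a statement about conditional expectations. Fix $\Phi\in C_b(D[0,T])$, put $c=\int\Phi\,d\gam$ and $\Psi_N=E_{P_N}[\Phi(\xi_{N,k_N})\mid\cG_{N,k_N}]$; the claim is that $\Psi_N\to c$ in $P_N$-probability. Otherwise, along a subsequence there are $\ve,\del>0$ with $P_N(\Psi_N>c+\ve)\ge\del$ (the opposite inequality being symmetric); putting $B_N=\{\Psi_N>c+\ve\}\in\cG_{N,k_N}$ we have $P_N(B_N)\ge\del$ along that subsequence, so the previous step gives $E_{P_{N,B_N}}[\Phi(\xi_{N,k_N})]\to c$, whereas $B_N\in\cG_{N,k_N}$ forces $E_{P_{N,B_N}}[\Phi(\xi_{N,k_N})]=E_{P_{N,B_N}}[\Psi_N]\ge c+\ve$, a contradiction. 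Since $|\Psi_N-c|\le2\|\Phi\|_\infty$, this upgrades to $\Psi_N\to c$ in $L_1(P_N)$. Then for any $g\in C_b(\cX)$, using that $g(F_N)$ is $\cG_{N,k_N}$-measurable,
\[
E_{P_N}\big[g(F_N)\Phi(\xi_{N,k_N})\big]=E_{P_N}\big[g(F_N)(\Psi_N-c)\big]+c\,E_{P_N}\big[g(F_N)\big],
\]
where the first term is at most $\|g\|_\infty E_{P_N}|\Psi_N-c|\to0$ and the second tends to $c\int g\,d\lambda$. Hence $E_{P_N}[g(F_N)\Phi(\xi_{N,k_N})]\to(\int g\,d\lambda)(\int\Phi\,d\gam)$ for all $g\in C_b(\cX),\,\Phi\in C_b(D[0,T])$; by tightness, every subsequential weak limit $\nu$ of $(F_N,\xi_{N,k_N}(\cdot))$ then satisfies $\int g(x)\Phi(y)\,d\nu=(\int g\,d\lambda)(\int\Phi\,d\gam)$, so $\nu=\lambda\otimes\gam$, and the whole sequence converges to $\lambda\otimes\gam$.

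Finally, for the last assertion, let $\xi(\cdot)$ be any subsequential limit in distribution of $\xi_N(\cdot)=\frac1{\sqrt N}\sum_{1\le n\le Nt}U_{N,n}$, fix $s\in[0,T]$, take $k_N=[Ns]$, and let $F_N$ be the restriction of $\xi_N(\cdot)$ to $[0,s]$, an element of $D[0,s]$ which is $\cG_{N,[Ns]}$-measurable and which, along the subsequence, converges in distribution to $\xi(\cdot)|_{[0,s]}$. Since $\xi_{N,[Ns]}(t)$ differs from $\xi_N(s+t)-\xi_N(s)$ by an $N^{-1/2}$-scaled sum of boundedly many uniformly $L_2$-bounded terms, hence by an error tending to $0$ in probability, the part of the theorem already proved shows that $\xi(\cdot)|_{[0,s]}$ is independent of $\{\xi(s+t)-\xi(s):t\in[0,T-s]\}$. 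As $s\in[0,T]$ is arbitrary, a routine induction over finitely many time points shows that $\xi(\cdot)$ has independent increments.
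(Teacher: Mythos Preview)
Your proof is correct and rests on the same idea as the paper's: both arguments invoke Remark \ref{conditional} to see that the shifted array still satisfies {\bf B1}--{\bf B4} under any conditional measure $P_{N,B_N}$ with $B_N\in\cG_{N,k_N}$ and $P_N(B_N)$ bounded below, and then use this conditional CLT to force the limiting joint law to be a product. The only real difference is in how the product structure is extracted. The paper takes a subsequential joint limit $\mu$, fixes a continuity set $E$ of $\lambda$, puts $B_N=F_N^{-1}(E)$, and reads off $\mu(E\times F)=\lambda(E)\gamma(F)$ directly from $P_{N,B_N}\{\xi_{N,k_N}\in F\}\to\gamma(F)$ via Portmanteau. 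You instead bootstrap the conditional CLT to an $L_1$ statement $E_{P_N}[\Phi(\xi_{N,k_N})\mid\cG_{N,k_N}]\to\int\Phi\,d\gamma$ and then test against products $g(F_N)\Phi(\xi_{N,k_N})$. Your route is a little longer but perhaps more transparent, since it makes explicit that the conditional law of $\xi_{N,k_N}(\cdot)$ given $\cG_{N,k_N}$ converges to $\gamma$, which is morally what is going on; the paper's argument is shorter because it only needs one well-chosen $B_N$ per pair $(E,F)$ rather than a contradiction argument over all admissible $B_N$. Your treatment of the independent-increments corollary (taking $F_N=\xi_N|_{[0,s]}$ and $k_N=[Ns]$) spells out what the paper leaves implicit, and is fine; just be aware that the restriction map $\xi\mapsto\xi|_{[0,s]}$ on $D[0,T]$ is only continuous at paths continuous at $s$, so you are implicitly using that the limit $\xi$ has continuous paths.
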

\begin{pf}
Since the conditions of Theorem \ref{TriCLT}
are satisfied here, $\xi_{N,k_N}$
converges in distribution as $N\to\infty$ to a Gaussian process with
independent increments whose distribution we denote by ${\gamma}$.
Now, if
$\mu_N$ denotes the joint distribution of $F_N$ and $\xi_{N,
k_N}(\cdot)$, the convergence of the marginals implies the tightness
of $\mu_N$. Taking a subsequence if necessary, we can assume that
$\mu_N$ has a limit $\mu$ with marginals $\lambda$ and $\gamma$. We need
to prove that $\mu=\lambda\times\gamma$.
It is enough to prove that if $E\subset\cX$ and $F\subset D[0,T]$
are continuity sets of $\lambda$ and $\gamma$, respectively, then
$\mu(E\times F)=\lambda(E)\times\gamma(F)$.
We can assume without loss of generality that $\lambda(E)>0$. Set
$B_N=\{\omega\dvtx  F_N(\omega)\in E\}$, then $P_N(B_N)\to
\lambda(E)$, and so $P_N(B_N)\ge\frac{1}{2}\lambda(E)> 0$ for $N$ large
enough. In view of Remark \ref{conditional}, $\xi_{N, k_N}(\cdot)$ converges
in distribution under $P_{N,B_N}$ as $N\to\infty$ to a Gaussian process
with independent increments and since, clearly, under $P_{N,B_N}$ we have
convergence in B4 to the same $\tilde A(t)=A(t+t_0)-A(t_0)$
as under $P_N$, it follows that the distribution of $\xi_{N,
k_N}(\cdot)$
under $P_{N,B_N}$ converges to $\gamma$. In particular, since $F$
is a continuity set,
\[
P_{N,B_N}\bigl\{ \omega\dvtx  \xi_{N,k_N}(\cdot)\in F\bigr\}=
\frac{\mu_N(E\times F)} {
P_N(B_N)}\to\gamma(F).
\]
Since $E\times F$ is a continuity set of $\mu$, this proves that
$\frac{\mu(E\times F)}{\lambda(E)}=\gamma(F)$.
\end{pf}
%
\begin{corollary}\label{vectorCLT}
Assume that we have a triangular array consisting of
${\cG}_{N,n}$-measurable random vectors $U_{N,n}\dvtx \Omega\to R^d$ and
that each linear combination $\langle\lambda, U_{N,n}\rangle$ satisfies
the assumptions \textup{B1--B4}. In particular,
\[
\lim_{N\to\infty} \biggl\|\biggl[\frac{1}{N}\sum
_{1\le n\le Nt} \langle \lambda, W_{N,n}\rangle^2
\biggr]-\bigl\langle\lambda, A(t)\lambda\bigr\rangle\biggr\|_{L_1(P)}=0.
\]
Then
\[
\xi_N(t)=\frac{1}{\sqrt N}\sum_{k_N+1\le n\le k_N+Nt}
U_{N, n}
\]
converges in distribution on the Skorokhod space $D[[0,T]; R^d]$
to the Gaussian process $\eta(t)$ with independent increments taking
values in $R^d$, having mean $0$ and covariance
\[
E\bigl[\bigl\langle\lambda\bigl(\eta(t)-\eta(s)\bigr)\bigr\rangle^2
\bigr]=\bigl\langle\lambda, \bigl(A(t)-A(s)\bigr) \lambda\bigr\rangle.
\]
\end{corollary}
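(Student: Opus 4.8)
The strategy is the Cram\'er--Wold device for the finite-dimensional distributions, combined with coordinatewise tightness. Fix $0\le t_1<\cdots<t_r\le T$. By Cram\'er--Wold, $(\xi_N(t_1),\dots,\xi_N(t_r))$ converges in $\bbR^{dr}$ precisely when the scalar sums $\sum_{j=1}^r\langle\mu_j,\xi_N(t_j)\rangle$ converge in distribution for every $(\mu_1,\dots,\mu_r)\in(\bbR^d)^r$. Interchanging the two summations and putting $\nu_i=\mu_i+\mu_{i+1}+\cdots+\mu_r$ and $t_0=0$, one finds
\[
\sum_{j=1}^r\langle\mu_j,\xi_N(t_j)\rangle=\frac1{\sqrt N}\sum_{k_N<n\le k_N+Nt_r}\langle\nu_{i(n)},U_{N,n}\rangle=:\frac1{\sqrt N}\sum_n\tilde U_{N,n},
\]
where $i(n)$ is the index of the block $(t_{i-1},t_i]$ containing $(n-k_N)/N$, so that the coefficient vector $\nu_{i(n)}$ is constant on each block. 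The plan is to verify conditions {\bf B1}--{\bf B4} for the scalar triangular array $\{\tilde U_{N,n}\}$ and then to apply Theorem~\ref{TriCLT}; the offset by $k_N$ is handled exactly as in Theorem~\ref{indep} and Remark~\ref{conditional}, so one may take $k_N=0$.

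Conditions {\bf B1} and {\bf B2} pass immediately to $\{\tilde U_{N,n}\}$ since only the finitely many fixed vectors $\nu_i$ are involved, and {\bf B3} follows by applying the hypothesis with $\lambda$ running over a basis, which yields a summable $c(\cdot)$ with $\|E[U_{N,m}\,|\,\cG_{N,n}]\|_2\le c(m-n)$ coordinatewise, hence $\|E[\tilde U_{N,m}\,|\,\cG_{N,n}]\|_2\le(\max_i|\nu_i|)c(m-n)$. Verifying {\bf B4} is the substantial step. Let $W^{\mathrm{vec}}_{N,n}$ be the $\bbR^d$-valued analogue of $W_{N,n}$ formed coordinatewise from $U_{N,n}$, so that $\langle\nu,W^{\mathrm{vec}}_{N,n}\rangle$ is the $W$-transform of the scalar array $\langle\nu,U_{N,\cdot}\rangle$ for any fixed $\nu$. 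A short computation shows that the martingale difference $\tilde W_{N,n}$ attached to $\tilde U_{N,n}$ differs from $\langle\nu_{i(n)},W^{\mathrm{vec}}_{N,n}\rangle$ only through the contribution of the later blocks to the tail corrections $\sum_{m>n}E[\,\cdot\,|\cG_{N,n}]$; by {\bf B3} this discrepancy has $L_2$-norm at most $C\sum_{k>d(n)}c(k)$, where $d(n)$ is the distance from $n$ to the right endpoint of its block, hence it is uniformly bounded and tends to $0$ uniformly in $N,n$ except for the $O(rL)$ indices lying within distance $L$ of a block boundary. Using $|a^2-b^2|\le|a-b|(|a|+|b|)$, the Cauchy--Schwarz inequality and the uniform $L_2$-bound on $W^{\mathrm{vec}}_{N,n}$ coming from {\bf B2} and {\bf B3}, one concludes that $\frac1N\sum_{1\le n\le Ns}\tilde W^2_{N,n}$ and $\frac1N\sum_{1\le n\le Ns}\langle\nu_{i(n)},W^{\mathrm{vec}}_{N,n}\rangle^2$ have the same $L_1$-limit; the latter, summed block by block and using the hypothesis $\frac1N\sum_{1\le n\le Nt}\langle\lambda,W_{N,n}\rangle^2\to\langle\lambda,A(t)\lambda\rangle$ with $\lambda=\nu_i$ on each block, converges to an increasing function $\tilde A(s)$ whose increment over $(t_{i-1},t_i]\cap[0,s]$ equals $\langle\nu_i,(A(\min(s,t_i))-A(t_{i-1}))\nu_i\rangle$. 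This is {\bf B4} in the relaxed form (\ref{b4=b5}), so Theorem~\ref{TriCLT}, in the strengthened form where {\bf B4} is replaced by (\ref{b4=b5}), applies.

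Evaluating the resulting scalar limit theorem at $s=t_r$ shows that $\sum_j\langle\mu_j,\xi_N(t_j)\rangle$ converges to a centered Gaussian of variance $\tilde A(t_r)=\sum_{i=1}^r\langle\nu_i,(A(t_i)-A(t_{i-1}))\nu_i\rangle$, which is exactly $\mathrm{Var}\big(\sum_j\langle\mu_j,\eta(t_j)\rangle\big)$ for a Gaussian process $\eta$ with independent increments and $E[\langle\lambda,\eta(t)-\eta(s)\rangle^2]=\langle\lambda,(A(t)-A(s))\lambda\rangle$; by Cram\'er--Wold the finite-dimensional distributions of $\xi_N$ converge to those of such an $\eta$. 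For tightness, each coordinate process $\langle e_l,\xi_N(\cdot)\rangle$ satisfies {\bf B1}--{\bf B4} with $\lambda=e_l$, hence converges in $D([0,T];\bbR)$ and is in particular tight by Theorem~\ref{TriCLT}; since the Skorokhod modulus of an $\bbR^d$-valued path is dominated by the maximum of the moduli of its coordinates, $\{\xi_N\}$ is tight in $D([0,T];\bbR^d)$, and as the scalar limits are Brownian-type processes with continuous paths the limit $\eta$ has continuous paths, so finite-dimensional convergence together with tightness gives $\xi_N\Rightarrow\eta$ in $D([0,T];\bbR^d)$ with the asserted covariance. The hard part is the verification of {\bf B4} for the concatenated array $\{\tilde U_{N,n}\}$, namely the bookkeeping of the tail corrections $\sum_{m>n}E[\,\cdot\,|\cG_{N,n}]$ across block boundaries; everything else is the routine Cram\'er--Wold and Skorokhod-tightness machinery.
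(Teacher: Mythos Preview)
Your argument is correct but takes a genuinely different route from the paper. The paper proceeds as follows: for each fixed $\lambda$, Theorem~\ref{TriCLT} applied to the scalar array $\langle\lambda,U_{N,n}\rangle$ gives convergence of $\langle\lambda,\xi_N(\cdot)\rangle$ in $D[0,T]$, which already yields tightness of the vector process; one then passes to a subsequential limit $Q$ with limiting process $\eta$, applies Cram\'er--Wold only to a \emph{single} increment $\eta(t)-\eta(s)$ to see it is $d$-dimensional Gaussian with covariance $A(t)-A(s)$, and finally invokes Theorem~\ref{indep} to conclude that $\langle u,\eta(t)-\eta(s)\rangle$ is independent of $\{\eta(\tau):\tau\le s\}$ for every $u$, so $\eta$ has independent increments and $Q$ is uniquely determined. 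Your approach instead runs Cram\'er--Wold on the full finite-dimensional vector $(\xi_N(t_1),\dots,\xi_N(t_r))$, which forces you to build a scalar triangular array with block-wise varying coefficients $\nu_{i(n)}$ and to re-verify {\bf B1}--{\bf B4} for it, the substantive work being the bookkeeping of tail corrections across block boundaries. This is more laborious but is self-contained and bypasses Theorem~\ref{indep} entirely; the paper's route is shorter precisely because independence of increments comes for free from Theorem~\ref{indep}, whereas you recover it implicitly through the additive variance formula $\tilde A(t_r)=\sum_i\langle\nu_i,(A(t_i)-A(t_{i-1}))\nu_i\rangle$. One small slip: what you actually establish for the concatenated array is the full condition {\bf B4} (convergence in $L_1$), not merely~(\ref{b4=b5}), so you can apply Theorem~\ref{TriCLT} directly without appealing to any strengthened form.
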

\begin{pf}
By the results for the scalar case, the distribution of $\langle u,
\xi_N(t)\rangle$ converges to a Gaussian process with independent
increments. This implies compactness of the distributions of the
vector process $\xi_N(\cdot)$. Let $Q$ be a limit point of distributions
of $\xi_N$ and let $\eta$ be the corresponding limiting vector process.
By the above for each constant vector $u$, the distribution of the increments
$\langle u, \eta(t)-\eta(s)\rangle$ must be Gaussian and, therefore,
by the
Cram\'er--Wold argument, $\eta(t)-\eta(s)$ has under $Q$ the
$d$-dimensional Gaussian distribution with mean $0$ and a covariance
matrix $\{A_{i,j}(t)-A_{i,j}(s)\}$. Moreover, by Theorem \ref{indep},
under $Q$ the random variable $\langle u,\eta(t)-\eta(s)\rangle$ is
independent of $\{\eta(\tau)\dvtx \tau\le s\}$ for every $t>s$ and $u\in R^d$.
This is sufficient to determine $Q$ as the distribution of a
Gaussian process $\eta(t)$ with independent increments taking
values in $R^d$ having mean $0$ and covariance
\[
E\bigl[\bigl(\eta_i(t)-\eta_i(s)\bigr) \bigl(
\eta_j(t)-\eta_j(s)\bigr)\bigr]=A_{i,j}(t)-A_{i,j}(s)
\]
and to establish that the distribution of
\[
\xi_N(t)=\frac{1}{\sqrt N}\sum_{k_N+1 \le n\le k_N+Nt}
U_{N,n}
\]
converges to $Q$ on the Skorokhod space $D[[0,T]; R^d]$.
\end{pf}

Next, we break the proof of Theorem \ref{MainThm} into several steps and
use the following representations:
%
\begin{eqnarray}
\label{repr}\qquad Y_{i,q_i(n)}&=&Y_{i,q_i(n),1}+\sum
_{r=1}^\infty[Y_{i,q_i(n), 2^r}- Y_{i,q_i(n),2^{r-1}}],
\nonumber\\
\zeta_{i,N,0}(t)&=&\frac{1}{\sqrt N}\sum_{1\le n\le M_i(Nt)}
Y_{i,q_i(n),1},
\nonumber\\[-8pt]\\[-8pt]
\zeta_{i,N,r}(t)&=&\frac{1}{\sqrt N}\sum_{1\le n\le M_i(Nt)}[Y_{i,q_i(n),2^r}-
Y_{i,q_i(n),2^{r-1}}],\qquad r\ge1,\quad\mbox{and}
\nonumber
\\
\xi_{i,N}(t)&=&\sum_{r=1}^\infty
\zeta_{i,N,r}(t),
\nonumber
\end{eqnarray}
where $M_i(u)=u$ if $i\geq k+1$ and $M_i(u)=u/i$ for $i=1,\ldots,i$.
First, we establish the following.
%
\begin{proposition} \label{subThm1}
For each fixed $u$, as $N$ goes to $\infty$, the partial sums
\[
\xi^u_{i,N}(t)=\sum_{r=1}^u
\zeta_{i,N,r}(t)=\sum_{1\le n\le M_i(Nt)}
Y_{i,q_i(n),2^u}
\]
form a tight family of processes on the Skorokhod space $D[[0,t];R^k]$.
All the
limit points are Gaussian processes with independent increments. The second
moments are uniformly integrable so that the covariance of the limiting
Gaussian process can be identified as the limit of the covariances of the
corresponding approximating processes along the subsequence.
\end{proposition}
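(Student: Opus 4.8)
The plan is to deduce the statement from the martingale invariance principle of Corollary~\ref{vectorCLT}, Theorem~\ref{indep} and the Lemma following Theorem~\ref{TriCLT}, applied to a realigned triangular array equipped with a ``future becomes present'' filtration that defeats the dependence of the $n$-th summand on the far future. Fix $u$ and put $r=2^u$. For $1\le i\le k$ one has $q_i(n)=in$, so the substitution $m=in$ turns $\xi^u_{i,N}(t)=\frac1{\sqrt N}\sum_{1\le n\le Nt/i}Y_{i,q_i(n),r}$ into $\frac1{\sqrt N}\sum_{1\le m\le Nt}Y_{i,m,r}$ (recall $Y_{i,m,r}=0$ unless $i\mid m$); hence $\xi^u_{1,N},\dots,\xi^u_{k,N}$ are the coordinates of the partial sums of the single $\bbR^k$-valued array $U_{N,m}=(Y_{1,m,r},\dots,Y_{k,m,r})$, $m\ge 1$, which we endow with the filtration $\cG_{N,m}=\cF_{0,m+r}$. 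It then suffices to verify conditions \textbf{B1}--\textbf{B3} for every linear combination $\langle\lambda,U_{N,m}\rangle$ and to control the relevant second moments. The processes $\xi^u_{i,N}$ with $i>k$, if they are meant to be included here, are treated in the same spirit, the role of the realignment being played by the internal and mutual spreading of the rigid blocks furnished by (\ref{2q.1})--(\ref{2q.3}); I indicate the difference at the end.

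Condition \textbf{B1} is immediate: for $i\mid m$ the variable $Y_{i,m,r}=F_{i,m,r}(X_r(m/i),\dots,X_r(m-m/i),\omega)$ is $\cF_{m/i-r,\,m+r}$-measurable, hence $\cG_{N,m}$-measurable. For \textbf{B2} one invokes the moment part of Assumption~\ref{Hyp2}: by Lemma~\ref{L2.1}(i) each $F_i$ inherits the growth bound (\ref{2.4b}) from $F$, so that $|F_{i,m,r}(x,\omega)|\le C\,E[(1+|x|^\iota+|X(m)|^\iota)\mid\cF_{m-r,m+r}]$, and Lemma~\ref{L5.0} together with $\gamma_{2q\iota}<\infty$ and the inequality in (\ref{eqsix.5}) (which forces $m>2(\iota+2)$) yields $\sup_{N,m,r}\|Y_{i,m,r}\|_{2+\ve_0}<\infty$ for some $\ve_0>0$; in particular the squares are uniformly integrable. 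The same bound, inserted into the one-sided estimate already used to prove (\ref{varbound}), also gives $\sup_N\|\xi^u_{i,N}(t)\|_{2+\ve_0}<\infty$, which is needed below.

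The crux is \textbf{B3}: for $m'>m$ one must bound $\|E[Y_{i,m',r}\mid\cF_{0,m+r}]\|_2$ by a summable $c(m'-m)$. The key observation is that the conditional expectation of $Y_{i,m',r}$ given any $\sigma$-algebra that contains all the first $i-1$ arguments $X_r(jm'/i)$, $j<i$, but lies below level $m'-r$, vanishes to first order: writing $f(x,\omega)=F_{i,m',r}(x,\omega)$, which is $\cF_{m'-r,m'+r}$-measurable in $\omega$, one has $E[f(x,\cdot)]=\int F_i(x,x_i)\,d\mu(x_i)\equiv 0$ by (\ref{F0}). If $(i-1)m'/i\le m$ --- equivalently $m'\le\frac{i}{i-1}m$ --- then all lower arguments are already $\cF_{0,m+r}$-measurable, and Corollary~\ref{cor}(i) (applicable with $a=2$ by (\ref{eqsix.5}), and with $g\equiv 0$) gives $\|E[Y_{i,m',r}\mid\cF_{0,m+r}]\|_2\le c\,\varpi_{q,p}(m'-m-2r)(1+\gamma_m^{\iota+1})$. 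Otherwise one first conditions on $\cF_{0,s}$ with $s=(i-1)m'/i+r$ (legitimate because then $\cF_{0,m+r}\subset\cF_{0,s}$, while $s<m'-r$ once $m'>2ir$), applies Corollary~\ref{cor}(i) there, and contracts back to $\cF_{0,m+r}$ by conditional Jensen, getting the same estimate with $\varpi_{q,p}(m'/i-2r)$ instead of $\varpi_{q,p}(m'-m-2r)$. A short check of the two positional cases shows that both bounds are dominated by $c(k):=c\,(1+\gamma_m^{\iota+1})\,\varpi_{q,p}(\lfloor k/i\rfloor-2r)$ with $k=m'-m$ (a crude constant bound covering the finitely many small $k$), and $\sum_{k\ge 1}\varpi_{q,p}(\lfloor k/i\rfloor-2r)\le i\,\theta(p,q)+C<\infty$ by (\ref{eqsix.1}), since $i\le k$ is fixed. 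In contrast to Lemma~\ref{lem5.2} no $\beta$-term appears, because we work directly with the already-truncated $Y_{i,m',r}$. For $i>k$ one takes $\cG_{N,n}=\cF_{0,q_i(n)+r}$ and argues analogously, using (\ref{2q.1})--(\ref{2q.3}) to put the last argument $q_i(m')$ and the whole rigid block of term $m'$ far above the conditioning level and invoking Corollary~\ref{cor}(iii), the arguments being split into those below and those above that level.

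With \textbf{B1}--\textbf{B3} in hand, the associated martingale differences $W_{N,m}$ have uniformly integrable squares (the corrections $R_{N,m}=\sum_{m'>m}E[U_{N,m'}\mid\cG_{N,m}]$ satisfy $\sup_{N,m}\|R_{N,m}\|_2\le\sum_kc(k)<\infty$, so $\frac1{\sqrt N}R_{N,[Nt]}\to 0$ in $L^2$ uniformly in $t$ and $\xi^u_{i,N}(t)=\frac1{\sqrt N}\sum_{m\le Nt}(W_{N,m})_i+o(1)$). Tightness on $D[[0,T];\bbR^k]$ and the Gaussian independent-increment structure of every subsequential limit then follow from Corollary~\ref{vectorCLT} and Theorem~\ref{indep} in the version that does not presuppose convergence of the quadratic variations: along a subsequence one extracts a limit $A(t)$ of $\frac1N\sum_{m\le Nt}E[\langle\lambda,W_{N,m}\rangle^2]$, the law of large numbers for block sums (cf. the Lemma following Theorem~\ref{TriCLT} and the Remark after it, the $W_{N,m}$ being measurable over bounded windows up to the summable tails $E[U_{N,m'}\mid\cG_{N,m}]$) upgrades this to \textbf{B4} along that subsequence, and Theorem~\ref{indep} yields independence of the increments of the limit. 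Finally, since $\sup_N\|\xi^u_{i,N}(t)\|_{2+\ve_0}<\infty$, the products $\xi^u_{i,N}(s)\xi^u_{j,N}(t)$ are uniformly integrable, so the covariances converge along the subsequence and identify the covariance of the limiting Gaussian process. I expect the main obstacle to be precisely the combinatorics of \textbf{B3}: keeping the conditional-expectation bound summable \emph{and uniform in $m$} --- so that the conditional constructions behind Corollary~\ref{vectorCLT} and Theorem~\ref{indep} are admissible --- while interposing the correct $\sigma$-algebra in each positional case and handling, for $i>k$, the case in which the conditioning level can fall strictly between two consecutive arguments of a later term.
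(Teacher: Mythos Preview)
Your proposal is correct and follows essentially the same route as the paper: verify \textbf{B1}--\textbf{B3} for the triangular array $Y_{i,q_i(n),r}$ with the ``future becomes present'' filtration $\cF_{-\infty,q_i(n)+r}$ (or your $\cF_{0,m+r}$), handle \textbf{B3} by the same two positional cases (lower arguments already below the conditioning level, or not) using Corollary~\ref{cor} together with (\ref{F0}) to make $g\equiv 0$, and then invoke the martingale CLT along subsequences since \textbf{B4} need not hold globally. Your explicit realignment $m=in$ packaging all $i\le k$ into one $\bbR^k$-valued array with a common filtration is a clean way to feed Corollary~\ref{vectorCLT} directly, and your $L^{2+\ve_0}$ bound makes the uniform integrability claim at the end more transparent; but these are presentational refinements of the same argument.
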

\begin{pf} We note that $Y_{i,q_i(n),r}$
is $\cF_{-\infty,q_i(n)+r}$ measurable. In order to apply Theorem
\ref{TriCLT}
with $\cG_{N,n}=\cF_{-\infty, q_i(n)+r}$, we need to verify the conditions
B1--B4. With such choice of $\cG_{N,n}$, B1 is clearly
fulfilled. To verify the uniform square integrability
of $\{Y_{i,q_i(n),r}\}$, we observe that the uniform square
integrability of
any family $\{Z_\alpha\}$ implies the uniform integrability of
$\{ E[Z_\alpha|\cG]\}$ as $\alpha$ and $\cG$ vary. The distribution
of $\{X(n)\}$ is the same for all $n$ and, therefore, by our moment
condition, $|X(n)|^{2\iota}$ are uniformly integrable. Using the bound
$|F|\le C(1+\sum|x_i|^\iota)$, it is easily seen that $\{
Y_{i,q_i(n),r}\}$
are uniformly square integrable. To control
$\|E[Y_{i,q_i(n),r}|\cF_{-\infty, l}]\|_{2}$, we use Corollary \ref{cor}(ii)
for $q_{i-1}(n)+r\le l$, which yields the estimate
\[
\bigl\|E[Y_{i,q_i(n),r}|\cF_{-\infty, l}]\bigr\|_2\le c(d,p,\ka,\iota)c(
\gamma_m,\gamma_{ q\iota}) \varpi_{q,p}
\bigl(q_i(n)-r-l\bigr)
\]
provided $q_i(n)\ge l+r$. On the other hand, if $q_{i-1}(n)+r\ge l$, we can
write
\begin{eqnarray*}
\bigl\|E[Y_{i,q_i(n),r}|\cF_{-\infty, l}]\bigr\|_2&\le&
\bigl\|E[Y_{i,q_i(n),r}|\cF_{-\infty,
q_{i-1}(n)+r}]\bigr\|_2
\\
&\le& c(d,p,\ka,\iota) c(\gamma_m,\gamma_{ q\iota})
\varpi_{q,p}\bigl(q_i(n)- q_{i-1}(n)-2r\bigr)
\\
&\le& c(d,p,\ka,\iota) c(\gamma_m,\gamma_{ q\iota})
\varpi_{q,p}(n-2r),
\end{eqnarray*}
whenever $n\geq2r$ and $n\ge n^*=n^*(i)=\min\{ m\dvtx
q_i(l)-q_{i-1}(l)\geq l
$ $\forall l\geq m\}$, observing that $n^*<\infty$ by (\ref{2q3}).
Assuming that $q\ge p$, we can always bound $\varpi_{p,q}$ by $1$.
Therefore, choosing $c(n)=1$ for small values of $n$
(there are at most $n^*+2r$ of them) and estimating $c(n)$ by either
$c(d,p,\ka,\iota) c(\gamma_m,\break \gamma_{ q\iota}) \varpi_{q,p}(q_i(n)-r-l)$ or
by $c(d,p,\ka,\iota) c(\gamma_m,\gamma_{ q\iota}) \varpi_{q,p}(n-2r)$,
we arrive at B3 with the estimate
\[
\sum_{n=0}^\infty c(k) \le\Biggl[n^*+2r+2
\sum_{n=1}^\infty\varpi_{p,q}(n)
\Biggr] c(d,p,\ka,\iota) c(\gamma_m,\gamma_{ q\iota}).
\]
If we set
\[
R_{i, m,r}=\sum_{n\ge m-r}E[Y_{i,n,r}|
\cF_{-\infty,m}],
\]
then it follows from the above estimates that
%
\begin{equation}
\label{Rest} \sup_{i,l}\|R_{i,l,r}\|_2\le2
\bigl(n^*+r+\theta(p,q)\bigr)c(d,p,\ka,\iota) c(\gamma_m,
\gamma_{ q\iota}),
\end{equation}
where $\theta(p,q)$ is given by (\ref{eqsix1}).
It is now clear that $W_{i,n,r}= Y_{i,n-r,r}+R_{i,n+1,r}-R_{i,n,r}$ is
a martingale difference and is uniformly square integrable. While
B4 may not hold, the limit will exist along suitable subsequences.
The uniform bound on $\|W_{i,n,r}\|_2 $ ensures that limits $A(t)$ will
be Lipschitz continuous functions of $t$ and the convergence is uniform
in $t$.
\end{pf}

In order to obtain convergence of processes $\xi_{i,N}$ and not only their
approximations $\xi_{i,N,r}$, we will need uniform bounds in the
representations (\ref{repr}).
%
\begin{proposition} \label{subThm2} The differences $\{\zeta_{i,N,r}(t)\}$
satisfy
%
\begin{equation}
\label{zetaest} \sum_r \sup_{N\ge1}
\max_{1\le i\le\ell} \Bigl\|\sup_{0\le t\le
T}\bigl|\zeta_{i,N,r}(t)\bigr|
\Bigr\|_2\le C<\infty.
\end{equation}
\end{proposition}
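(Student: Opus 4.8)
The plan is to prove, for each fixed $i\le\ell$ and each $r\ge 1$, the uniform-in-$N$ bound
\[
\big\|\sup_{0\le t\le T}|\zeta_{i,N,r}(t)|\big\|_2\le c_T\,\big(2^r+\te(p,q)\big)\,[\beta(q,2^{r-1})]^\delta ,
\]
(and to treat the single index $r=0$ separately, where it is just a uniformly bounded quantity), and then to sum over $r$. The first ingredient is a pointwise estimate on $D_{i,n,r}:=Y_{i,q_i(n),2^r}-Y_{i,q_i(n),2^{r-1}}$. Passing from resolution $2^{r-1}$ to $2^r$ only perturbs the arguments $X_{2^{r-1}}(q_j(n))\mapsto X_{2^r}(q_j(n))$ (with $\|X_{2^r}(m)-X_{2^{r-1}}(m)\|_q\le 2\beta(q,2^{r-1})$) and the inner conditional expectation $E[\,\cdot\,|\cF_{q_i(n)-2^{r-1},q_i(n)+2^{r-1}}]\mapsto E[\,\cdot\,|\cF_{q_i(n)-2^{r},q_i(n)+2^{r}}]$; invoking Corollary \ref{cor}(iv) together with the a.s. H\"older and growth bounds of Theorem \ref{kol} and the $L^q$-variant of Lemma \ref{L5.0}, one gets $\sup_{i,n,N}\big(\|D_{i,n,r}\|_2+\|D_{i,n,r}\|_q\big)\le c\,[\beta(q,2^{r-1})]^\delta$.

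Next I would run the martingale approximation of the proof of Proposition \ref{subThm1}, now for the difference process $\sqrt N\,\zeta_{i,N,r}(t)=\sum_{n\le M_i(Nt)}D_{i,n,r}$, using the filtration $\cG_n=\cF_{-\infty,q_i(n)+2^r}$, with respect to which every $D_{i,n,r}$ is adapted. Setting $R_{i,m,r}=\sum_{n>m}E[D_{i,n,r}|\cG_m]$ (the series converging by the estimates below, exactly as in Theorem \ref{TriCLT}), one has $D_{i,n,r}=W_{i,n,r}+R_{i,n-1,r}-R_{i,n,r}$ with $\{W_{i,n,r}\}_n$ a $\cG$-martingale difference sequence, whence
\[
\sup_{0\le t\le T}|\zeta_{i,N,r}(t)|\le\tfrac1{\sqrt N}\Big(\sup_{M\le M_i(NT)}\big|\sum_{n\le M}W_{i,n,r}\big|+|R_{i,0,r}|+\sup_{M\le M_i(NT)}|R_{i,M,r}|\Big).
\]
Doob's $L^2$-inequality on the martingale part, the crude bound $\|\sup_{M\le K}|R_{i,M,r}|\|_2\le\sqrt K\,\sup_m\|R_{i,m,r}\|_2$ on the remainder, and $\|W_{i,n,r}\|_2\le\|D_{i,n,r}\|_2+2\sup_m\|R_{i,m,r}\|_2$ together give, uniformly in $N$,
\[
\big\|\sup_{0\le t\le T}|\zeta_{i,N,r}(t)|\big\|_2\le c\sqrt T\Big(\sup_{n}\|D_{i,n,r}\|_2+\sup_{m}\|R_{i,m,r}\|_2\Big),
\]
so everything is reduced to bounding $\sup_{m,N}\|R_{i,m,r}\|_2$ by $c\,(2^r+\te(p,q))[\beta(q,2^{r-1})]^\delta$.

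This last estimate is the heart of the matter, and its delicate point is the main obstacle. Split $R_{i,m,r}=\sum_{n>m}E[D_{i,n,r}|\cG_m]$ by the size of the mixing gap $g=g(n,m,r)$ between $\cG_m$ and the ``active window'' of $D_{i,n,r}$ (which sits around the points $q_1(n),\dots,q_i(n)$). For the $O(2^r)$ indices $n$ whose window comes within $2^{r+1}$ of $q_i(m)$ — there are only $O(2^r)$ such $n$ because $q_i(n)-q_{i-1}(n)\ge n$ for $i\le k$ and by (\ref{2q.1})–(\ref{2q.3}) for $i>k$ — one uses only the pointwise bound $\|E[D_{i,n,r}|\cG_m]\|_2\le\|D_{i,n,r}\|_2\le c[\beta(q,2^{r-1})]^\delta$, contributing $O(2^r[\beta(q,2^{r-1})]^\delta)$. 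For every other $n$ the gap $g$ is positive and tends to $\infty$, and one must still extract the factor $[\beta(q,2^{r-1})]^\delta$: here one uses that, by the centering (\ref{F0}), after freezing the (already $\cG_m$-measurable) first $i-1$ arguments, $E[D_{i,n,r}|\cG_m]$ is the conditional expectation over a far $\sigma$-algebra of a quantity $H_{2^r}(x,\cdot)-H_{2^{r-1}}(x,\cdot)$ with $H_\rho(x,\omega)=E[F_i(x,\dots,X(q_i(n)))|\cF_{q_i(n)-\rho,q_i(n)+\rho}]$, where $E[H_\rho(x,\cdot)]=0$ for every fixed $x$ and where both the uniform norm and the H\"older seminorm of $H_{2^r}(\cdot,\omega)-H_{2^{r-1}}(\cdot,\omega)$ are $O([\beta(q,2^{r-1})]^\delta)$. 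Corollary \ref{cor}(i)–(ii) applied to this mean-zero quantity then yields the \emph{product} bound $\|E[D_{i,n,r}|\cG_m]\|_2\le c\,\varpi_{q,p}(g)\,[\beta(q,2^{r-1})]^\delta$, and summing the essentially geometric (in $g$) series gives the $\te(p,q)[\beta(q,2^{r-1})]^\delta$ contribution. The obstacle is precisely that the lazy triangle-inequality estimate bounds each far term only by $\min([\beta(q,2^{r-1})]^\delta,\varpi_{q,p}(g))$, whose sum over $n$ contains the $r$-independent constant $\te(p,q)$ and hence is \emph{not} summable in $r$; avoiding this forces the simultaneous use of mixing decay and of the $O(\beta)$ control of the resolution increment in both norm and H\"older seminorm.

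Finally, summing over $r$ is routine: $\sum_{r\ge1}(2^r+\te(p,q))[\beta(q,2^{r-1})]^\delta\le 2\sum_{s\ge0}2^s[\beta(q,2^s)]^\delta+\te(p,q)\sum_{s\ge0}[\beta(q,2^s)]^\delta$, and since $\beta(q,\cdot)$ is non-increasing one has $2^s[\beta(q,2^s)]^\delta\le 2\sum_{2^{s-1}\le l<2^s}[\beta(q,l)]^\delta$ and $[\beta(q,2^s)]^\delta\le[\beta(q,l)]^\delta$ for $2^{s-1}\le l<2^s$, so both series are dominated by $\sum_{l\ge1}[\beta(q,l)]^\delta<\infty$ from (\ref{beta1}). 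Taking the maximum over the finitely many $i\le\ell$ and adding the uniformly bounded $r=0$ term yields (\ref{zetaest}).
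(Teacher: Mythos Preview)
Your proposal is correct and follows essentially the same route as the paper: bound the resolution increment $\|\tilde Y_{i,n,r}\|_2=\|D_{i,n,r}\|_2$ by $c[\beta(q,2^{r-1})]^\delta$ via Corollary \ref{cor}(iv), run the martingale approximation with Doob's inequality, estimate the remainder $R$ by splitting into $O(2^r)$ near terms (contraction) and far terms (the product bound $\varpi\cdot\beta^\delta$ from Corollary \ref{cor}(iv), summing to $\te(p,q)\beta^\delta$), and then sum $2^r[\beta(q,2^{r-1})]^\delta$ over $r$ using monotonicity of $\beta$. Your explicit identification of the obstacle --- that the lazy bound $\min(\beta^\delta,\varpi)$ would leave a non-summable $\te(p,q)$ tail, forcing the simultaneous use of the mixing and the $\beta$-increment --- is exactly the point.

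One small imprecision: in the far-term analysis you write ``after freezing the (already $\cG_m$-measurable) first $i-1$ arguments''. For $n>m$ these arguments $X_{2^r}(q_j(n))$, $j<i$, need not lie below $q_i(m)+2^r$ (e.g.\ $i\le k$, $jn>im$), so they are not literally $\cG_m$-measurable. The paper is equally terse here, simply invoking Corollary \ref{cor}(iv). The fix is routine: either replace the non-measurable arguments by their $\cG_m$-conditional expectations first (at a further cost absorbed into the $\beta^\delta$ or $\varpi$ factors, as in Corollary \ref{cor}(ii)--(iii)), or note that Corollary \ref{cor}(iv) already delivers the product bound without requiring $X,Y$ to be $\cG$-measurable, since the statement controls $\tilde f_1(X)-\tilde f_2(Y)$ for arbitrary $X,Y$ and the passage from $E[f(X,\cdot)|\cG]$ to $\tilde f(X,\cdot)$ costs only another $\beta^\delta$ term. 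This does not affect the structure or the final estimate.
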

\begin{pf} Set $\tilde Y_{i,n,r}=Y_{i,n,2^r}-Y_{i,n,2^{r-1}},  r\geq1$
and
\[
\tilde R_{i,n,r}=\sum_{m\geq n+1}E(\tilde
Y_{i,m,r}|\cF_{-\infty,n+2^r}).
\]
Estimating conditional expectations here by Corollary \ref{cor}(iv)
when $m-n\geq
2^{r+1}$ and by the contraction argument when $n+1\leq m\leq
n+2^{r+1}$, and
applying Corollary \ref{cor}(iv) after that again, we obtain
%
\begin{eqnarray}
\label{zeta-1} \|\tilde R_{i,n,r}\|_2&\leq&2^{r+1}
\sup_n\|\tilde Y_{i,n,r}\|_2+\tilde C \bigl(
\bigl(\be\bigl(q,2^r\bigr)\bigr)^\del+\bigl(\be
\bigl(q,2^{r-1}\bigr)\bigr)^\del \bigr)
\nonumber\\[-8pt]\\[-8pt]
&\leq&\hat C2^r \bigl(\bigl(\be\bigl(q,2^r\bigr)
\bigr)^\del+\bigl(\be\bigl(q,2^{r-1}\bigr)
\bigr)^\del \bigr),
\nonumber
\end{eqnarray}
where $\tilde C,\hat C>0$ do not depend on $i,n,r$. Now observe that
%
\begin{equation}
\label{zeta-2}\qquad \zeta_{i,N,r}=\frac1{\sqrt N}\sum
_{1\leq m\leq M_i(NT)}Z_{i,q_i(m),r} -\frac1{\sqrt N} (\tilde
R_{i,q_i([M_i(NT)]),r}-\tilde R_{i,0,r} ),
\end{equation}
where $Z_{i,n,r}=\tilde Y_{i,n,r}+\tilde R_{i,n,r}-\tilde R_{i,n-1,r},
n\geq1$ is a martingale differences sequence with respect to the filtration
$\{\cG_n,  n\geq1\}$ with $\cG_n=\cF_{-\infty,n+2^r}$. By the Doob
inequality for martingales,
%
\begin{eqnarray}
\label{zeta-3}
\frac1NE\sup_{0\leq t\leq T} \biggl\vert\sum
_{1\leq l\leq Nt}Z_{i,q_i(l),r} \biggr\vert^2&\leq&\frac4N\sum
_{1\leq l\leq NT}EZ^2_{i,q_i(l),r}
\nonumber\\
&\leq&4T\max_{1\leq l\leq NT}EZ^2_{i,q_i(l),r}\\
&\leq&12T \Bigl(
\sup_n \|\tilde Y_{i,n,r}\|_2+2\sup_n
\|\tilde R_{i,n,r}\|_2 \Bigr).
\nonumber
\end{eqnarray}
We can estimate also
%
\begin{eqnarray}
\label{zeta-4}
\frac1NE\max_{0\leq l\leq NT}|\tilde R_{i,q_i(l),r}-\tilde
R_{i,0,r}|^2
&\leq&\frac4N\sum_{1\leq l\leq NT}E\tilde R^2_{i,q_i(l),r}
\nonumber\\[-8pt]\\[-8pt]
&\leq&4\max_{0\leq l
\leq NT}R\tilde R^2_{i,q_i(l),r}.
\nonumber
\end{eqnarray}
Now collecting (\ref{zeta-1})--(\ref{zeta-4}) and applying Corollary
\ref{cor}(iv) again to (\ref{zeta-3}) and (\ref{zeta-4}), we obtain that
%
\begin{equation}
\label{zeta-5} \sup_{N\geq1}\Bigl\|\sup_{0\leq t\leq T}\bigl|\zeta_{i,N,r}(t)\bigr|
\Bigr\|_2\leq \hspace*{1.5pt}\tilde{\hspace*{-1.5pt}\tilde C} 2^r \bigl(\bigl(\be
\bigl(q,2^r\bigr)\bigr)^\del+\bigl(\be\bigl(q,2^{r-1}
\bigr)\bigr)^\del \bigr),
\end{equation}
where $\hspace*{1.5pt}\tilde{\hspace*{-1.5pt}\tilde C}>0$ does not depend on $r$. Since
$\sum_{r\geq1}(\be(q,r))^\del$ converges by our assumption (\ref{beta1}),
then $\sum_{r\geq1}2^r(\be(q,2^r))^\del$ converges as well, and so the
right-hand side of (\ref{zeta-5}) is summable, implying (\ref{zetaest}).
\end{pf}

Next, we deal specifically with the terms $Y_{i,q_i(n)}$,
$ k+1\le i\le\ell$ which satisfy (\ref{2q1}), (\ref{2q2}) and
(\ref{2q3}).
By Propositions \ref{subThm1} and \ref{subThm2}, any possible limit
$\eta_i(t)$ in distribution of
\[
\xi_{i,N}(t)=\frac{1}{\sqrt N} \sum_{n\le Nt}
Y_{i,q_i(n)}
\]
for $1\le i\le\ell$ will be a Gaussian process with independent increments.
The processes $\{\eta_i(\cdot),  k+1\le i\le\ell\}$ will be mutually
independent as well as totally independent of $\{\eta_i(\cdot),
1\le i\le k\}$, which is proved by successive application of Theorem
\ref{indep}. We note that it is enough to show that for any $T<\infty$
we can ignore $\sum_{n\leq k_N(i)} Y_{i,q_i(n)}$ in the definition of
$\xi_{i,N}(t)$, where $k_N(i)=\max\{ n\dvtx   q_i(n)\le q_{i-1}(NT)\}$ so that
Theorem \ref{indep} will be applicable then to the approximations
\[
\xi_{i,N,r}(t)=\frac{1}{\sqrt N} \sum_{k_N(i)+1\leq n\le Nt}
Y_{i,q_i(n),r}
\]
with $Y_{i,q_i(n),r}$ defined at the beginning of Section \ref{sec4}.
At the end, relying on Proposition \ref{subThm2}, we can let $r\to
\infty$ and complete the proof. From (\ref{2q3}), for any $\epsilon>0$,
$q_i(N\epsilon)\ge q_{i-1}(NT)$ for large $N$, which implies that the
initial terms are at most $N\epsilon$ in number. Since $\epsilon$ is
arbitrary, we see that $N^{-1}k_N(i)\to0$ as $N\to\infty$. By
(\ref{varbound}) of Lemma \ref{lem52}, we obtain that the contribution
of initial $k_N(i)$ terms in the sum for $\xi_{i,N}$ is negligible.
Similarly, we conclude that it does not matter whether we take the sum
for $\xi_{i,N,r}(t)$ above until $Nt$ or until $Nt+k_N(i)$ as in
Theorem~\ref{indep}. By Proposition \ref{covariance2}, we have also
that the limiting variance $A_{i,i}(t)$ of each $\xi_{i,N}(t),  i>k$,
exists and is given by (\ref{cov,ii}).

We observe that independency of processes $\eta_i,
i>k$, of each other and of $\eta_i,  i\leq k$, can be proved in an
alternative way without using Theorem \ref{indep}. Namely, we can rely on
Theorem \ref{TriCLT} showing that linear combinations of processes
$\xi_{i,N,r}$ converge to Gaussian processes, deriving similarly to
the above
via uniform estimates of Proposition \ref{subThm2} that linear combination
of processes $\eta_i$ are Gaussian and concluding the proof via the vanishing
covariances assertion (\ref{cov,ij}) of Proposition~\ref{covariance2}.

Now, we are able to complete the proof of Theorem \ref{MainThm}. First,
we conclude from Propositions \ref{subThm1} and \ref{subThm2}
together with
Corollary \ref{vectorCLT} that the $k$-dimensional process $\{\xi_{i,N}(t)\dvtx
  1\leq i\leq k\}$ converges in distribution as $N\to\infty$ to a Gaussian
process $\{\eta_i(t)\dvtx   1\leq i\leq k\}$ with stationary independent
increments whose covariances are given by Proposition \ref
{covariance}. As
explained above, when $i\geq k+1$, the process $\xi_{i,N}(t)$
converges in
distribution to a Gaussian process $\eta_i(t)$ with stationary independent
increments and $\eta_{k+1}(t),\ldots,\eta_\ell(t)$ are both mutually independent
and independent of processes $\eta_1(t),\ldots,\eta_k(t)$. It follows
that the
$\ell$-dimensional process $\{\xi_{i,N}(t)\dvtx   1\leq i\leq\ell\}$ converges
in distribution as $N\to\infty$ to the Gaussian process $\{\eta_i(t)\dvtx
1\leq i\leq\ell\}$ with stationary independent increments whose covariances
are given by Propositions \ref{covariance} and \ref{covariance2}
taking into account independency of processes $\eta_i(t)$ with $i\geq k+1$
of other processes $\eta_j(t)$ with $j\ne i$.

It remains to show that the process $\xi_N(t)$ given by (\ref{xi}) converges
in distribution as $N\to\infty$ to a Gaussian process $\xi(t)$ given by
(\ref{eqmain1}). The convergence itself is clear since each $\xi_{i,N}$
converges to the corresponding $\eta_i$. In order to show that $\xi$ is
a Gaussian process, it suffices to prove the same for $\zeta(t)=\sum_{i=1}^k
\eta_i(it)$ since $\tilde\zeta(t)=\sum_{i=k+1}^\ell\eta_i(t)$ is
a Gaussian
process (as a sum of independent Gaussian processes) independent of
$\zeta$,
and so $\zeta(t)+\tilde\zeta(t)$ is a Gaussian process if $\zeta
(t)$ is.
Since $(\eta_1(t),\ldots,\eta_k(t))$ is a $k$-dimensional Gaussian process
with independent increments, then the vector increments
$ (\eta_j(it)-\eta_j((i-1)t),  j=1,2,\ldots,k )$ for
$i=1,2,\ldots,k$ are
mutually independent $k$-dimensional Gaussian processes, and so
\[
\zeta_\la(t)=\sum_{i=1}^k
\sum_{j=1}^k\la_{ij} \bigl(
\eta_j(it)-\eta_j\bigl((i-1)t\bigr) \bigr)=\sum
_{j=1}^k\sum_{i=1}^k
\la_{ij} \bigl(\eta_j(it)-\eta_j
\bigl((i-1)t\bigr) \bigr)
\]
is a Gaussian process for any choice of constants $\la_{ij}$ and we recall
that $\eta_j(0)=\xi_{j,N}(0)=0$. Now observe that choosing $\la_{ij}=1$ if
$i\leq j$ and $\la_{ij}=0$ otherwise, we obtain that $\zeta_\la
(t)=\zeta(t)$,
completing the proof.

As to our claim that increments of $\xi(t)$ may not be independent if
$k\geq2$,
consider, for instance, the case $k=\ell=2$ and
\[
\xi(t)-\xi(t/2)=\eta_1(t)+\eta_2(2t)-
\eta_1(t/2)-\eta_2(t)   \quad\mbox{and}\quad   \xi(t/2)=
\eta_1(t/2)+\eta_2(t).
\]
Then by Proposition \ref{covariance},
\[
E \bigl(\xi(t/2) \bigl(\xi(t)-\xi(t/2) \bigr)\bigr)=D_{2,1}t/2,
\]
where
\[
D_{2,1}=\frac12\sum^\infty_{u=-\infty}a_{2,1}(u)
\]
and
\[
a_{2,1}(u)=\int F_2(x,y)F_1(z)\,d\mu(x)\,d
\mu_u(y,z).
\]
Assume, for instance, that $X(0),X(1),X(2),\ldots$ is a sequence of independent
identically distributed random variables, then $\mu_u=\mu\times\mu$
if $u\ne0$,
and so $a_{2,1}(u)=0$ if $u\ne0$, while
\[
a_{2,1}(0)=\int F_2(x,y)F_1(y)\,d\mu(x)\,d\mu(y).
\]
Now suppose that $EX(0)=0$, $EX^2(0)=1$ and choose $F(x,y)=x^2y^2-1$. Then
$\int F(x,y)\,d\mu(x)\,d\mu(y)=0$, $F_2(x,y)=x^2(y^2-1)$, $F_1(x)=x^2-1$,
and so
\[
D_{2,1}=\frac12 a_{2,1}(0)=\int\bigl(y^2-1
\bigr)^2\,d\mu(y)\ne 0
\]
unless $X^2(0)=1$ with probability one.

\section{Continuous time case}\label{sec7}

First, we represent again the function $F$ in the form (\ref{F}) and
$\xi_N(t)$ given by (\ref{cont}) in the form (\ref{xi}) where now
%
\begin{equation}
\label{81} \xi_{i,N}(t)=\frac1{\sqrt N}\int_0^{S_i(Nt)}F_i
\bigl(X\bigl(q_1(s)\bigr),\ldots,X\bigl(q_i(s)\bigr)
\bigr)\,ds
\end{equation}
with $S_i(u)=u/i$ if $i\leq k$ and $S_i(u)=u$ if $i\geq k+1$.
Set
\begin{eqnarray*}
F_{i,r,t}&=&F_{i,r,t}(x_1,\ldots,x_{i-1},{
\omega})=E \bigl(F_i\bigl(x_1,\ldots,x_{i-1},X(t)\bigr)| \cF_{t-r,t+r}
\bigr),
\\
X_r(t)&=&E\bigl(X(t)|\cF_{t-r,t+r}\bigr),\\
Y_i(t)&=&F_i \bigl(X\bigl(q_1(s)\bigr),\ldots,X
\bigl(q_i(s)\bigr) \bigr)   \qquad\mbox{if }   t=q_i(s)
\end{eqnarray*}
and
\begin{eqnarray*}
Y_i(t)&=&0  \qquad\mbox{if }   t\ne q_i(s)
\qquad \mbox{for any }   s,\\
Y_{i,r}(t)&=&F_{i,r,t}
\bigl(X_r\bigl(q_1(s)\bigr),\ldots,X_r
\bigl(q_i(s)\bigr) \bigr)\qquad\mbox{if } t=q_i(s)
\end{eqnarray*}
and
\begin{eqnarray*}
Y_{i,r}(t)=0\qquad\mbox{if } t\ne q_i(s)  \qquad\mbox{for any } s.
\end{eqnarray*}

In order to use fully our discrete time technique, it will be convenient
to pass from $\xi_{i,N}$ to $\tilde\xi_{i,N}$ given by
\[
\tilde\xi_{i,N}(t)=\frac1{\sqrt N}\sum_{n=0}^{[S_i(Nt)]}I_i(n),
\]
where $I_i(n)=\int_n^{n+1}Y_i(q_i(s))\,ds$. The error of such transition is
estimated by
%
\begin{equation}
\label{82} \sup_{0\leq t\leq T}\bigl|\xi_{i,N}(t)-\tilde
\xi_{i,N}(t)\bigr|\leq\frac 1{\sqrt N} \max_{0\leq n\leq NT}Q_i(n),
\end{equation}
where $Q_i(n)=\int_0^1|Y_i(q_i(n+s))|\,ds$. Now for any $\del>0$,
\begin{eqnarray*}
P\Bigl\{\max_{0\leq n\leq NT}Q_i(n)>\ve\sqrt N\Bigr\}&\leq& NT
\max_{0\leq n\leq NT}P\bigl\{ Q_i(n)>\ve\sqrt N\bigr\}
\\
&\leq&\frac T{\ve^2}\max_{0\leq n\leq NT}\int_{\{ Q_i(n)>\ve\sqrt
N\}}Q^2_i(n)
\,dP
\\
&\leq&(\ve\sqrt N)^{-\del}\int Q^{2+\del}_i(n)\,dP\\
&\leq&(\ve
\sqrt N)^{-\del} \int_0^1
EY^{2+\del}_i\bigl(q_i(n+s)\bigr)\,ds\\
&\leq& C(\ve\sqrt
N)^{-\del}.
\end{eqnarray*}
Thus, the left-hand side of (\ref{82}) tends to 0 in probability as
$N\to
\infty$, and so it suffices to prove our functional central limit theorem
for $\tilde\xi_{i,N}$ in place of $\xi_{i,N}$.

Introduce the approximations $\tilde\xi_{i,N,r}$ of $\tilde\xi_{i,N}$ by
%
\begin{equation}
\label{83} \tilde\xi_{i,N,r}(t)=\frac1{\sqrt N}\sum
_{n=0}^{[S_i(Nt)]}I_{i,r}(n),
\end{equation}
where $I_{i,r}(n)=\int_n^{n+1}Y_{i,r}(q_i(s))\,ds$. Now set
\[
R_{i,r}(m)=\sum_{l=m+1}^\infty E
\bigl(I_{i,r}(l)|\cF_{-\infty,m+r} \bigr)
\]
and $Z_{i,r}(m)=I_{i,r}(m)+R_{i,r}(m)-R_{i,r}(m-1)$. Then
$E(Z_{i,r}(m)|\cF_{-\infty,m-1+r})=0$, and so $\{ Z_m, \cG_m\}_{m\geq0}$
with $Z_m=Z_{i,r}(m)$ and $\cG_m=\cF_{-\infty,m+r}$ turns out to be a
martingale differences sequence.
We saw already above that $\{ Q_i^2(n)\}$ is uniformly integrable. Then
both $\{ I^2_i(n)\}$ and $\{ I^2_{i,r}(n)\}$ are uniformly integrable and,
like in the proof of Proposition \ref{subThm1}, we conclude that both
$\{ R^2_{i,r}(n)\}$ and $\{ Z^2_{i,r}(n)\}$ are uniformly integrable as
well. Set
\[
\zeta_{i,N,r}(t)=\frac1{\sqrt N}\sum^{[ S_i(Nt)]}_{n=0}Z_{i,r}(n).
\]
Then, similar to Section \ref{sec5}, we obtain that
%
\begin{equation}
\label{84} \sup_{0\leq t\leq T}\bigl|\tilde\xi_{i,N,r}(t)-
\zeta_{i,N,r}(t)\bigr|\to0 \qquad\mbox{in probability as }   N\to\infty
\end{equation}
and so in order to obtain a central limit theorem for $\tilde\xi_{i,r,N}(t)$,
it suffices to prove it for the normalized martingal $\zeta_{i,r,N}(t)$.

In order to invoke martingale limit theorems,we have to study next the
asymptotical behavior as $N\to\infty$ of normalized variances
$E ( \zeta_{i,r,N}(S_i(Nt)) )^2$. As in the discrete time case
considered in Section \ref{sec4}, in view of (\ref{F}) and (\ref{81}),
it suffices to study the asymptotical behavior of
%
\begin{eqnarray}
\label{85} D_{i,j}(N,s,t)&=&E\bigl[\xi_{i,N}(s)
\xi_{j,N}(t)\bigr]
\nonumber\\[-8pt]\\[-8pt]
&=&\frac1N\int_0^{S_j(Nt)} \int_0^{S_i(Ns)}E
\bigl[Y_i\bigl(q_i(u)\bigr)Y_j
\bigl(q_j(v)\bigr)\bigr]\,du\,dv.
\nonumber
\end{eqnarray}
We treat first the case when $1\leq i,j\leq k$ similarly to Proposition
\ref{covariance}. Let ${\upsilon}$ be the greatest common divisor of
$i$ and $j$,
then, similarly to the argument in Lemma \ref{Lb2}, we obtain that for any
integer $w$,
%
\begin{equation}
\label{86} \lim_{u,v\to\infty,  iu-jv=w{\upsilon
}}E\bigl[Y_i(iu)Y_j(jv)
\bigr]=a_{i,j}(w,2w,\ldots,{\upsilon}w)
\end{equation}
with $a_{i,j}$ defined in Proposition \ref{covariance}. Now, changing
variables, we have
%
\begin{eqnarray}
\label{87} &&\frac1N\int_0^{Nt/j}\int
_0^{Ns/i}E\bigl[Y_i(iu)Y_j(jv)
\bigr]\,du\,dv
\nonumber\\[-8pt]\\[-8pt]
&&\qquad=\frac{\upsilon}{Ni} \int_0^{Nt/j}\int
_{-jv/{\upsilon}}^{(Ns-jv)/{\upsilon}}E\biggl[Y_i \biggl(
\frac{jv+w{\upsilon}}i \biggr) Y_j(jv)\biggr]\,dw\,dv.
\nonumber
\end{eqnarray}
When $v$ is large, then the expectation under the integral equals approximately
$a_{i,j}(w,2w,\ldots,{\upsilon}w)$ and taking into account that the
latter is absolutely
integrable in $w$ from $-\infty$ to $\infty$, we can approximate the interior
integral in $w$ by the integral $\int_{-\infty}^\infty$. Next we
integrate in
$v$ within constraints $0\leq v\leq Nt/j$ and $u=(jv+w{\upsilon
})/i\leq Ns/i$, that is,
asymptotically for $N$ large $0\leq v\leq\frac Nj\min(s,t)$. It
follows that
the expression in (\ref{87}) is approximately equal as $N\to\infty$ to
%
\begin{equation}
\label{88} \frac{{\upsilon}}{ij}\min(s,t)\int_{-\infty}^\infty
a_{i,j}(w,2w,\ldots,{\upsilon}w)\,dw
\end{equation}
and we obtain the same covariances as in the discrete time case.

Next, we claim that for each $i=k+1,\ldots,\ell$ and $t>0$,
%
\begin{equation}
\label{89} \lim_{N\to\infty}D_{i,i}(N,t,t)=0.
\end{equation}
Indeed, set again $b_{i,j}(u,v)=E ( Y_i(q_i(u))Y_j(q_j(v) )$. Then
%
\begin{eqnarray}
\label{810}
\frac1N\int_0^{Nt}\int
_0^{Nt}\bigl|b_{i,i}(u,v)\bigr|\,du\,dv&\leq&\frac2N
\int_0^{Nt}du \int_u^{u+{\gamma}}\bigl|b_{i,i}(u,v)\bigr|\,du\,dv
\nonumber\\
&&{}+\frac2N\int_0^{N{\gamma}}du\int
_{u+{\gamma}}^{Nt}\bigl|b_{i,i}(u,v)\bigr|\,du\,dv\nonumber\\[-8pt]\\[-8pt]
&&{}+ \frac2N\int
_{N{\gamma}}^{Nt}du\int_{u+{\gamma
}}^{Nt}\bigl|b_{i,i}(u,v)\bigr|\,du\,dv
\nonumber
\\
&\leq& C\bigl(t{\gamma}+{\gamma}+t\be^{(i)}_{\gamma}(N{\gamma })
\bigr)
\nonumber
\end{eqnarray}
for some $C>0$ independent of $t,  N$ and ${\gamma}$, where we obtain by
(\ref{new2q}) and estimates similar to Lemma \ref{lem52} and Proposition
\ref{covariance2} that for any $i>k$ and ${\gamma}>0$,
%
\begin{equation}
\label{811}\qquad \be_{\gamma}^{(i)}(M)=\sup_{u\geq M}\int
_{u+{\gamma}}^\infty \bigl|b_{i,i}(u,v)\bigr|\,dv<\infty
\quad\mbox{and}\quad  \lim_{M\to\infty}\be_{\gamma}^{(i)}(M)=0.
\end{equation}
So, letting first $N\to\infty$ and then ${\gamma}\to0$, we obtain
(\ref{89}).
%
\begin{remark}\label{rem81}
In fact, in the continuous time case we can take $q_i(t)=\al_it$ for arbitrary
$0<\al_1<\al_2<\cdots<\al_k$ in place of $1<2<\cdots<k$ while leaving
$q_i(t),  i=k+1,\ldots,\ell$ as before. In this situation (\ref{86}) becomes
\[
\lim_{u,v\to\infty,  \al_iu-\al_jv=z}E\bigl[Y_i(\al_iu)Y_j(
\al_jv)\bigr]= a_{i,j}(\rho_1z,
\rho_2z,\ldots,\rho_{n_{ij}}z,z),
\]
where $\rho_1<\rho_2<\cdots<\rho_{n_{ij}}<1$ and $\al_i\rho_l,\al_j\rho_l\in
\{\al_1,\ldots,\al_k\}$ for $l=1,\ldots,n_{ij}$. Then the covariances (\ref{88})
will have the form
\[
\frac{1}{\al_i\al_j}\min(s,t)\int_{-\infty}^\infty
a_{i,j}(\rho_1w,\rho_2w,\ldots,
\rho_{n_{ij}}w,w)\,dw.
\]
\end{remark}




\printaddresses

\end{document}